\def\ignorelegendentry#1{}
\pgfplotsset{
    every axis label/.append style = {font = \scriptsize},
    every tick label/.append style = {font = \scriptsize},
    ignore legend/.style={
        every axis legend/.code={\let\addlegendentry\ignorelegendentry}
    },
    log x ticks with fixed point/.style={
        xticklabel={
            \pgfkeys{/pgf/fpu=true}
            \pgfmathparse{exp(\tick)}%
            \pgfmathprintnumber[fixed relative, precision=3]{\pgfmathresult}
            \pgfkeys{/pgf/fpu=false}
        },
    },
    log y ticks with fixed point/.style={
        yticklabel={
            \pgfkeys{/pgf/fpu=true}
            \pgfmathparse{exp(\tick)}%
            \pgfmathprintnumber[fixed relative, precision=3]{\pgfmathresult}
            \pgfkeys{/pgf/fpu=false}
        },
    },
}
\newcommand{\term}{\emph}
\newcommand{\field}[1]{\mathbb{#1}}
\newcommand{\N}{\mathbb{N}}
\newcommand{\R}{\field{R}}
\newcommand{\extR}{\overline \R}
\newcommand{\norm}[1]{\|#1\|}
\newcommand{\adaptnorm}[1]{\left\|#1\right\|}
\newcommand{\abs}[1]{|#1|}
\newcommand{\adaptabs}[1]{\left|#1\right|}
\newcommand{\inv}[1]{#1^{-1}}
\newcommand{\grad}{\nabla}
\newcommand{\freevar}{\,\boldsymbol\cdot\,}
\newcommand{\Union}\bigcup
\newcommand{\Isect}\bigcap
\newcommand{\union}\cup
\newcommand{\isect}\cap
\newcommand{\bigunion}\bigcup
\newcommand{\bigisect}\bigcap
\newcommand{\defeq}{:=}
\newcommand{\downto}{\searrow}
\newcommand{\upto}{\nearrow}
\newcommand{\subdiff}{\partial}
\DeclareMathOperator*{\argmin}{arg\,min}
\DeclareMathOperator{\diag}{diag}
\def \uminusSym{\setbox0=\hbox{$\cup$}\rlap{\hbox 
        to\wd0{\hss\raise0.5ex\hbox{$\scriptscriptstyle{-}$}\hss}}\box0}
\newcommand{\iprod}[2]{\langle #1,#2\rangle}
\newcommand{\Bigiprod}[2]{\Bigl\langle #1,#2\Bigr\rangle}
\def\llangle{\langle\kern-3pt\langle}
\def\rrangle{\rangle\kern-3pt\rangle}
\def \weaktostarSym{\setbox0=\hbox{$\rightharpoonup$}\rlap{\hbox 
        to\wd0{\hss\raise1ex\hbox{$\scriptscriptstyle{*\,}$}\hss}}\box0}
\def\linear{\mathbb{L}}
\def\extR{\overline \R}
\def\opt#1{\bar #1}
\def\realopt#1{\hat #1}
\def\this#1{#1^k}
\def\nexxt#1{#1^{k+1}}
\def\overnext#1{\bar #1^{k+1}}
\def\optu{{\opt{u}}}
\def\optx{{\opt{x}}}
\def\opty{{\opt{y}}}
\def\realoptu{{\realopt{u}}}
\def\realoptx{{\realopt{x}}}
\def\realopty{{\realopt{y}}}
\def\nextx{\nexxt{x}}
\def\nexty{\nexxt{y}}
\def\thisu{\this{u}}
\def\thisx{\this{x}}
\def\thisy{\this{y}}
\def\overnextu{\overnext{u}}
\def\tauTest{\phi}
\def\sigmaTest{\psi}
\def\GenGap{\mathcal{G}}
\newcommand{\Precond}{M}
\def\prev#1{#1^{k-1}}
\def\prevu{\prev{u}}
\DeclareMathOperator{\prox}{prox}
\def\d{\,d}
\newcommand{\fakenorm}[1]{\llbracket #1 \rrbracket}
\def\infconv{\mathop{\Box}}
\let\phi=\varphi
\let\epsilon=\varepsilon
\def\ProductSpace{U}
\def\PpredictConstr{\mathcal{X}}
\def\DpredictConstr{\mathcal{Y}}
\def\PDpredictConstr{\mathcal{U}}
\def\Id{\mathop{\mathrm{Id}}}
\DeclareMathOperator{\dynregret}{dynreg}
\def\EITmeas{\mathscr{I}}
\def\dif{\,\text{d}}
\def\BoundMeas{s}
\def\Lmeas{\xi}
\def\conductivity{x}
\def\Hs{\mathcal{H}}
\newcommand{\EkGrowthMono}[1][k]{{\hat\gamma}_{E,#1}}
\newcommand{\EkLossMono}[1][k]{{\hat\lambda}_{E,#1}}
\newcommand{\EkLossGlobal}[1][k]{{\bar\lambda}_{E,#1}}
\newcommand{\EkGrowthGlobal}[1][k]{{\bar\gamma}_{E,#1}}
\newcommand{\EkLoss}[1][k]{\lambda_{E,#1}}
\newcommand{\EkGrowth}[1][k]{\gamma_{E,#1}}
\def\Err{e}
\def\ErrMono{\hat e}
\def\ErrGlobal{\bar e}
\def\GrowthOpMono{\hat\Gamma}
\def\LossOpMono{\hat\Omega}
\def\GrowthOp{\Gamma}
\def\LossOp{\Omega}
\def\GrowthOpGlobal{\bar\Gamma}
\def\LossOpGlobal{\bar\Omega}
\def\criticalprox{\hat r}
\def\primalpredict{\breve x}
\def\dualpredict{\breve y}
\def\pdpredict{\breve u}
\def\YoungCoeffIII{\kappa_k}
\def\SolOp{S}
\def\ResOp{R}
\def\WOp{\Sigma^{-1/2}}
\def\snabla{\grad_\xi}
\def\ifempty#1{\def\temp{#1}\ifx\temp\empty}
\newcommand{\CS}[1]{w_{#1}}%CEM solution
\newcommand{\dCS}[2]{w'_{#1,\ifempty{#2}\freevar\else#2\fi}}
\newcommand{\ddCS}[3]{w''_{#1,\ifempty{#2}\freevar\else#2\fi,\ifempty{#3}\freevar\else#3\fi}}
\newcommand{\CSf}[1]{v_{#1}}%CEM solution, function part
\newcommand{\dCSf}[2]{v'_{#1,\ifempty{#2}\freevar\else#2\fi}}
\newcommand{\ddCSf}[3]{v''_{#1,\ifempty{#2}\freevar\else#2\fi,\ifempty{#3}\freevar\else#3\fi}}
\newcommand{\CSb}[1]{{V_{#1}}}%CEM solution, boundary part
\def\Perturbr{\Delta}
\def\EkLipCoeffCoeff{\kappa_k}
\def\YoungCoeff{\xi}
\def\pNormCoeff{\theta_k}
\def\deltaCoeff{\theta}
\def\pxrealoptxbound{\delta}
\def\xrealoptxbound{\delta}
\def\xoptxbound{\delta}
\def\pxoptxbound{\delta}
\def\estgrad{\widetilde{\grad E}}
\renewrobustcmd{\downto}{{{\mathchoice%
            {\rotatebox[origin=c]{-20}{$\to$}}% display
            {\rotatebox[origin=c]{-20}{$\to$}}% text
            {\rotatebox[origin=c]{-20}{\scalebox{0.75}{$\to$}}}% subscript
            {\rotatebox[origin=c]{-20}{\scalebox{0.6}{$\to$}}}% subsubscript
}}}
\renewrobustcmd{\upto}{{{\mathchoice%
            {\rotatebox[origin=c]{20}{$\to$}}% display
            {\rotatebox[origin=c]{20}{$\to$}}% text
            {\rotatebox[origin=c]{20}{\scalebox{0.75}{$\to$}}}% subscript
            {\rotatebox[origin=c]{20}{\scalebox{0.6}{$\to$}}}% subsubscript
}}}
\newcommand{\imagearray}[1]{
    %\resizebox{0.5\textwidth}{!}{
        \hspace{1.55cm} True \hspace{1.5cm} No prediction\hspace{0.95cm} Primal only \hspace{1.3cm} Greedy \hspace{1.2cm} Affine\raggedright
            \\
            \centering
            \rotatebox{90}{\hspace{0.9cm}Frame 1}\hspace{0.15cm}\includegraphics[width=0.19\linewidth, trim={25pt 25pt 25pt 25pt},clip]{img/#1/true/conductivity_1}%
            \includegraphics[width=0.19\linewidth, trim={25pt 25pt 25pt 25pt},clip]{img/#1/NP_s1.0_t0.85/reco_conductivity_1}%
            \includegraphics[width=0.19\linewidth, trim={25pt 25pt 25pt 25pt},clip]{img/#1/PP_s1.0_t0.85/reco_conductivity_1}%
            \includegraphics[width=0.19\linewidth, trim={25pt 25pt 25pt 25pt},clip]{img/#1/PDP1_s1.0_t0.85/reco_conductivity_1}%
            \includegraphics[width=0.19\linewidth, trim={25pt 25pt 25pt 25pt},clip]{img/#1/PDP2_s1.0_t0.85/reco_conductivity_1}%
            \\%
            \rotatebox{90}{\hspace{0.7cm}Frame 80}\hspace{0.15cm}\includegraphics[width=0.19\linewidth, trim={25pt 25pt 25pt 25pt},clip]{img/#1/true/conductivity_2}%
            \includegraphics[width=0.19\linewidth, trim={25pt 25pt 25pt 25pt},clip]{img/#1/NP_s1.0_t0.85/reco_conductivity_2}%
            \includegraphics[width=0.19\linewidth, trim={25pt 25pt 25pt 25pt},clip]{img/#1/PP_s1.0_t0.85/reco_conductivity_2}%
            \includegraphics[width=0.19\linewidth, trim={25pt 25pt 25pt 25pt},clip]{img/#1/PDP1_s1.0_t0.85/reco_conductivity_2}%
            \includegraphics[width=0.19\linewidth, trim={25pt 25pt 25pt 25pt},clip]{img/#1/PDP2_s1.0_t0.85/reco_conductivity_2}%
            \\%
            \rotatebox{90}{\hspace{0.6cm}Frame 160}\hspace{0.15cm}\includegraphics[width=0.19\linewidth, trim={25pt 25pt 25pt 25pt},clip]{img/#1/true/conductivity_3}%
            \includegraphics[width=0.19\linewidth, trim={25pt 25pt 25pt 25pt},clip]{img/#1/NP_s1.0_t0.85/reco_conductivity_3}%
            \includegraphics[width=0.19\linewidth, trim={25pt 25pt 25pt 25pt},clip]{img/#1/PP_s1.0_t0.85/reco_conductivity_3}%
            \includegraphics[width=0.19\linewidth, trim={25pt 25pt 25pt 25pt},clip]{img/#1/PDP1_s1.0_t0.85/reco_conductivity_3}%
            \includegraphics[width=0.19\linewidth, trim={25pt 25pt 25pt 25pt},clip]{img/#1/PDP2_s1.0_t0.85/reco_conductivity_3}%
            \\%
            \rotatebox{90}{\hspace{0.6cm}Frame 240}\hspace{0.15cm}\includegraphics[width=0.19\linewidth, trim={25pt 25pt 25pt 25pt},clip]{img/#1/true/conductivity_4}%
            \includegraphics[width=0.19\linewidth, trim={25pt 25pt 25pt 25pt},clip]{img/#1/NP_s1.0_t0.85/reco_conductivity_4}%
            \includegraphics[width=0.19\linewidth, trim={25pt 25pt 25pt 25pt},clip]{img/#1/PP_s1.0_t0.85/reco_conductivity_4}%
            \includegraphics[width=0.19\linewidth, trim={25pt 25pt 25pt 25pt},clip]{img/#1/PDP1_s1.0_t0.85/reco_conductivity_4}%
            \includegraphics[width=0.19\linewidth, trim={25pt 25pt 25pt 25pt},clip]{img/#1/PDP2_s1.0_t0.85/reco_conductivity_4}%
            \\%
            \rotatebox{90}{\hspace{0.6cm}Frame 320}\hspace{0.15cm}\includegraphics[width=0.19\linewidth, trim={25pt 25pt 25pt 25pt},clip]{img/#1/true/conductivity_5}%
            \includegraphics[width=0.19\linewidth, trim={25pt 25pt 25pt 25pt},clip]{img/#1/NP_s1.0_t0.85/reco_conductivity_5}%
            \includegraphics[width=0.19\linewidth, trim={25pt 25pt 25pt 25pt},clip]{img/#1/PP_s1.0_t0.85/reco_conductivity_5}%
            \includegraphics[width=0.19\linewidth, trim={25pt 25pt 25pt 25pt},clip]{img/#1/PDP1_s1.0_t0.85/reco_conductivity_5}%
            \includegraphics[width=0.19\linewidth, trim={25pt 25pt 25pt 25pt},clip]{img/#1/PDP2_s1.0_t0.85/reco_conductivity_5}%
            \\%
            \rotatebox{90}{\hspace{0.6cm}Frame 400}\hspace{0.15cm}\includegraphics[width=0.19\linewidth, trim={25pt 25pt 25pt 25pt},clip]{img/#1/true/conductivity_6}%
            \includegraphics[width=0.19\linewidth, trim={25pt 25pt 25pt 25pt},clip]{img/#1/NP_s1.0_t0.85/reco_conductivity_6}%
            \includegraphics[width=0.19\linewidth, trim={25pt 25pt 25pt 25pt},clip]{img/#1/PP_s1.0_t0.85/reco_conductivity_6}%
            \includegraphics[width=0.19\linewidth, trim={25pt 25pt 25pt 25pt},clip]{img/#1/PDP1_s1.0_t0.85/reco_conductivity_6}%
            \includegraphics[width=0.19\linewidth, trim={25pt 25pt 25pt 25pt},clip]{img/#1/PDP2_s1.0_t0.85/reco_conductivity_6}%
            \\%
            \includegraphics[width=0.95\linewidth]{}%
    %}
}
\newcommand{\imagearrayl}[1]{
    \hspace{1.55cm} True \hspace{1.5cm} No prediction\hspace{0.95cm} Primal only \hspace{1.3cm} Greedy \hspace{1.2cm} Affine\raggedright
        \\
        \centering
        \rotatebox{90}{\hspace{0.9cm}Frame 1}\hspace{0.15cm}\includegraphics[width=0.19\linewidth, trim={25pt 25pt 25pt 25pt},clip]{img/#1/true/conductivity_1}%
        \includegraphics[width=0.19\linewidth, trim={25pt 25pt 25pt 25pt},clip]{img/#1/NP_s1.0_t0.85/reco_conductivity_1}%
        \includegraphics[width=0.19\linewidth, trim={25pt 25pt 25pt 25pt},clip]{img/#1/PP_s1.0_t0.85/reco_conductivity_1}%
        \includegraphics[width=0.19\linewidth, trim={25pt 25pt 25pt 25pt},clip]{img/#1/PDP1_s1.0_t0.85/reco_conductivity_1}%
        \includegraphics[width=0.19\linewidth, trim={25pt 25pt 25pt 25pt},clip]{img/#1/PDP2_s1.0_t0.85/reco_conductivity_1}%
        \\%
        \rotatebox{90}{\hspace{0.7cm}Frame 400}\hspace{0.15cm}\includegraphics[width=0.19\linewidth, trim={25pt 25pt 25pt 25pt},clip]{img/#1/true/conductivity_2}%
        \includegraphics[width=0.19\linewidth, trim={25pt 25pt 25pt 25pt},clip]{img/#1/NP_s1.0_t0.85/reco_conductivity_2}%
        \includegraphics[width=0.19\linewidth, trim={25pt 25pt 25pt 25pt},clip]{img/#1/PP_s1.0_t0.85/reco_conductivity_2}%
        \includegraphics[width=0.19\linewidth, trim={25pt 25pt 25pt 25pt},clip]{img/#1/PDP1_s1.0_t0.85/reco_conductivity_2}%
        \includegraphics[width=0.19\linewidth, trim={25pt 25pt 25pt 25pt},clip]{img/#1/PDP2_s1.0_t0.85/reco_conductivity_2}%
        \\%
        \rotatebox{90}{\hspace{0.6cm}Frame 800}\hspace{0.15cm}\includegraphics[width=0.19\linewidth, trim={25pt 25pt 25pt 25pt},clip]{img/#1/true/conductivity_3}%
        \includegraphics[width=0.19\linewidth, trim={25pt 25pt 25pt 25pt},clip]{img/#1/NP_s1.0_t0.85/reco_conductivity_3}%
        \includegraphics[width=0.19\linewidth, trim={25pt 25pt 25pt 25pt},clip]{img/#1/PP_s1.0_t0.85/reco_conductivity_3}%
        \includegraphics[width=0.19\linewidth, trim={25pt 25pt 25pt 25pt},clip]{img/#1/PDP1_s1.0_t0.85/reco_conductivity_3}%
        \includegraphics[width=0.19\linewidth, trim={25pt 25pt 25pt 25pt},clip]{img/#1/PDP2_s1.0_t0.85/reco_conductivity_3}%
        \\%
        \rotatebox{90}{\hspace{0.6cm}Frame 1200}\hspace{0.15cm}\includegraphics[width=0.19\linewidth, trim={25pt 25pt 25pt 25pt},clip]{img/#1/true/conductivity_4}%
        \includegraphics[width=0.19\linewidth, trim={25pt 25pt 25pt 25pt},clip]{img/#1/NP_s1.0_t0.85/reco_conductivity_4}%
        \includegraphics[width=0.19\linewidth, trim={25pt 25pt 25pt 25pt},clip]{img/#1/PP_s1.0_t0.85/reco_conductivity_4}%
        \includegraphics[width=0.19\linewidth, trim={25pt 25pt 25pt 25pt},clip]{img/#1/PDP1_s1.0_t0.85/reco_conductivity_4}%
        \includegraphics[width=0.19\linewidth, trim={25pt 25pt 25pt 25pt},clip]{img/#1/PDP2_s1.0_t0.85/reco_conductivity_4}%
        \\%
        \rotatebox{90}{\hspace{0.6cm}Frame 1600}\hspace{0.15cm}\includegraphics[width=0.19\linewidth, trim={25pt 25pt 25pt 25pt},clip]{img/#1/true/conductivity_5}%
        \includegraphics[width=0.19\linewidth, trim={25pt 25pt 25pt 25pt},clip]{img/#1/NP_s1.0_t0.85/reco_conductivity_5}%
        \includegraphics[width=0.19\linewidth, trim={25pt 25pt 25pt 25pt},clip]{img/#1/PP_s1.0_t0.85/reco_conductivity_5}%
        \includegraphics[width=0.19\linewidth, trim={25pt 25pt 25pt 25pt},clip]{img/#1/PDP1_s1.0_t0.85/reco_conductivity_5}%
        \includegraphics[width=0.19\linewidth, trim={25pt 25pt 25pt 25pt},clip]{img/#1/PDP2_s1.0_t0.85/reco_conductivity_5}%
        \\%
        \rotatebox{90}{\hspace{0.6cm}Frame 2000}\hspace{0.15cm}\includegraphics[width=0.19\linewidth, trim={25pt 25pt 25pt 25pt},clip]{img/#1/true/conductivity_6}%
        \includegraphics[width=0.19\linewidth, trim={25pt 25pt 25pt 25pt},clip]{img/#1/NP_s1.0_t0.85/reco_conductivity_6}%
        \includegraphics[width=0.19\linewidth, trim={25pt 25pt 25pt 25pt},clip]{img/#1/PP_s1.0_t0.85/reco_conductivity_6}%
        \includegraphics[width=0.19\linewidth, trim={25pt 25pt 25pt 25pt},clip]{img/#1/PDP1_s1.0_t0.85/reco_conductivity_6}%
        \includegraphics[width=0.19\linewidth, trim={25pt 25pt 25pt 25pt},clip]{img/#1/PDP2_s1.0_t0.85/reco_conductivity_6}%
        \\%
        \includegraphics[width=0.95\linewidth]{img/colorbar_vert}%
}
 \pgfplotsset{
    % Plot a graph breaking it at y=#2 into 1/5 tight scaling lower half, and 3/5 loose scaling
    % upper half. #1 and #3 are minimum and maximum of y.
    zoomed/.code n args={3}{
        \pgfmathsetmacro{\YAmin}{#1}
        \pgfmathsetmacro{\YAzoom}{#2}
        \pgfmathsetmacro{\YAmax}{#3}
        \pgfplotsset{
            y coord trafo/.code={%
                % The abs is only needed because pgfmath is junk, and tries to evaluate both arms
                % before even deciding on the condition.
                \pgfmathifthenelse{##1 >= \YAzoom}
                                  {((abs(##1-\YAzoom)/(\YAmax-\YAzoom)))^(1/3)}
                                  {-(abs((##1-\YAzoom)/(\YAmin-\YAzoom)))^(1/3)}

            },
            y coord inv trafo/.code={%
                \pgfmathifthenelse{##1 >= 0}
                                  {(##1^3)*(\YAmax-\YAzoom)+\YAzoom}
                                  {(-##1^3)*(\YAmin-\YAzoom)+\YAzoom}
            },
            extra y ticks = {\YAzoom},
            y axis line style= { draw=none },
            yticklabel style = {xshift=-2ex},
            ylabel style = {yshift=2ex}, % this is reserved, actually shifts x and negatively
            ymax = \YAmax,
            ymin = \YAmin,
            % extra description/.code = {
            %     % Draw zigzag between the two windows
            %     \draw[line width=0.5pt, dashed] (rel axis cs:0,0)
            %         --(rel axis cs:0,0.15)
            %         --(rel axis cs:-0.025,0.175)
            %         --(rel axis cs:0,0.2);
            %     \draw[line width=0.5pt]
            %         (rel axis cs:0,0.2)
            %         --(rel axis cs:0.025,0.225)
            %         --(rel axis cs:0,0.25)
            %         --(rel axis cs:0,1);
            % }
        }
    }
}
\theoremstyle{definition}
\newtheorem{assumption}[theorem]{Assumption}
\crefname{assumption}{Assumption}{Assumptions}
\crefname{algorithm}{Algorithm}{Algorithms}
\algrenewcommand{\algorithmiccomment}[1]{\hfill$\rightsquigarrow$\ {\itshape #1}}
\date{2024-12-17 (revised 2025-03-17)}
\author{
    Neil Dizon\thanks{%
        School of Mathematics and Statistics, University of New South Wales, Sydney, Australia.
        \email{n.dizon@unsw.edu.au}, \orcid{0000-0001-8664-2255}
    }
    \and
    Jyrki Jauhiainen\thanks{%
        Department of Technical Physics, University of Eastern Finland, Finland,
        \email{jyrki.jauhiainen@uef.fi}, \orcid{0000-0001-6711-6997}
    }
    \and
    Tuomo Valkonen\thanks{%
        Research Center in Mathematical Modeling and Optimization (MODEMAT), Quito, Ecuador
        \emph{and}
        Department of Mathematics and Statistics, University of Helsinki, Finland,
        \email{tuomo.valkonen@iki.fi}, \orcid{0000-0001-6683-3572}
    }
}
\title{Online optimisation for dynamic electrical impedance tomography}
\shorttitle{Online optimisation for dynamic EIT}
    \let\@the@H@page\relax
\begin{document}

\maketitle

\begin{abstract}
    Online optimisation studies the convergence of optimisation methods as the data embedded in the problem changes. Based on this idea, we propose a primal dual online method for nonlinear time-discrete inverse problems. We analyse the method through regret theory and demonstrate its performance in real-time monitoring of moving bodies in a fluid with Electrical Impedance Tomography (EIT). To do so, we also prove the second-order differentiability of the Complete Electrode Model (CEM) solution operator on $L^\infty$.
\end{abstract}

\section{Introduction}
\label{sec:intro}
Electrical impedance tomography (EIT) is an imaging technique for inferring the electrical conductivity distribution within a body through boundary currents and potentials. While measurements in EIT can be performed in real time, reconstructing images from the data is computationally intensive. This challenge is critical in applications such as real-time monitoring of industrial processes, where immediate feedback is essential -- for instance, in detecting blockages or leaks in pipelines.

Traditionally, inverse problems, including those arising in EIT, have been studied in a static context, where robust theoretical foundations and solution methods are available. However, the need for real-time reconstructions in dynamic settings has grown significantly \cite{lechleiter2018dynamic,holland2010reducing,hunt2014weighing,tuomov-phaserec,lipponen2011nonstationary}. Addressing this demand requires novel approaches capable of efficiently processing large data sets and capturing time-dependent changes in the imaged domain.

To this end, we introduce online optimization methods tailored to time-discrete nonlinear inverse problems, formulated as the conceptual problem
\begin{equation}
    \label{eq:intro:problem}
    \min_{(x^0,x^1,x^2,\ldots) \in \PpredictConstr}~ \sum_{k=0}^\infty J_k(\thisx)
\end{equation}
for a set $\PpredictConstr \subset \prod_{k=0}^\infty X_k$ that encodes the temporal couplings between the frame-wise variables $x^k$ over time index $k$.
With Tikhonov-type regularization, the frame-wise objective can be expressed as
\begin{equation}
    \label{eq:intro:j}
    J_k(x) \defeq E(A_k(x) - b_k) + R(x),
\end{equation}
where $R$ is a regularization term, $A_k$ a nonlinear forward operator, $b_k$ the measurement data, and the data fidelity $E$ models noise.

In this work, we focus on EIT process monitoring using isotropic total variation regularization, $R(x)=\alpha\norm{Dx}_{2,1}$.
Here $A_k$ represents the solution operator for the complete electrode model (CEM) \cite{cheng1989electrode}. For given electrode potentials, it maps the electrical conductivity $x$ on a domain $\Omega \subset \R^n$ to boundary current measurements.
The temporal couplings encoded by $\PpredictConstr$ arise from a partial differential equation (PDE) that models matter movement within the body.
For simplicity in our numerical demonstrations, we employ the incompressible transport equation, though more sophisticated models, such as the Navier–Stokes equations, could be applied.

The problem \eqref{eq:intro:problem} is formal: we cannot in practise solve an optimisation problem for an unbounded time segment. At each instant $N$, we can access at most the initial segment $\{(A_k, b_k)\}_{k=0}^N$ of forward operators and data.
It is conceivable to solve the corresponding finite horizon problems
$
    \min_{x^{0:N} \in \PpredictConstr_{0:N}}~ \sum_{k=0}^N J_k(\thisx)
$
where we use the slicing notation
\begin{equation}
    \label{eq:intro:slicing}
    x^{0:N}=(x^0,\dots,x^N)
    \quad\text{and}\quad
    \PpredictConstr_{0:N} \defeq \{ x^{0:N} \mid (x^0,x^1,\ldots) \in \PpredictConstr\}.
\end{equation}
However, even this is numerically unwieldy for large $N$, and hardly real-time with standard optimisation methods. In practise, in a long monitoring process, both CPU and memory requirements would also force us to work with a short time window of data.

\term{Online optimisation} \cite{zinkevich2003online} attempts to solve \eqref{eq:intro:problem} in real time. The idea in most methods is to take a \term{single} step of a standard optimisation method for $\min J_k$ at each index $k$. For introductions we refer to \cite{hazan2016introduction,belmega2018online,orabona2020modern,simonetto2020time}.
Basic online optimisation methods take $\PpredictConstr$ in \eqref{eq:intro:problem} to constrain $x^0=x^1=x^2=\ldots$, i.e., do not consider problems that evolve over time, only data that arrives gradually.
\term{Dynamic online optimisation} methods \cite{hall13dynamical,tuomov-predict,zhang2019distributed,chang2020unconstrained,zhang2022regrets,nonhoff2020online,tang2022running, bastianello2020primal,zhang2021online,bernstein2019online,chen2022online} typically intersperse optimisation steps with \term{prediction} steps for some prediction operators $W_k$ that attempt to model more general temporal constraint sets $\PpredictConstr$ subject to available, possibly noisy and corrupted, information.
In particular, a dynamic online forward-backward (a.k.a.~proximal gradient) method for \eqref{eq:intro:problem}\&\eqref{eq:intro:j} iterates
\begin{equation}
    \label{eq:intro:fb}
    \thisx \defeq \prox_{\tau R}(\this{\primalpredict} - \tau \grad A_k(\this{\primalpredict})^*(A_k(\this{\primalpredict})-b_k))
    \quad\text{with the predictions}\quad
    \nexxt{\primalpredict} \defeq W_k(\thisx).
\end{equation}

Most earlier works on dynamic online optimisation concentrate on such forward-backward methods.
They have limited applicability to inverse problems with total variation regularisation $R = \norm{\grad \freevar}_{2,1}$, as the proximal operator $\prox_{\tau R}(x) \defeq \argmin_z \frac{1}{2}\norm{z-x}^2 + \tau R(z)$ is expensive to calculate: it corresponds to total variation denoising.
Therefore, we developed in \cite{tuomov-predict,better-predict} primal-dual dynamic online optimisation methods for \eqref{eq:intro:problem} with a linear $A_k$ in \eqref{eq:intro:j}.
The initial theory in \cite{tuomov-predict} imposed severe restrictions on the dual component of the predictor. These were relaxed in \cite{better-predict}, where several improved dual predictors were developed, for the primal predictor based on optical flow.

The main results of \cite{tuomov-predict} could also only be interpreted through regularisation theory, as the \term{regret} that we were able to prove was non-symmetric.
Indeed, \term{convergence} results are rarely available for online optimisation methods. Instead, one attempts to bound the \term{regret} of past updates with respect to all information available up to an instant $N$. For \eqref{eq:intro:fb}, one can bound the \term{dynamic regret} \cite{hall13dynamical}
\[
    \dynregret(x^{0:N}) = \sup_{\optx^{0:N} \in \PpredictConstr_{0:N}} \sum_{k=0}^N \left( J_k(x^k) -  J_k(\optx^k)\right).
\]
This may be negative, if the \term{comparison set} $\PpredictConstr_{0:N}$ does not include all the possible “paths” $x^{k:N}$ that the iterates generated by the algorithm may take.
For the primal-dual method of \cite{tuomov-predict}, the result was even weaker: we could only bound
\[
    \sup_{\optx^{0:N} \in \PpredictConstr_{0:N}} \left(\breve J_{0:N}(x^{0:N}) -  \sum_{k=0}^N J_k(\optx^k)\right)
\]
for a temporal infimal convolution $\breve J_{0:N}$ between the comparison set and the framewise objectives.
In \cite{better-predict}, at the cost of having to reduce the size of the comparison set, we managed to symmetrise these results to a bound on
\[
    \mathring J_{0:N}(x^{0:N}) - \sup_{\optx^{0:N} \in \PpredictConstr_{0:N}} \mathring J_{0:N}(\optx^{0:N})
\]
for a different temporal (sub-)infimal convolution $\mathring J_{0:N}$.

The theory in \cite{better-predict} was still for convex functions.
In \cref{sec:pd}, we will
\begin{enumerate}[label=(\alph*),nosep]\itshape
    \item extend the primal-dual method of \cite{better-predict} to non-convex objectives, including \eqref{eq:intro:problem}\&\eqref{eq:intro:j} with non-linear forward operators $A_k$, and
    \item allow for inexact gradients, in particular, inexact $A_k$ and its Jacobian.
\end{enumerate}
The non-convexity adds significant new technical challenges to the proof.
The inexact computations help to reduce the high expense of solving PDEs and can, fortunately, be incorporated with much less effort.
While we do not yet treat interweaved PDE solvers as \cite{jensen2022nonsmooth} did for static problems, that is our ultimate goal.
Instead, we do delayed, intermittent solves of the PDEs “in the background” to obtain real-time performance.
To prove that the EIT problem satisfies the conditions of the online method, in \cref{sec:eit},
\begin{enumerate}[resume*]\itshape
    \item  we will prove the second-order differentiability of the CEM solution operator for $L^\infty$ conductivies.
\end{enumerate}

In the final \cref{sec:numerical}, we evaluate the proposed method numerically on (nearly) real-time EIT reconstruction.
Our approach to EIT process monitoring differs from the recent work in \cite{alsaker2023multithreaded}, where the authors achieved real-time performance using extensive multithreading and code optimisation with the D-bar method \cite{nachman1996global, siltanen2000implementation}. In contrast, our online method attains real-time performance thanks to the inherently light computational cost of iterations of our algorithm, albeit dependent on meticulous predictor design.
Finally, our optimisation-based approach is not the only possibility treat dynamic inverse problems in an online fashion. Especially if uncertainty quantification is required, full Bayesian modelling can be used with numerical techniques such as non-linear Kalman filters and Markov-Chain Monte Carlo \cite{lan2023bayesian,gerber2021online,albers2019ensemble}.
Except in the simplest fully Gaussian cases (which would exclude a total variation prior), this can, however, come with much higher computational resource demands and complexity.

\paragraph{Additional notation}

In the slicing notation \eqref{eq:intro:slicing}, we allow $N=\infty$ with $x^{k:\infty}=(x^{k}, x^{k+1}, \ldots)$ and also set $\PpredictConstr_{k} \defeq \PpredictConstr_{k:k}$.
We write $\linear(X; Y)$ for the space of bounded linear operators between normed spaces $X$ and $Y$, and $\Id \in \linear(X; X)$ for the identity operator.
The block-diagonal operator consisting of $M$ and $\Gamma$ reads $\diag\bigl(M,\; \Gamma\bigr)$.
When $X$ is Hilbert, we abbreviate $\iprod{x}{y}_M = \iprod{Mx}{y}$ for $M \in \linear(X; X)$.
The notation $\fakenorm{x}_{M}^2$ mimics norm notation when $M$ may not be positive semi-definite. If it is, written $M \ge 0$, we set $\norm{x}_M \defeq \sqrt{\iprod{x}{x}_M}$.
For a Borel $g: \R^n \supset \Omega \to \R^n$, we set $\norm{g}_{2,1} \defeq \int_\Omega \norm{g(\xi)}_2 \d \xi$. Applied to the gradient of a differentiable function, this produces the isotropic total variation.

For $A \subset X$ and $x \in X$, we write $\iprod{A}{x} = \{\iprod{z}{x} \mid z \in A\}$.
For any $B \subset \R$ (especially $B=\iprod{A}{x}$), $B \ge 0$ means $t \ge 0$ for all $t \in B$.
For a convex function $F: X \to \extR$, $\subdiff F$ denotes the subdifferential, and $F^*$ the Fenchel conjugate.
The $\{0,\infty\}$-valued indicator function of $A$ is written $\delta_A$.
We refer to \cite{clasonvalkonen2020nonsmooth} for more details on convex analysis.

\section{Online primal-dual proximal splitting for nonconvex problems}
\label{sec:pd}

We consider the conceptual problem
\begin{equation}
    \label{eq:pd:problem}
    \min_{(x^0,x^1,x^2,\ldots) \in \PpredictConstr} \sum_{k=0}^\infty F_k(x^k) + E_k(x^k) + G_k(K_k x^k),
\end{equation}
where $G_k$ and $F_k$ are convex and possibly nonsmooth, but $E_k$ is smooth but possibly nonconvex.
The operator $K_k$ is linear and bounded. The \term{primal comparison set} $\PpredictConstr$ encodes temporal couplings between the framewise variables $x^k$.
This clearly includes \eqref{eq:intro:problem}\&\eqref{eq:intro:j}, with non-linear forward-operators $A_k$ accommodated by setting $E_k(x) = E(A_k(x) - b_k)$.

We present in \cref{alg:pd:alg} our proposed primal-dual online method for \eqref{eq:pd:problem}.
It involves the Fenchel conjugate $G_k^*$ of $G_k$ and a dual variable $y^k$ introduced through writing $G_k(Kx) = \sup_{y} \iprod{K_kx}{y} - G_k^*(y)$, as well as the predictor $P_k$ and the predictions
\[
    (\nexxt{\breve x}, \nexxt{\breve y}) \defeq P_k(\thisx, \thisy).
\]
We have also replaced $\grad E_k(\this{\primalpredict})$ by an estimate $\estgrad_k(\this{\primalpredict})$. This allows for the inexact computation of $A_k$ and $\grad A_k$, as discussed in the Introduction.

To derive the algorithm, note that a single term in \eqref{eq:pd:problem} is minimised when we solve
\begin{equation}
    \label{eq:pd:problem:static-pd}
    \min_{x}\max_y~ F_k(x) + E_k(x) + \iprod{K_kx}{y} - G_k^*(y).
\end{equation}
Taking forward-backward steps alternatingly with respect to $x$ and $y$, we obtain the primal step of the algorithm, as well as the dual step subject to a small modification. The predictor transfers iterates generated this way between time frames and spaces.

In \cref{sec:pd:assumptions}, we provide essential definitions and outline the formal assumptions needed to prove regret bounds for the method in the subsequent \cref{sec:pd:regret}.

\begin{algorithm}
    \caption{Nonconvex predictive online primal-dual proximal splitting (POPD-N).}
    \label{alg:pd:alg}
    \begin{algorithmic}[1]
        \Require For all $k \in \N$, on Hilbert spaces $X_k$ and $Y_k$, convex, proper, lower semi-continuous  $E_{k+1}: X_{k+1} \to \R$, $F_{k+1}: X_{k+1} \to \extR$ and $G_{k+1}^* : Y_{k+1} \to \extR$, primal-dual predictor $P_k: X_k \times Y_k \to X_{k+1} \times Y_{k+1}$, and $K_{k+1} \in \linear(X_{k+1}; Y_{k+1})$.
        Estimates $\estgrad_{k+1}(\nexxt{\primalpredict})$ of the gradients $\grad E_{k+1}(\nexxt{\primalpredict})$.
        Step length parameters $\tau_{k+1},\sigma_{k+1}>0$.
        \State Pick initial iterates $x^0 \in X_0$ and $y^0 \in Y_0$.
        \For{$k \in \N$}
        \State\label{item:alg:pd:predict} $(\nexxt\primalpredict, \nexxt\dualpredict) \defeq P_k(\thisx, \thisy)$
        \Comment{prediction step}
        \State\label{item:alg:pd:primal}$\nextx \defeq \prox_{\tau_{k+1} F_{k+1}}(\nexxt{\primalpredict} - \tau_{k+1}\estgrad_{k+1}(\nexxt{\primalpredict}) - \tau_{k+1} K_{k+1}^*\nexxt{\dualpredict})$
        \Comment{primal step}
        \State\label{item:alg:pd:dual}$\nexty \defeq \prox_{\sigma_{k+1} G_{k+1}^*}(\nexxt{\dualpredict} + \sigma_{k+1} K_{k+1}(2\nextx-\nexxt{\primalpredict}))$
        \Comment{dual step}
        \EndFor
    \end{algorithmic}
\end{algorithm}

\subsection{Assumptions}
\label{sec:pd:assumptions}

Write $u^k=(x^k, y^k)$. Then $u^{0:\infty}$ generated by \cref{alg:pd:alg} from an initial $u^0$, lies in
\[
    \ProductSpace_{0:\infty} \defeq (X_0 \times Y_0) \times (X_1 \times Y_1) \times (X_2 \times Y_2) \cdots.
\]
Since \cref{alg:pd:alg} involves the dual variable $y^k$, we need to expand the set $\PpredictConstr \subset X_{0:\infty}$ of primal comparison sequences in \eqref{eq:pd:problem} to $\PDpredictConstr \subset \ProductSpace_{0:\infty}$. We then define the sets of primal and dual comparison sequences by projection as
\[
    \PpredictConstr \defeq \left\lbrace \optx^{0:\infty} \in X_{0:\infty} \mid (\bar x^{0:\infty},\bar y^{0:\infty}) \in \PDpredictConstr\right\rbrace
    \quad\text{and}\quad
    \DpredictConstr \defeq \left\lbrace \bar y^{0:\infty} \in Y_{0:\infty}  \mid  (\bar x^{0:\infty},\bar y^{0:\infty}) \in \PDpredictConstr\right\rbrace.
\]
To algorithmically track sequences in these sets, we need the primal-dual predictor $P_k$.
We next formalise this predictor, the comparison sequences, and the functions of \eqref{eq:pd:problem}.

\begin{assumption}[Basic structural assumptions]
    \label{ass:pd:main}
    Given $N \in \N$, on Hilbert spaces $X_k$ and $Y_k$, ($0 \le k \le N$), we are provided with:
    \begin{enumerate}[label=(\roman*),nosep]
        \item For all $0 \le k \le N$, convex, proper, and lower semi-continuous $F_k: X_k \to \extR$, and $G_k^*: Y_k \to \extR$ (with respective strong convexity factors $\gamma_k \ge 0$ and $\rho_k \ge 0$), as well as $K_k  \in \mathbb{L}(X_k; Y_k)$, and a possibly non-convex but finite-valued $E_k : X_k \to \R$.
        \item A bounded set $\PDpredictConstr_{0:N} \subset \ProductSpace_{0:N}$ of primal-dual comparison sequences.
        \item
        Primal-dual predictors $P_k: X_k \times Y_k\to X_{k+1}\times Y_{k+1}$.

    \end{enumerate}
\end{assumption}

\begin{example}[Primal-dual predictors]
    \label{ex:pd:predictors}
    In the numerical experiments of \cref{sec:numerical}, for the primal variable $x$, we will use the incompressible flow predictor $W_k: X_k \to X_{k+1}$, $W_k(\thisx): \xi \mapsto \thisx (\xi + h^k(\xi))$, where $h^k(\xi)$ is the predicted displacement at $\xi \in \Omega$ between the frames $k$ and $k+1$. For the dual variable $y$, we use two different predictions: The first one, $T^1_k: Y_k \to Y_{k+1}$, seeks to maintain $\iprod{\snabla \nexxt\primalpredict}{\nexxt\dualpredict} = \iprod{\snabla \thisx}{\thisy}$.
    This is beneficial for preserving total variation \cite{better-predict}: With $G_k(z)=\alpha\norm{z}_{2,1}$, at a solution $(\hat x^k, \hat y^k)$ to \eqref{eq:pd:problem:static-pd}, we have $\iprod{\snabla \hat x^k}{\hat y^k}=\alpha\norm{\snabla \hat x^k}_{2,1}$.
    The second one, $T^2_k: Y_k \to Y_{k+1}$, is defined by the affine update $\nexxt\dualpredict = \thisy + c \snabla \nexxt\primalpredict$. The spatial parameter $c$ is designed to promote sparsity in desired areas. We take $c$ to be inversely proportional to the magnitude of the flow $h^k$, meaning that sparsity is promoted in areas with less inter-frame movement.
\end{example}

Necessary first-order optimality conditions for the static problem $\min F_k+E_k+G_k \circ K_k$, equivalently \eqref{eq:pd:problem:static-pd}, can be expressed with the general notation $u=(x,y)$ as \cite{clasonvalkonen2020nonsmooth,tuomov-nlpdhgm-redo}
\begin{equation}
    \label{eq:pd:h0}
    0 \in H_k(\this{\realoptu})
    \quad\text{for}\quad
    H_k(u) \defeq \begin{pmatrix}
        \subdiff F_k(x) + \grad E_k(x) + K_k^* y\\
        \subdiff G_k^*(y) - K_k x
    \end{pmatrix},
\end{equation}
Likewise, \cref{alg:pd:alg} reads in implicit as iteratively solving for $\thisu$ the inclusion
\begin{gather}
    \label{eq:ppext-pdps}
    0 \in \widetilde H_k(\thisu) + M_k(\thisu-\this\pdpredict),
\shortintertext{where}
    \label{eq:pd:m}
    \widetilde H_k(u) \defeq \begin{pmatrix}
        \subdiff F_k(x) + \estgrad_k(\this{\primalpredict}) + K_k^* y\\
        \subdiff G_k^*(y) - K_k x
    \end{pmatrix}
    \quad\text{and}\quad
    \Precond_k = \begin{pmatrix}
        \inv\tau_k \Id & - K_k^* \\
        -K_k & \inv\sigma_k \Id
    \end{pmatrix}.
\end{gather}
By avoiding explicit proximal maps, this formulation facilitates convergence analysis \cite{tuomov-nlpdhgm} and, by extension, regret analysis.

Since we will be taking forward steps with respect to $E_k$, we will require it to be smooth in an appropriate sense.
The next assumption introduces a global online version of the \term{three-point smoothness} inequality; for the corresponding static inequality, see \cite[Appendix B]{valkonen2020testing} and \cite{clasonvalkonen2020nonsmooth} for exact gradients, and \cite{dizonvalkonen2024tracking} for inexact gradients based on single-loop splitting methods.
For EIT with exact forward operator computations, we reformulate the condition in terms of the PDE itself in \cref{sec:eit}, with some technical results relegated to \cref{app:numerics:pde}.
The assumption also introduces compatibility conditions on the \term{step length parameters} $\tau_k$ and $\sigma_k$ from \cref{alg:pd:alg}, and so-called \term{testing parameters} that encode both primal and dual convergence rates; see \cite{valkonen2020testing,clasonvalkonen2020nonsmooth}.
In this paper, while providing a general theory, we will only apply unaccelerated methods with constant step length and testing parameters.

\begin{assumption}[Global three-point growth and smoothness inequality]
    \label{ass:pd:main-global}
    Given $N \in \N$, \cref{ass:pd:main} holds, and for some $\EkGrowth, \EkLoss \ge 0$, and an error $\Err_k \ge 0$, for all $1 \le k \le N$:
    \begin{enumerate}[label=(\roman*),nosep]
        \item\label{item:pd:main-forwardstep}
        $E_k$ satisfies for all $\this\optx \in \PpredictConstr_k$ and $x \in X_k$ the “erroneous” three-point smoothness
        \[
            \iprod{\estgrad_k(\this\primalpredict)}{x-\this{\optx}}
            \ge
            E_k(x) - E_k(\this{\optx})
            + \frac{\EkGrowth}{2}\norm{x - \this\optx}^2
            - \frac{\EkLoss}{2}\norm{x - \this\primalpredict}^2
            - \Err_k.
        \]
        \item We are given step length and testing parameters $\tau_k,\sigma_k>0$ and $\eta_k,\tauTest_k,\sigmaTest_k > 0$ with
        \begin{subequations}%
            \label{eq:pd:stepconds1}%
            \begin{align}%
                \label{eq:pd:symcond}
                \eta_k & = \tauTest_k\tau_k = \sigmaTest_k \sigma_k,
                && \text{(primal-dual coupling)}
                \\
                \label{eq:pd:primaltestcond-positivity}
                1    & > {
                    \EkLoss\tau_k + \tau_k\sigma_k\norm{K_k}^2}
                && \text{(metric positivity)}.
            \end{align}%
        \end{subequations}
    \end{enumerate}
\end{assumption}

\begin{example}
    \label{ex:pd:noaccel}
    For an unaccelerated method, we take $\tau_k \equiv \tau$ and $\sigma_k \equiv \sigma$ for some $\tau, \sigma > 0$, along with $\eta_k \equiv \tau$, $\tauTest_k \equiv 1$ and $\sigmaTest_k \equiv \frac{\tau}{\sigma}$.
    For accelerated choices for standard (non-online) primal-dual proximal splitting under strong convexity, see \cite{valkonen2020testing,clasonvalkonen2020nonsmooth}.
\end{example}

The next assumption presents a local version of the previous global assumption. In what follows, we will assume \emph{either} of these assumptions to hold.

\begin{assumption}[Local three-point growth and smoothness inequality]
    \label{ass:pd:main-local}
    Given $N \in \N$, \cref{ass:pd:main} holds, and for all $1 \le k \le N$:
    \begin{enumerate}[label=(\roman*),nosep]
        \item\label{item:pd:main-forwardstep-local0}
        $E_k$ satisfies for some $\EkGrowthGlobal \in \R$, $\EkLossGlobal \ge 0$, and $\ErrGlobal_k\ge0$, for any  $\this\realoptu = (\this{\realoptx}, \this{\realopty}) \in H_k^{-1}(0)$, and for all $x \in X_k$ the “erroneous” three-point monotonicity-like property
        \[
            \iprod{\estgrad_k(\this\primalpredict)- \grad E_k(\this\realoptx)}{x - \this\realoptx} \ge {\EkGrowthGlobal}{}\norm{x - \this\realoptx}^2 - {\EkLossGlobal}{}\norm{x - \this\primalpredict}^2 - \ErrGlobal_k.
        \]
        \item\label{item:pd:main-forwardstep-local}
        $E_k$ satisfies for some factors $\EkLoss \ge 0$ and $\EkGrowthMono \ge \EkGrowth \ge 0$, errors $\Err_k, \ErrMono_k \ge 0$, and a radius $\delta>0$, for all  $\this\optx \in \PpredictConstr_{k} \isect B(\this\primalpredict, \delta)$ and $x \in B(\this\optx,{\delta})$, the inequality
        \begin{gather*}
            \iprod{\estgrad_k(\this\primalpredict)}{x-\this{\optx}}
            \ge
            E_k(x)-E_k(\this{\optx})
            + \frac{\EkGrowth}{2}\norm{x - \this\optx}^2
            - \frac{\EkLoss}{2}\norm{x - \this\primalpredict}^2 - \Err_k,
        \intertext{as well as, for any  $\this\realoptu = (\this{\realoptx}, \this{\realopty}) \in H_k^{-1}(0) \isect B(\this\primalpredict, \delta) \times Y_k$ and $x \in B(\realoptx^k,{\delta})$ that}
            \iprod{\estgrad_k(\this\primalpredict)- \grad E_k(\this\realoptx)}{x - \this\realoptx} \ge {\EkGrowthMono}{}\norm{x - \this\realoptx}^2 - {\EkLossMono}{}\norm{x - \this\primalpredict}^2 - \ErrMono_k.
        \end{gather*}
        \item\label{item:pd:main-step-params-local} We are given step length and testing parameters $\tau_k,\sigma_k>0$ and $\eta_k,\tauTest_k,\sigmaTest_k > 0$ as well as a $\EkLipCoeffCoeff \in (0,1)$ with
        \begin{subequations}%
            \label{eq:pd:stepconds1-local}%
            \begin{align}%
                \label{eq:pd:symcond-local}
                \eta_k & = \tauTest_k\tau_k = \sigmaTest_k \sigma_k,
                \quad\text{and}
                \\[-0.5ex]
                \label{eq:pd:primaltestcond-positivity-local}
                1 & \ge
                \max \left\{
                    \EkLoss,
                    -2(\gamma_k + \EkGrowthGlobal),
                    \frac{2}{1-\EkLipCoeffCoeff}\EkLossGlobal,
                    \frac{2}{1-\EkLipCoeffCoeff}\EkLossMono
                \right\} \tau_k
                + \tau_k\sigma_k\norm{K_k}^2.
            \end{align}%
        \end{subequations}
    \end{enumerate}
\end{assumption}

\Cref{ass:pd:main-local}\,\cref{item:pd:main-forwardstep-local0} is global, while \cref{item:pd:main-forwardstep-local} is local. The former is completely analogous to the second part of the latter, however, the factor $\EkGrowthGlobal$ is allowed to be negative.

To use the local version of the three-point growth and smoothness inequality, we need additional assumptions on the comparison sequences. Before stating these assumptions, we formalise the predictors $P_k$ of \cref{alg:pd:alg}.
To do so, we introduce
\begin{equation*}
    \GrowthOp_k \defeq
    \diag\bigl(
        (\gamma_k+\EkGrowth)  \Id,\;
        \rho_k \Id
    \bigr)
    \quad\text{and}\quad
    \LossOp_{k} \defeq
    \diag\bigl(
        \EkLoss\Id,\; 0
    \bigr).
\end{equation*}
For use with \cref{ass:pd:main-local}\,\ref{item:pd:main-forwardstep-local0} and the first part of \ref{item:pd:main-forwardstep-local}, we also define
\begin{align*}
    \GrowthOpGlobal_k & \defeq
    2
    \diag\bigl(
        (\gamma_k + \EkGrowthGlobal) \Id,\;
        \rho_k \Id
    \bigr),
    &
    \LossOpGlobal_k & \defeq
    2\diag\bigl(
        \EkLossGlobal \Id,\;
        0
    \bigr),
    \\
    \GrowthOpMono_k & \defeq
    2
    \diag\bigl(
        (\gamma_k + \EkGrowthMono)  \Id,\;
        \rho_k \Id
    \bigr),
    \quad\text{and}
    &
    \LossOpMono_k & \defeq
    2\diag\bigl(
        \EkLossMono \Id,\;
        0
    \bigr).
\end{align*}

The first part of the next assumption ensures that the set of comparison sequences is large enough to have primal-dual critical points of the static objectives in its proximity.
The second part is a more technical condition on the uniformity of bounds.
To state the assumption, recalling that $\nexxt\pdpredict \defeq P_k(\thisu)$, we define the \term{prediction errors}
\begin{equation}
    \label{eq:pd:prediction-error}
    \epsilon_{k+1}^{\dagger}(u^k, \optu^{k:{k+1}})
    \defeq
    \frac{1}{2}\fakenorm{\nexxt\pdpredict-\overnextu}_{\eta_{k+1}\Precond_{k+1}}^2 - \frac{1}{2}\fakenorm{\this u-\this{\opt u}}_{\eta_k(\Precond_k+ \GrowthOp_k)}^2
    \quad\text{for all}\quad \optu^{k:{k+1}} \in \PDpredictConstr_{k:k+1}.
\end{equation}
They measure the difference of deviation from a chosen comparison sequence between the current iterate and its prediction.
Since $u^k$ is always the algorithm-generated iterate, when $\optu^{k:{k+1}}$ is clear from the context, we write for brevity $\epsilon_{k+1}^{\dagger} \defeq \epsilon_{k+1}^{\dagger}(u^k, \optu^{k:{k+1}})$.
For brevity, we also write
\[
    H_{0:N}(u^{0:N}) \defeq  H_0(u^0)\times\dots\times H_N(u^N)\subset \ProductSpace_{0:N}.
\]

\begin{assumption}[Critical point proximity]
    \label{ass:pd:add-local}
    Let $N \in \N$ be given, and \cref{ass:pd:main-local} to be assumed (instead of \cref{ass:pd:main-global}). Then:
    \begin{enumerate}[label=(\roman*),nosep]
        \item\label{ass:pd:add-local-i}
        For some $\criticalprox_k>0$, ($1 \le k \le N$), we have
        \[
            \PDpredictConstr_{1:N}
            \subset
            \bigl\{
                \bar u^{1:N} \in \ProductSpace_{1:N}
            \bigm|
                \inf\nolimits_{\hat u^{1:N} \in H^{-1}_{1:N}(0)}\norm{\bar u^{k} - \hat u^{k}}_{M_k + \GrowthOpMono_k} \le \criticalprox_k
                \text{ for } 1\le k \le N
            \bigr\}.
        \]
        \item\label{ass:pd:add-local-init-iterate}
        For some $\YoungCoeff_k, \Delta > 0$ as well as $\tilde\delta \in (0, \delta)$,  ($1 \le k \le N$), for
        \[%
            \deltaCoeff_k
            =
            \phi_k(1+2\tau_k\min\{\gamma_k + \EkGrowthGlobal,\EkLossGlobal\}-\tau_k\sigma_k\norm{K_k}^2)
            >
            0
        \]%
        (where the positivity is a consequence of \eqref{eq:pd:primaltestcond-positivity-local})
        for all $\optu^{0:N} \in \PDpredictConstr_{0:N}$
        \begin{subequations}
        \label{eq:pd:d-bound-delta-xi-ineq}
        \begin{gather}
            \label{eq:pd:d-bound}
            \begin{aligned}[t]
            0 < d_N(\optu^{0:N})
            \defeq
            \inf_{0 \le n \le N}
            \biggl(
                \frac{\theta_{n+1}(\delta-\tilde\delta)^2}{\xi_{n+1}}
                &
                - \frac{1+ \Perturbr}{\YoungCoeff_{n+1}}\eta_{n+1}\criticalprox_{n+1}^2
                \\
                &
                - 2\epsilon_{n+1}^{\dagger}
                - \sum_{k=1}^n \left(
                    \frac{1+\Perturbr}{2\EkLipCoeffCoeff}\eta_k\criticalprox_k^2
                    +
                    \epsilon_{k}^{\dagger}
                    +
                    \eta_k\ErrMono_k
                \right)
            \biggr)
            \end{aligned}
            \\[-1ex]
        \shortintertext{as well as}
            \label{eq:pd:delta-xi-ineq}
            (2-\EkLipCoeffCoeff)(\inv\xi_k+1)(\delta-\tilde\delta)^2 + 2\inv\theta_k\eta_k\ErrGlobal_k
            \le \tilde\delta^2.
        \end{gather}
        \end{subequations}
    \end{enumerate}
\end{assumption}

\begin{remark}
    We must have $\theta_{n+1}(\delta-\tilde\delta)^2 \ge (1+ \Perturbr)\eta_{n+1}\criticalprox_{n+1}^2$ and $2\inv\theta_k\eta_k\ErrGlobal_k \le \tilde\delta^2$ for \eqref{eq:pd:d-bound-delta-xi-ineq} to hold.
    Therefore, given these bounds, the optimal choice of $\xi_k$ is to solve it from \eqref{eq:pd:delta-xi-ineq} as an equality, and insert the result into \eqref{eq:pd:d-bound}.
\end{remark}

\begin{remark}
    Similarly to $\theta_k$, also $\criticalprox_k$ depends on the testing parameter $\phi_k$, so its mere increase is not sufficient to satisfy $d_N(\optu^{0:N})>0$.
    For constant step lengths, $\phi_k \equiv 1$, $\eta_k \equiv \tau$, and $\xi_k \equiv \kappa_k \equiv \kappa$, provided that $\gamma_{n+1} + \EkGrowthGlobal[n+1] \ge 0$ for all $1 \le n \le N$,
    taking and ensuring
    \begin{gather*}
        \bar\theta_N \defeq 1-\tau\sigma\sup\limits_{1 \le k \le N}\norm{K_k}^2>0,
        \\[-1.5ex]
    \shortintertext{\eqref{eq:pd:d-bound-delta-xi-ineq} hold when}
        \bar\theta_N(\delta-\tilde\delta)^2
        >
        \sum_{k=1}^{N+1}
        \left(
            (1+\Delta)\tau\criticalprox_k^2
            +
            2\kappa\epsilon_{k}^{\dagger}
            +
            \tau\kappa\ErrMono_k
        \right)
        \quad\text{and}
        \\
        (2-\kappa)(\inv\kappa+1)(\delta-\tilde\delta)^2 + 2\inv{\bar\theta}_N\tau\ErrGlobal_k
            \le \tilde\delta^2.
    \end{gather*}
    That is, we can satisfy the conditions by taking $\tau$ and $\tau\sigma$ small enough, $\tilde\delta< \delta$ large enough, given that the prediction errors $\epsilon_{k}^{\dagger}$ have small sums compared to the radius $\delta>0$. However, large critical point proximities $\criticalprox_k$ and gradient errors $\ErrGlobal_k$ and $\ErrMono_k$ can be compensated for by small step lengths, as long as their sums are bounded.
\end{remark}

\subsection{Regret analysis}
\label{sec:pd:regret}

We now analyse the regret of \cref{alg:pd:alg}.
The main work is with the local \cref{ass:pd:main-local} for the nonconvex function $E_k$. Readers only interested in the global \cref{ass:pd:main-global}, or the general ideas, may skip \cref{lemma:pd:apriori,lemma:pd:aposteriori}.

We first verify the positive semi-definiteness of operators central to our analysis.

\begin{lemma}
    \label{lemma:pd:positivity}
    Let $N \ge 1$ and suppose \cref{ass:pd:main} holds for any $0 \le k \le N$ and $u^{1:N}$ generated by \cref{alg:pd:alg} for an initial $u^0 \in X_0 \times Y_0$. Then the following statements hold:
    \begin{enumerate}[label=(\roman*)]
    \item\label{item:pd:positivity:i}
    The operators $\GrowthOp_k$, $\GrowthOpMono_k,$  $\LossOp_k$, and $\LossOpMono_k$ are positive semi-definite.
    \item\label{item:pd:positivity:ii}
    If \cref{ass:pd:main-global} holds, then $\Precond_k$ and $\Precond_k - \LossOp_k$ are positive semi-definite.
    \item\label{item:pd:positivity:iii}
    If \cref{ass:pd:main-local} holds,  then $\Precond_k$ and $\Precond_k - \LossOp_k$ are positive semi-definite, as well as
    \begin{align*}
        (1-\EkLipCoeffCoeff)M_k
        & \ge \LossOpGlobal_k,
        &
        \eta_k(M_k + \LossOpGlobal_k)
        & \ge \diag\bigl( \deltaCoeff_k \Id,\; 0 \bigr),
        &
        \GrowthOpMono_k
        & \ge 2\GrowthOp_k,
        \\
        (1-\EkLipCoeffCoeff)M_k
        & \ge \LossOpMono_k,
        &
        \eta_k(M_k + \GrowthOpGlobal_k)
        & \ge \diag\bigl( \deltaCoeff_k \Id,\; 0 \bigr)
    \end{align*}
    for
    $
        \deltaCoeff_k
        =
        \phi_k(1+2\tau_k\min\{\gamma_k + \EkGrowthGlobal,\EkLossGlobal\}-\tau_k\sigma_k\norm{K_k}^2) > 0
    $
    as in \cref{ass:pd:add-local}.
    \end{enumerate}
\end{lemma}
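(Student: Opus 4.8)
The plan is to reduce each claimed relation to the positive semi-definiteness of a single $2\times 2$ operator block of the form
\[
    \begin{pmatrix} A\,\Id & -c\,K_k^* \\ -c\,K_k & B\,\Id \end{pmatrix},
\]
with scalars $A,B\ge 0$ and $c\ge 0$. Expanding the associated quadratic form and bounding the cross term by Young's inequality, $2c\iprod{K_k x}{y}\le 2c\norm{K_k}\,\norm{x}\,\norm{y}$, such a block is positive semi-definite exactly when $AB\ge c^2\norm{K_k}^2$. The whole proof is then a matter of reading off $A$, $B$, $c$ from the operator definitions and checking this scalar ``determinant'' inequality against the step-length conditions; recall throughout that $\Precond_k=M_k$ has primal/dual diagonal $\inv\tau_k,\inv\sigma_k$ and off-diagonal $-K_k$.

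For part~(i) I would merely collect signs: $\GrowthOp_k,\GrowthOpMono_k,\LossOp_k,\LossOpMono_k$ are block-diagonal with scalar multiples of $\Id$, so it suffices that each multiple is nonnegative. This holds since $\gamma_k,\rho_k\ge 0$ by \cref{ass:pd:main} and $\EkGrowth,\EkLoss\ge 0$ under either growth assumption, while under \cref{ass:pd:main-local}\,\cref{item:pd:main-forwardstep-local} we additionally have $\EkGrowthMono\ge\EkGrowth\ge0$ and the loss factor $\EkLossMono\ge0$. For part~(ii) I apply the block criterion with $c=1$: for $\Precond_k$ one has $A=\inv\tau_k$, $B=\inv\sigma_k$, so $AB\ge\norm{K_k}^2$ reads $\tau_k\sigma_k\norm{K_k}^2\le1$, which follows from \eqref{eq:pd:primaltestcond-positivity} because $\EkLoss\tau_k\ge0$; for $\Precond_k-\LossOp_k$ only $A=\inv\tau_k-\EkLoss$ changes, and $AB\ge\norm{K_k}^2$ rearranges (multiplying by $\tau_k\sigma_k$) to $\EkLoss\tau_k+\tau_k\sigma_k\norm{K_k}^2\le1$, which is exactly \eqref{eq:pd:primaltestcond-positivity}.

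Part~(iii) carries the bulk of the work. The positivity of $\Precond_k$ and $\Precond_k-\LossOp_k$ is identical to part~(ii), now using that the maximum in \eqref{eq:pd:primaltestcond-positivity-local} dominates $\EkLoss$. The relation $\GrowthOpMono_k\ge 2\GrowthOp_k$ is immediate, as the difference is block-diagonal with primal entry $2(\EkGrowthMono-\EkGrowth)\Id\ge0$ and dual entry $0$. For $(1-\EkLipCoeffCoeff)\Precond_k\ge\LossOpGlobal_k$ and $(1-\EkLipCoeffCoeff)\Precond_k\ge\LossOpMono_k$ I again use the block criterion, now with $c=1-\EkLipCoeffCoeff$, $B=(1-\EkLipCoeffCoeff)\inv\sigma_k$, and $A=(1-\EkLipCoeffCoeff)\inv\tau_k-2\EkLossGlobal$ (respectively $-2\EkLossMono$); dividing $AB\ge c^2\norm{K_k}^2$ by $(1-\EkLipCoeffCoeff)>0$ and multiplying by $\tau_k\sigma_k$ turns it into $(1-\EkLipCoeffCoeff)(1-\tau_k\sigma_k\norm{K_k}^2)\ge 2\EkLossGlobal\tau_k$, which is precisely the $\frac{2}{1-\EkLipCoeffCoeff}\EkLossGlobal$ (respectively $\EkLossMono$) term of \eqref{eq:pd:primaltestcond-positivity-local} after rearrangement.

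The two remaining relations $\eta_k(\Precond_k+\LossOpGlobal_k)\ge\diag(\deltaCoeff_k\Id,0)$ and $\eta_k(\Precond_k+\GrowthOpGlobal_k)\ge\diag(\deltaCoeff_k\Id,0)$ are the crux, and here the coupling \eqref{eq:pd:symcond-local}, $\eta_k=\tauTest_k\tau_k=\sigmaTest_k\sigma_k$, must be invoked to collapse the scalars. For the first, the block has $c=\eta_k$, $B=\eta_k\inv\sigma_k=\sigmaTest_k$, and, using $\eta_k\inv\tau_k=\tauTest_k$ and the definition of $\deltaCoeff_k$, top-left coefficient
\[
    A=\tauTest_k+2\eta_k\EkLossGlobal-\deltaCoeff_k=\tauTest_k\bigl(2\tau_k(\EkLossGlobal-\min\{\gamma_k+\EkGrowthGlobal,\EkLossGlobal\})+\tau_k\sigma_k\norm{K_k}^2\bigr).
\]
Since $\EkLossGlobal\ge\min\{\gamma_k+\EkGrowthGlobal,\EkLossGlobal\}$, this gives $A\ge\tauTest_k\tau_k\sigma_k\norm{K_k}^2\ge0$, whence $AB\ge(\tauTest_k\tau_k)(\sigmaTest_k\sigma_k)\norm{K_k}^2=\eta_k^2\norm{K_k}^2=c^2\norm{K_k}^2$, the required criterion. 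The second relation is identical after replacing $\EkLossGlobal$ by $\gamma_k+\EkGrowthGlobal$ in the top-left coefficient (again $\ge\min\{\gamma_k+\EkGrowthGlobal,\EkLossGlobal\}$) and noting $B=\sigmaTest_k+2\eta_k\rho_k\ge\sigmaTest_k$. The positivity $\deltaCoeff_k>0$ is that posited in \cref{ass:pd:add-local}; its nonnegativity already follows from the $-2(\gamma_k+\EkGrowthGlobal)$ term of \eqref{eq:pd:primaltestcond-positivity-local} together with $\EkLossGlobal\ge0$. I expect the only delicate point to be this last pair: keeping the $\min$ on the correct side and reconciling $\tauTest_k\tau_k=\sigmaTest_k\sigma_k=\eta_k$ so that both determinant products collapse to exactly $\eta_k^2\norm{K_k}^2$; everything else is routine sign-checking.
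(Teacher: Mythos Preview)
Your proof is correct and follows essentially the same route as the paper: both reduce each operator inequality to a scalar condition by handling the off-diagonal $K_k$-term via Young's inequality. The only cosmetic difference is that the paper chooses the Young parameter to cancel the dual block exactly (obtaining an explicit diagonal lower bound with $K_k^*K_k$ in the primal entry, then $K_k^*K_k\le\norm{K_k}^2\Id$), whereas you package the same computation as the Schur-complement/``determinant'' criterion $AB\ge c^2\norm{K_k}^2$; the resulting scalar inequalities checked against \eqref{eq:pd:primaltestcond-positivity} and \eqref{eq:pd:primaltestcond-positivity-local} are identical.
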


\begin{proof}
    The positive semi-definiteness of $\GrowthOp_k$, $\GrowthOpMono_k$, $\LossOp_k$, and $\LossOpMono_k$ follows from $\gamma_k,\rho_k,\EkLoss,\EkGrowth,\EkGrowthMono,\EkLossMono \ge 0$.
    This establishes \cref{item:pd:positivity:i}.

    For \cref{item:pd:positivity:ii}, using \cref{ass:pd:main-global} and Young's inequality, we have
    \[
        \eta_k(M_k - \LossOp_k)
        =
        {\eta_k}{}
        \begin{pmatrix}
            (\inv\tau_k - \EkLoss)  \Id & - K_k^* \\
            - K_k & \inv\sigma_k \Id
        \end{pmatrix}
        \ge
        \phi_k
        \begin{pmatrix}
            \Id - \tau_k \EkLoss - \tau_k\sigma_k K_k^*K_k & 0 \\
            0 & 0
        \end{pmatrix}.
    \]
    Thus, \eqref{eq:pd:primaltestcond-positivity} establishes the positive semi-definiteness of $\Precond_k$ and $\Precond_k - \LossOp_k$ .

    Finally, for \cref{item:pd:positivity:iii}, using  \eqref{eq:pd:primaltestcond-positivity-local} in \cref{ass:pd:main-local}, and Young's inequality, we estimate%
    \[
        \begin{aligned}[t]
            (1-\EkLipCoeffCoeff)
            \Precond_k
            -
            \LossOpGlobal_k
            &
            =
            (1-\EkLipCoeffCoeff)
            \begin{pmatrix}
                \bigl(\inv\tau_k - \frac{2\EkLossGlobal}{1-\EkLipCoeffCoeff}\bigr)\Id & - K_k^* \\
                -K_k & \bigl(\inv\sigma_k + \frac{2\rho_k}{1-\EkLipCoeffCoeff}\bigr)\Id
            \end{pmatrix}
            \\
            &
            \ge
            (1-\EkLipCoeffCoeff)
            \begin{pmatrix}
                \bigl(\inv\tau_k - \frac{2\EkLossGlobal}{1-\EkLipCoeffCoeff}\bigr) \Id - \sigma_k K_k^*K_k & 0 \\
                0 & 0
            \end{pmatrix}
            \ge
            0,
        \end{aligned}
    \]
    which also shows that $\Precond_k \ge 0$.
    Likewise, we show $(1-\EkLipCoeffCoeff)\Precond_k \ge \LossOpMono_k$.
    Moreover,
    \[
        \begin{aligned}[t]
            \eta_k(
                \Precond_k
                +
                \GrowthOpGlobal_k
            )
            &
            =
            \eta_k
            \begin{pmatrix}
                \left(\inv\tau_k + 2(\gamma_k + \EkGrowthGlobal)\right)\Id & - K_k^* \\
                -K_k & \left(\inv\sigma_k + {2\rho_k}{}\right)\Id
            \end{pmatrix}
            \\
            &
            \ge
            \phi_k
            \begin{pmatrix}
                \left(1 + 2\tau_k(\gamma_k + \EkGrowthGlobal)\right) \Id - \tau_k\sigma_k K_k^*K_k & 0 \\
                0 & 0
            \end{pmatrix}
            \ge
            \begin{pmatrix} \theta_k \Id & 0 \\ 0 & 0 \end{pmatrix}.
        \end{aligned}
    \]
    The claim regarding $\eta_k(M_k + \LossOpGlobal_k)$ follows by a similar argument, as does the positive semi-definiteness of $\Precond_k - \LossOp_k$.
    Finally,
    $
        \GrowthOpMono_k - 2\GrowthOp_k
        =
        \diag\bigl(
            (\EkGrowthMono-\EkGrowth)  \Id,\,
            0
        \bigr)
    $
    by the definitions of $\GrowthOp_k$ and $\GrowthOpMono_k$.
    Since $\EkGrowthMono \ge \EkGrowth$ by \cref{ass:pd:main-local}\,\cref{item:pd:main-forwardstep-local}, this proves $\GrowthOpMono_k \ge 2\GrowthOp_k$.
\end{proof}

\Cref{lemma:pd:positivity} justifies the norm notation $\norm{\freevar}_M$ for $M=\Precond_k,\Precond_k+ \GrowthOp_k,\Precond_k - \LossOp_k$. Now, using the critical point proximity \cref{ass:pd:add-local} and the global Lipschitz-like bound of \cref{ass:pd:main-local}\,\cref{item:pd:main-forwardstep-local0}, and assuming sufficient proximity of the previous iterate $\prev u$ to the corresponding time index of a comparison sequence, the next “a priori” lemma shows that the current primal iterate $\thisx$ is in the ball where the local three-point inequalities of \cref{ass:pd:main-local}\,\cref{item:pd:main-forwardstep-local} hold. The “a posteriori” lemma that follows, will then show that, in fact, $\thisx$ is also in the proximity of a comparison sequence. An inductive argument will then easily establish regret estimates as in the convex case \cite{better-predict}.

\begin{lemma}[A priori estimate]
    \label{lemma:pd:apriori}
    Let $N \ge 1$ and suppose \cref{ass:pd:main,ass:pd:main-local,ass:pd:add-local} hold, and that $u^{1:N}$ and $\pdpredict^{1:N}$ are generated by \cref{alg:pd:alg} for an initial $u^0 \in X_0 \times Y_0$.
    Further, let $\optu^{0:N} \in \PDpredictConstr_{0:N}$ and $\Perturbr > 0$, and for a $1 \le k \le N$, suppose that
    \begin{equation}
        \label{eq:pd:apriori:cond}
        \norm{\prevu-\prev{\optu}}_{\eta_{k-1}(M_{k-1}+ \GrowthOp_{k-1})}^2
        \le
        \frac{\theta_k(\delta-\tilde\delta)^2}{\xi_k}
        - \frac{1+ \Perturbr}{\YoungCoeff_k}\eta_k\criticalprox_k^2 - 2\epsilon_{k}^{\dagger}
    \end{equation}
    for the prediction error $\epsilon_{k}^{\dagger}=\epsilon_{k}^{\dagger}(u^{k-1},\optu^{k-1:k})$ of \cref{eq:pd:prediction-error}, the constants $c_k, \kappa_k, \theta_k,\YoungCoeff_k,\criticalprox_k,\tilde\delta$ and radius $\delta>0$ of \cref{ass:pd:add-local,ass:pd:main-local}.
    Then there exists $ \this\realoptu = (\this\realoptx,\this\realopty) \in H_{k}^{-1}(0)$ with
    \begin{gather}
        \label{eq:pd:apriori:existence-bound}
        \norm{\bar u^{k} - \hat u^{k}}_{M_k + \GrowthOpMono_k}^2 \le (1+\Perturbr)\criticalprox^2_k,
    \shortintertext{as well as}
        \label{eq:pd:apriori:claim}
        \norm{\thisx-\this\realoptx}
        <
        \xrealoptxbound,
        \quad
        \norm{\this\primalpredict-\this\realoptx}
        <
        \pxrealoptxbound,
        \quad
        \norm{\this\primalpredict-\this\optx}
        <
        \xoptxbound,
        \quad
        \text{and}
        \quad
        \norm{\thisx-\this\optx}
        <
        \pxoptxbound.
    \end{gather}
\end{lemma}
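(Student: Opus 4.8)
The plan is to produce the nearby critical point $\this\realoptu$ first, then reduce all four primal distance claims \eqref{eq:pd:apriori:claim} to three ingredients: the critical-point proximity in \cref{ass:pd:add-local}\,\cref{ass:pd:add-local-i}, the prediction-error bound hidden in the hypothesis \eqref{eq:pd:apriori:cond}, and a monotonicity estimate obtained by testing the implicit form \eqref{eq:ppext-pdps} against $\thisu-\this\realoptu$. Throughout I would convert the various $\Precond_k$-type (semi)norms into Euclidean primal norms using \cref{lemma:pd:positivity}\,\cref{item:pd:positivity:iii}, in particular $\eta_k(\Precond_k+\GrowthOpGlobal_k)\ge\diag\bigl(\deltaCoeff_k\Id,\;0\bigr)$, the primal positivity of $\eta_k\Precond_k$, and $(1-\EkLipCoeffCoeff)\Precond_k\ge\LossOpGlobal_k$.

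\emph{Step 1 (existence of $\this\realoptu$).} Since $\optu^{0:N}\in\PDpredictConstr_{0:N}$, its tail lies in $\PDpredictConstr_{1:N}$, so \cref{ass:pd:add-local}\,\cref{ass:pd:add-local-i} gives $\inf_{\hat u^k\in H_k^{-1}(0)}\norm{\optu^k-\hat u^k}_{\Precond_k+\GrowthOpMono_k}\le\criticalprox_k$. Choosing $\this\realoptu$ to nearly attain this infimum and absorbing the (arbitrarily small) slack into $\Perturbr>0$ yields \eqref{eq:pd:apriori:existence-bound}.

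\emph{Step 2 (prediction near the comparison point).} Expanding the definition \eqref{eq:pd:prediction-error} of $\epsilon_k^{\dagger}=\epsilon_k^{\dagger}(\prevu,\optu^{k-1:k})$ and substituting into \eqref{eq:pd:apriori:cond}, the previous-iterate term $\norm{\prevu-\prev{\optu}}_{\eta_{k-1}(\Precond_{k-1}+\GrowthOp_{k-1})}^2$ cancels (using $\Precond_k\ge 0$, so $\fakenorm{\freevar}$ is a genuine seminorm), leaving the clean bound $\norm{\pdpredict^k-\optu^k}_{\eta_k\Precond_k}^2\le\tfrac{1}{\xi_k}\bigl(\deltaCoeff_k(\delta-\tilde\delta)^2-(1+\Perturbr)\eta_k\criticalprox_k^2\bigr)$. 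Converting via the primal positivity of $\eta_k\Precond_k$ already controls $\norm{\this\primalpredict-\this\optx}$.

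\emph{Step 3 (iterate near $\this\realoptu$: the crux).} Testing the inclusion $0\in\widetilde H_k(\thisu)+\Precond_k(\thisu-\pdpredict^k)$ against $\thisu-\this\realoptu$ and using $0\in H_k(\this\realoptu)$: the monotonicity of $\subdiff F_k,\subdiff G_k^*$ (factors $\gamma_k,\rho_k$) and the skew $\pm K_k$ terms cancel, while the nonconvex gradient difference $\iprod{\estgrad_k(\this\primalpredict)-\grad E_k(\this\realoptx)}{\thisx-\this\realoptx}$ is bounded below by the \emph{global} inequality \cref{ass:pd:main-local}\,\cref{item:pd:main-forwardstep-local0} (valid for all $x$, with possibly negative $\EkGrowthGlobal$). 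The three-point identity for $\Precond_k$ then gives $\tfrac12\norm{\thisu-\this\realoptu}^2_{\Precond_k+\GrowthOpGlobal_k}+\tfrac12\norm{\thisu-\pdpredict^k}^2_{\Precond_k-\LossOpGlobal_k}\le\tfrac12\norm{\pdpredict^k-\this\realoptu}^2_{\Precond_k}+\ErrGlobal_k$; dropping the middle (nonnegative, since $(1-\EkLipCoeffCoeff)\Precond_k\ge\LossOpGlobal_k$) term and applying the $\deltaCoeff_k$ lower bound produces $\deltaCoeff_k\norm{\thisx-\this\realoptx}^2\le\eta_k\norm{\pdpredict^k-\this\realoptu}^2_{\Precond_k}+2\eta_k\ErrGlobal_k$. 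A weighted Young split of $\norm{\pdpredict^k-\this\realoptu}^2_{\Precond_k}$ into $\norm{\pdpredict^k-\optu^k}^2_{\eta_k\Precond_k}$ (Step 2) and $\norm{\optu^k-\this\realoptu}^2_{\Precond_k}\le(1+\Perturbr)\criticalprox_k^2$ (Step 1, via $\GrowthOpMono_k\ge 0$), with weights encoded in $\xi_k$ and the $\criticalprox_k$-subtraction of \eqref{eq:pd:apriori:cond} chosen precisely to offset the proximity contribution, collapses the right-hand side to $\deltaCoeff_k(\delta-\tilde\delta)^2$ up to the error $2\eta_k\ErrGlobal_k$. Finally, $\norm{\breve x^k-\this\realoptx}$, $\norm{\breve x^k-\this\optx}$ and $\norm{\thisx-\this\optx}$ follow by triangle inequalities together with the primal conversions, using the relation $\deltaCoeff_k\le(2-\EkLipCoeffCoeff)\,\phi_k(1-\tau_k\sigma_k\norm{K_k}^2)$ (a consequence of \eqref{eq:pd:primaltestcond-positivity-local}); the design inequality \eqref{eq:pd:delta-xi-ineq}, whose left-hand side is exactly $(2-\EkLipCoeffCoeff)(\inv\xi_k+1)(\delta-\tilde\delta)^2+2\inv\deltaCoeff_k\eta_k\ErrGlobal_k\le\tilde\delta^2<\delta^2$, then forces each distance strictly below $\delta$.

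The principal obstacle is exactly the nonconvexity of $E_k$: because $\grad E_k$ is not monotone, the usual contraction estimate for $H_k$ fails, so Step 3 must rely on the globally valid monotonicity-like inequality \cref{ass:pd:main-local}\,\cref{item:pd:main-forwardstep-local0} with its possibly negative $\EkGrowthGlobal$, and the resulting loss term $\EkLossGlobal\norm{\thisx-\this\primalpredict}^2$ can only be absorbed through the positivity margin $(1-\EkLipCoeffCoeff)\Precond_k\ge\LossOpGlobal_k$. Conceptually, the whole purpose of this a priori lemma is to break a circularity: the sharp local inequalities \cref{ass:pd:main-local}\,\cref{item:pd:main-forwardstep-local} hold only inside the $\delta$-ball, yet they are what we ultimately want to use on the iterate — so one must first certify, using \emph{only} the global inequalities, that both $\thisx$ and $\this\primalpredict$ land within $\delta$ of $\this\realoptx$ and $\this\optx$. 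The remaining effort is bookkeeping: threading $\deltaCoeff_k$, $\criticalprox_k$, $\xi_k$, $\EkLipCoeffCoeff$ and the errors $\ErrGlobal_k$ through \eqref{eq:pd:d-bound-delta-xi-ineq} so that every bound closes below $\delta$ rather than merely being finite.
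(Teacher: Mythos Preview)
Your proposal is correct and follows essentially the same route as the paper: obtain $\this\realoptu$ from critical-point proximity, test \eqref{eq:ppext-pdps} against $\thisu-\this\realoptu$ using the global monotonicity-like inequality of \cref{ass:pd:main-local}\,\cref{item:pd:main-forwardstep-local0}, apply the Pythagoras identity and a Young split with parameter $\xi_k$ between $\this\pdpredict-\this\optu$ and $\this\optu-\this\realoptu$, and close via the design inequality \eqref{eq:pd:delta-xi-ineq}. The only cosmetic difference is the placement of the loss term: the paper carries $\LossOpGlobal_k$ with $\this\pdpredict-\this\realoptu$ (so the right-hand side of the Pythagoras estimate is in $M_k+\LossOpGlobal_k$ and the middle term in $M_k$), whereas you keep it with $\thisu-\this\pdpredict$ exactly as \cref{ass:pd:main-local}\,\cref{item:pd:main-forwardstep-local0} reads, which gives a slightly sharper intermediate bound that still closes under \eqref{eq:pd:delta-xi-ineq} once you invoke $\theta_k\le(2-\kappa_k)\phi_k(1-\tau_k\sigma_k\norm{K_k}^2)$ as you note.
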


\begin{proof}
    By \cref{ass:pd:add-local}\,\ref{ass:pd:add-local-i}, \eqref{eq:pd:apriori:existence-bound} holds for some $\this\realoptu = (\this{\realoptx}, \this{\realopty}) \in H_k^{-1}(0)$.
    By the definition of $H_k$, we have
    $\this{\realopt q} \defeq -\grad E_k(\this{\realopt x})-K_k^* \this{\realopty} \in \subdiff F_k(\this{\realoptx})$ and $\this{\realopt p} \defeq K_k\this{\realoptx} \in \subdiff G_k^*(\this{\realopty})$.
    Now Cauchy-Schwartz inequality, Young's inequality and the (strong) monotonicity of $\subdiff F_k$ and $\subdiff G_k^*$ together with the erroneous three-point monotonicity-like property of $E_k$ and $\estgrad_k$ in \cref{ass:pd:main-local}\,\cref{item:pd:main-forwardstep-local0} yield
    \[
        \begin{aligned}[t]
            \iprod{\widetilde H_k(\thisx,\thisy)}{\thisu-\this{\realoptu}}
            &
            =
            \iprod{\subdiff{F_k(\thisx)} - \this{\realopt q}}{\thisx - \this\realoptx}
            + \iprod{\subdiff G_k^*(\thisy) - \this{\realopt p}}{\thisy-\this\realopty}
            \\
            \MoveEqLeft[-1]
            +  \iprod{\estgrad_k(\this\primalpredict)- \grad E_k(\this\realoptx)}{\thisx - \this\realoptx}
            \\
            \MoveEqLeft[-1]
            + \iprod{K_k^*(\thisy-\this{\realopty})}{\thisx-\this\realoptx}
            - \iprod{K_k(\thisx-\this{\realoptx})}{\thisy - \this\realopty}
            \\
            &
            \ge
            (\gamma_k+\EkGrowthGlobal)\norm{\thisx-\this\realoptx}^2
            +\rho_k\norm{\thisy-\this\realopty}^2
            -\EkLossGlobal \norm{\this\primalpredict-\this\realoptx}^2
            -\ErrGlobal_k
            \\
            &
            =
            \frac{1}{2}\fakenorm{\thisu-\this\realoptu}_{\GrowthOpGlobal_k}^2
            -\frac{1}{2}\norm{\this\pdpredict-\this\realoptu}_{\LossOpGlobal_k}^2
            -\ErrGlobal_k.
        \end{aligned}
    \]
    Applying here \eqref{eq:ppext-pdps} and the Pythagoras' identity
    \begin{gather}
        \nonumber
        \iprod{\thisu-\this\pdpredict}{\thisu-\this{\realoptu}}_{\Precond_k }
        =
        \frac{1}{2}\norm{\thisu-\this\pdpredict}^2_{\Precond_k }
        +
        \frac{1}{2}\norm{\thisu-\this{\realoptu}}^2_{\Precond_k }
        -
        \frac{1}{2}\norm{\this{\realoptu} - \this\pdpredict }^2_{\Precond_k },
        \\[-0.5ex]
    \shortintertext{we get}
        \label{eq:pd:apriori:pythagoras}
        \frac{1}2\norm{\this\pdpredict-\this{\realoptu}}_{M_k + \LossOpGlobal_k}^2
        \ge
        \frac{1}{2}\norm{\thisu-\this\realoptu}_{M_k+\GrowthOpGlobal_k}^2
        + \frac{1}{2}\norm{\thisu-\this\pdpredict}_{\Precond_k}^2
        -\ErrGlobal_k.
    \end{gather}
    The operators $M_k$, $M_k+\GrowthOpGlobal_k$, and $M_k + \LossOpGlobal_k$ are positive semi-definite by \cref{lemma:pd:positivity}.%

    Since $\YoungCoeff_k > 0$ by \cref{ass:pd:add-local}\,\ref{ass:pd:add-local-init-iterate}, Young's inequality gives
    \[
        \begin{aligned}
            \norm{\this\pdpredict-\this{\realoptu}}_{M_k + \LossOpGlobal_k}^2
            &
            \le
            (1+\YoungCoeff_k)\norm{\this\pdpredict - \this{\optu}}_{M_k + \LossOpGlobal_k}^2
            +
            (1+\inv\YoungCoeff_k)\norm{\this{\optu}-\this{\realoptu}}_{M_k + \LossOpGlobal_k}^2 .
        \end{aligned}
    \]
    By \cref{lemma:pd:positivity}, $M_k + \LossOpGlobal_k \le (2 - \EkLipCoeffCoeff)M_k$.
    By the definition of the prediction errors in \eqref{eq:pd:prediction-error}
    \[
        \frac{1}{2}\norm{\this\pdpredict-\this\optu}_{\eta_{k}\Precond_{k}}^2 \le \frac{1}{2}\norm{\prev u-\prev{\opt u}}_{\eta_{k-1}(\Precond_{k-1}+ \GrowthOp_{k-1})}^2
        + \epsilon_{k}^{\dagger}
    \]
    Using the above inequalities and $\GrowthOp_k \ge 0$ from \cref{lemma:pd:positivity}, we thus obtain
    \[
        \begin{aligned}
            \frac{1}{2}\norm{\this\pdpredict-\this{\realoptu}}_{\eta_k(M_k + \LossOpGlobal_k)}^2
            &
            \le
            \frac{(2-\EkLipCoeffCoeff)(1+\YoungCoeff_k)}{2}
            \left(
            \norm{\prevu-\prev{\optu}}_{\eta_{k-1}(M_{k-1}+ \GrowthOp_{k-1})}^2
            +
            2\epsilon_{k}^{\dagger}
            +
            \inv\YoungCoeff_k\norm{\this\optu-\this{\realoptu}}_{\eta_k M_k}^2
            \right).
        \end{aligned}
    \]
    We multiply \eqref{eq:pd:apriori:pythagoras} by $\eta_k$ and use it on the left hand side.
    Then we use \cref{eq:pd:apriori:cond,eq:pd:apriori:existence-bound} on the right hand side. This yields
    \begin{gather}
        \label{eq:pd:apriori:2}
        \begin{aligned}
        \frac{1}{2}\norm{\thisu-\this\realoptu}_{\eta_k(M_k+\GrowthOpGlobal_k)}^2
        + \frac{1}{2}\norm{\thisu-\this\pdpredict}_{\eta_k\Precond_k}^2
        &
        \le
        \frac{1}{2}\norm{\this\pdpredict-\this{\realoptu}}_{\eta_k(M_k + \LossOpGlobal_k)}^2
        \le
        \frac{1}{2}\alpha_k\theta_k
        \end{aligned}
    \end{gather}
    for
    $
        \alpha_k \defeq (2-\EkLipCoeffCoeff)(\inv\xi_k+1)(\delta-\tilde\delta)^2 + 2\inv\theta_k\eta_k\ErrGlobal_k.
    $
    By \cref{ass:pd:add-local}\,\cref{ass:pd:add-local-init-iterate}, we have $\alpha_k \le \tilde\delta^2 < \delta^2$.
    Thus \cref{lemma:pd:positivity} and \eqref{eq:pd:apriori:2} show
    \begin{align}
        \label{eq:pd:apriori:3}
        \delta^2\theta_k
        &
        \ge
        \alpha_k\theta_k
        \ge
        \norm{\this\pdpredict-\this{\realoptu}}_{\eta_k(M_k + \LossOpGlobal_k)}^2
        \ge
        \pNormCoeff\norm{\this\primalpredict-\this\realoptx}^2
    \quad\text{as well as}\\
        \nonumber%
        \delta^2\theta_k
        &
        \ge
        \alpha_k\theta_k
        \ge
        \norm{\thisu-\this\realoptu}_{\eta_k(M_k+\GrowthOpGlobal_k)}^2
        \ge
        \pNormCoeff\norm{\thisx-\this\realoptx}^2.
    \end{align}
    This shows the first and second parts of of \eqref{eq:pd:apriori:claim}.

    By \cref{lemma:pd:positivity} and \eqref{eq:pd:apriori:cond}, we have $\norm{u}_{\eta_k M_k}^2 \ge \pNormCoeff\norm{x}^2$, $\GrowthOpMono_k \ge 0$, and
    $\eta_k\criticalprox_k^2<(1+\Perturbr)\eta_k\criticalprox_k^2 < \theta_k(\delta-\tilde\delta)^2$.
    Thus using \cref{ass:pd:main-local}\,\cref{ass:pd:add-local-i}, we establish
    \[
        \pNormCoeff\norm{\this\optx-\this{\realoptx}}^2
        \le
        \norm{\this\optu-\this{\realoptu}}_{\eta_k M_k}^2
        \le
        \norm{\this\optu-\this{\realoptu}}_{\eta_k(M_k + \GrowthOpMono_k)}^2
        \le
        \eta_k\criticalprox_k^2
        <
        \pNormCoeff(\delta-\tilde\delta)^2.
    \]
    Hence $\norm{\this\optx-\this{\realoptx}} \le \delta-\tilde\delta$.
    By \cref{ass:pd:add-local}\,\cref{ass:pd:add-local-init-iterate}, $\alpha_k < \tilde\delta^2$.
    Minding \eqref{eq:pd:apriori:3}, it follows that
    \[
        \begin{aligned}[t]
        \norm{\this\primalpredict-\this\optx}
        &
        \le
        \norm{\this\primalpredict-\this\realoptx}
        +
        \norm{\this\optx-\this\realoptx}
        \le
        \sqrt{\alpha_k} + \delta-\tilde\delta
        <
        \delta,
        \end{aligned}
    \]
    and completely analogously $\norm{\thisx-\this\optx} < \delta$, proving last two inequalities of \eqref{eq:pd:apriori:claim}.
\end{proof}

Now that we know that the curent primal iterate belongs to a ball where the estimates of \cref{ass:pd:main-local}\,\cref{item:pd:main-forwardstep-local} hold, we can continue with the aforementioned a posteriori estimate.

\begin{lemma}[A posteriori estimate]
    \label{lemma:pd:aposteriori}
    Let $N \ge 1$ and suppose \cref{ass:pd:main,ass:pd:main-local,ass:pd:add-local} hold, $\optu^{0:N} \in \PDpredictConstr_{0:N}$.
    Let $u^{1:N}$ and $\pdpredict^{1:N}$ be generated by \cref{alg:pd:alg} for an initial $u^0 \in X_0 \times Y_0$ satisfying for $d_N$ given in \cref{ass:pd:add-local} the local initialisation condition
    \begin{equation}
        \label{eq:pd:aposteriori:init-cond}
        \frac{1}{2}\norm{ u^0-{\opt u^0}}_{\eta_{0}(\Precond_{0}+ \GrowthOp_{0})}^2
        \le
        d_N(\bar u^{0:N}).
    \end{equation}
    Then for all $1 \le n \le N$, the three-point smoothness and growth inequality
    \begin{equation}
        \label{eq:pd:aposteriori:claim:growth}
        E_n(x^n)-E_n({\optx^n})
        + \frac{\EkGrowth}{2}\norm{\optx^n -x^n}^2
        - \frac{\EkLoss}{2}\norm{x^n-\primalpredict^n}^2
        \le
        \iprod{\estgrad_n(\primalpredict^n)}{x^n-{\optx^n}}
        +\Err_k
    \end{equation}
    holds, as does the tracking inequality
    \begin{equation}
        \label{eq:pd:aposteriori:claim:tracking}
        \frac{1}{2}\norm{u^n-\optu^n}_{ \eta_{n}(\Precond_{n}+ \GrowthOp_{n})}^2
        <
        \frac{1}{2}\norm{ u^0-{\opt u^0}}_{\eta_{0}(\Precond_{0}+ \GrowthOp_{0})}^2
        +
        \sum_{k=1}^n
        \left(
            \epsilon_{k}^{\dagger}
            +
            \frac{1}{\EkLipCoeffCoeff}\criticalprox_k^2
        \right).
    \end{equation}
\end{lemma}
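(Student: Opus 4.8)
The plan is to induct on $n$, carrying the tracking inequality \eqref{eq:pd:aposteriori:claim:tracking} as the induction hypothesis and using it to feed \cref{lemma:pd:apriori} at the next index. At the inductive step I assume \eqref{eq:pd:aposteriori:claim:tracking} at index $n-1$ (the case $n=1$ reducing to the initialisation \eqref{eq:pd:aposteriori:init-cond} with an empty sum), and first verify that the a priori hypothesis \eqref{eq:pd:apriori:cond} holds at index $n$, i.e.\ that $\norm{u^{n-1}-\optu^{n-1}}_{\eta_{n-1}(\Precond_{n-1}+\GrowthOp_{n-1})}^2$ stays below the threshold on the right of \eqref{eq:pd:apriori:cond}. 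For this I chain the inductive tracking bound with \eqref{eq:pd:aposteriori:init-cond}, which replaces $\tfrac12\norm{u^0-\optu^0}^2$ by $d_N(\bar u^{0:N})$, and then invoke the definition \eqref{eq:pd:d-bound} of $d_N$ at the index $n-1$ of its defining infimum. The quantity $d_N$ is engineered precisely so that $d_N$ together with the accumulated tracking error $\sum_{k=1}^{n-1}(\epsilon_k^\dagger+\tfrac{1}{\EkLipCoeffCoeff}\criticalprox_k^2)$ leaves exactly the a priori threshold at index $n$; the margin $\Perturbr>0$ and the subtracted $\eta_k\ErrMono_k$ terms in \eqref{eq:pd:d-bound} furnish the slack. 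This is the most bookkeeping-intensive step, requiring careful tracking of $\theta_k,\xi_k,\criticalprox_k,\tilde\delta$ and $\delta$.

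Granting \eqref{eq:pd:apriori:cond} at index $n$, \cref{lemma:pd:apriori} supplies a critical point $\realoptu^n\in H_n^{-1}(0)$ and the localisation \eqref{eq:pd:apriori:claim}; in particular $\optx^n\in\PpredictConstr_n\isect B(\primalpredict^n,\delta)$ and $x^n\in B(\optx^n,\delta)$, which are exactly the membership conditions under which the first (local) three-point smoothness of \cref{ass:pd:main-local}\,\cref{item:pd:main-forwardstep-local} applies at $x=x^n$. That is precisely the growth inequality \eqref{eq:pd:aposteriori:claim:growth}, so this claim is immediate once the a priori estimate is in place.

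It remains to advance the tracking inequality. Here I would use the implicit form \eqref{eq:ppext-pdps}, so that $-\Precond_k(u^k-\pdpredict^k)\in\widetilde H_k(u^k)$, and test it against $u^k-\optu^k$ in the $\eta_k$-scaled inner product. Three ingredients combine: the Pythagoras identity for $\Precond_k$ (splitting $\iprod{\Precond_k(u^k-\pdpredict^k)}{u^k-\optu^k}$ into the $\norm{u^k-\optu^k}_{\Precond_k}^2$, $\norm{u^k-\pdpredict^k}_{\Precond_k}^2$ and $\norm{\pdpredict^k-\optu^k}_{\Precond_k}^2$ terms); the strong monotonicity of $\subdiff F_k$ and $\subdiff G_k^*$ with factors $\gamma_k,\rho_k$ together with the just-established \eqref{eq:pd:aposteriori:claim:growth}, which jointly produce the growth operator $\GrowthOp_k$ and the loss $\LossOp_k$ while the skew $K_k$-terms cancel; and \cref{lemma:pd:positivity}, which gives $\Precond_k-\LossOp_k\ge0$ so that the displacement term $\norm{u^k-\pdpredict^k}_{\Precond_k-\LossOp_k}^2$ may be discarded. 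As $\optu^k$ is only a comparison point and not a genuine saddle point, the residual Lagrangian-gap term is handled by passing to the nearby critical point $\realoptu^k$ through \cref{ass:pd:add-local}\,\cref{ass:pd:add-local-i} (the monotonicity-like second part of \cref{ass:pd:main-local}\,\cref{item:pd:main-forwardstep-local} governing the behaviour near $\realoptu^k$) and a Young's inequality with constant $\EkLipCoeffCoeff$, which is where the $\tfrac{1}{\EkLipCoeffCoeff}\criticalprox_k^2$ contribution and the $(1-\EkLipCoeffCoeff)$ factors from \cref{lemma:pd:positivity} enter. Rewriting $\tfrac12\norm{\pdpredict^k-\optu^k}_{\eta_k\Precond_k}^2$ via the prediction-error definition \eqref{eq:pd:prediction-error} as $\epsilon_k^\dagger+\tfrac12\norm{u^{k-1}-\optu^{k-1}}_{\eta_{k-1}(\Precond_{k-1}+\GrowthOp_{k-1})}^2$, the one-step estimate becomes
\[
    \tfrac12\norm{u^k-\optu^k}_{\eta_k(\Precond_k+\GrowthOp_k)}^2
    \le
    \tfrac12\norm{u^{k-1}-\optu^{k-1}}_{\eta_{k-1}(\Precond_{k-1}+\GrowthOp_{k-1})}^2
    + \epsilon_k^\dagger + \tfrac{1}{\EkLipCoeffCoeff}\criticalprox_k^2,
\]
and telescoping over $k=1,\dots,n$ gives \eqref{eq:pd:aposteriori:claim:tracking}; the strict inequality is inherited from the strict localisation \eqref{eq:pd:apriori:claim} and the positive margin $d_N>0$.

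The main obstacle is this one-step estimate under non-convexity and inexact gradients: coaxing the three-point smoothness into supplying $\GrowthOp_k$ while the $\EkLoss$-loss is absorbed into $\Precond_k-\LossOp_k\ge0$, and converting the comparison point $\optu^k$ into the saddle point $\realoptu^k$ without losing the sign of the gap. Getting the constants in the a priori threshold to match the accumulated tracking error exactly — so that the induction can re-enter \eqref{eq:pd:apriori:cond} at each step — is the delicate part that the definition of $d_N$ in \cref{ass:pd:add-local} is tailored to.
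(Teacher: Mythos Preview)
Your high-level plan is correct and matches the paper's: induct so that at each step the a~priori hypothesis of \cref{lemma:pd:apriori} is available, deduce the localisation, and from that both the three-point inequality \eqref{eq:pd:aposteriori:claim:growth} and a one-step descent that telescopes into \eqref{eq:pd:aposteriori:claim:tracking}. The bookkeeping with $d_N$, $\theta_k$, $\xi_k$, $\criticalprox_k$ is exactly as you describe.

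The place where your execution diverges from the paper---and where it is not clear your route succeeds---is the one-step tracking estimate. You propose to test \eqref{eq:ppext-pdps} against $\thisu-\this\optu$ and to use the \emph{first} (smoothness) inequality of \cref{ass:pd:main-local}\,\ref{item:pd:main-forwardstep-local} together with convexity of $F_k,G_k^*$. That combination, as in the proof of \cref{thm:pd:main-gap}, produces
\[
    \tfrac{1}{2}\norm{\this\pdpredict-\this\optu}_{\eta_k M_k}^2 + \eta_k\Err_k
    \ge
    \eta_k\GenGap^H_k(\thisu,\this\optu)
    + \tfrac{1}{2}\norm{\thisu-\this\optu}_{\eta_k(M_k+\GrowthOp_k)}^2
    + \tfrac{1}{2}\norm{\thisu-\this\pdpredict}_{\eta_k(M_k-\LossOp_k)}^2,
\]
and you are left with the Lagrangian gap $\GenGap^H_k(\thisu,\this\optu)$. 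Since $E_k$ is nonconvex, $\this\realoptu$ is only a critical point, so even $\GenGap^H_k(\thisu,\this\realoptu)\ge 0$ may fail; and the difference $\GenGap^H_k(\thisu,\this\optu)-\GenGap^H_k(\thisu,\this\realoptu)$ involves function value differences such as $(F_k+E_k)(\this\realoptx)-(F_k+E_k)(\this\optx)$, which the monotonicity-like inequality does not control and which are not obviously $O(\criticalprox_k^2)$. Your sentence ``the residual Lagrangian-gap term is handled by passing to the nearby critical point'' therefore hides a genuine difficulty.

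The paper sidesteps this entirely by reversing the order: it tests \eqref{eq:ppext-pdps} against $\thisu-\this\realoptu$ and uses the \emph{second} (monotonicity-like) inequality of \cref{ass:pd:main-local}\,\ref{item:pd:main-forwardstep-local}, together with $0\in H_k(\this\realoptu)$ and the monotonicity of $\partial F_k,\partial G_k^*$. This yields a purely metric estimate with $\GrowthOpMono_k,\LossOpMono_k$---no function values, no gap term. Only \emph{after} that does the paper pass from $\this\realoptu$ to $\this\optu$ by two Pythagoras identities, producing cross terms $\iprod{\this\pdpredict-\thisu}{\this\optu-\this\realoptu}_{\eta_k M_k}$ and $\iprod{\thisu-\this\optu}{\this\optu-\this\realoptu}_{\eta_k\GrowthOpMono_k}$ that are bounded via Young's inequality (with constant $\EkLipCoeffCoeff$) and \eqref{eq:pd:apriori:existence-bound}, giving the $\frac{1}{\EkLipCoeffCoeff}\criticalprox_k^2$ contribution. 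This is also why the operators $\GrowthOpMono_k\ge 2\GrowthOp_k$ and $(1-\EkLipCoeffCoeff)M_k\ge\LossOpMono_k$ from \cref{lemma:pd:positivity}\,\ref{item:pd:positivity:iii} are needed, whereas your route never explains what role $\GrowthOpMono_k,\LossOpMono_k$ and $\ErrMono_k$ play. In short: test against the critical point first and change base points metrically afterwards, rather than testing against the comparison point and trying to repair the resulting gap.
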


\begin{proof}
    The estimate \eqref{eq:pd:aposteriori:claim:growth} follows directly from \cref{ass:pd:main-local} if we show that the iterates and predictions $x^k, \breve x^k \in B(\optx^k,\delta)$.
    This follows from \cref{lemma:pd:apriori}, if we prove its assumptions, i.e., \eqref{eq:pd:apriori:cond} for all $1 \le k \le N$.
    We do this by induction, and in the course of it, also prove \eqref{eq:pd:aposteriori:claim:tracking}.
    That is, we prove by induction, for all $1 \le n \le N$ that
    \begin{equation}
        \label{eq:pd:aposteriori:0}
        \frac{1}{2}\norm{ u^{k-1}-{\opt u^{k-1}}}_{\eta_{k-1}(\Precond_{k-1}+ \GrowthOp_{k-1})}^2
        \le
        \frac{\theta_k(\delta-\tilde\delta)^2}{\xi_k}
        - \frac{1+ \Perturbr}{\YoungCoeff_k}\eta_k\criticalprox_k^2
        - 2\epsilon_{k}^{\dagger}
        \quad\text{for all } 1 \le k \le n.
    \end{equation}

    The inductive basis, i.e., \eqref{eq:pd:aposteriori:0} for for $n=1$, follows directly from \eqref{eq:pd:aposteriori:init-cond} and the definition of $d_N$ in \cref{ass:pd:add-local}.
    For the inductive step, suppose that \eqref{eq:pd:aposteriori:0} holds for some $1 \le n < N$.
    We will prove that the same holds for $n+1$.
    Towards this end, let $k=1,\ldots,n$ be arbitrary.
    The inductive assumption \eqref{eq:pd:aposteriori:0} directly proves the assumption \eqref{eq:pd:apriori:cond} of \cref{lemma:pd:apriori} up to $k=n$.
    By the lemma, $x^k,\primalpredict^k \in B(\realoptx^k,{\delta})$ for some $\realoptu^k=(\realoptx^k,\realopty^k) \in \inv H_k(0)$, so the latter inequality of \cref{ass:pd:main-local}\,\ref{item:pd:main-forwardstep-local} gives
    \[
        \iprod{\estgrad_k(\this\primalpredict)- \grad E_k(\this\realoptx)}{\thisx - \this\realoptx} \ge {\EkGrowthMono}{}\norm{\thisx - \this\realoptx}^2 - {\EkLossMono}{}\norm{\thisx-\this\primalpredict}^2
        -\ErrMono_k.
    \]
    Recall the expression for $\widetilde H_k$ from \eqref{eq:pd:m} and $H_k$ from \eqref{eq:pd:h0}.
    Since $0 \in H_k(\this{\realoptx}, \this{\realopty})$, we have $\this{\realopt q} \defeq -\grad E_k(\realopt x^k)-K_k^* \this{\realopty} \in \subdiff F_k(\this{\realoptx})$ and $\this{\realopt p} \defeq K_k\this{\realoptx} \in \subdiff G_k^*(\this{\realopty})$. These together with the above three-point inequality and the (strong) monotonicity of $\subdiff F_k$ and $\subdiff G_k^*$ yield
    \begin{equation}
        \label{eq:pd:aposteriori:1}
        \begin{aligned}[t]
            \iprod{\widetilde H_k(\thisx,\thisy)}{\thisu-\this{\realoptu}}
            &
            =
            \iprod{\subdiff{F_k(\thisx)} - \this{\realopt q}}{\thisx - \this\realoptx}
            +  \iprod{\estgrad_k(\this\primalpredict)- \grad E_k(\this\realoptx)}{\thisx - \this\realoptx}
            \\
            \MoveEqLeft[-1]
            + \iprod{\subdiff G_k^*(\thisy) - \this{\realopt p}}{\thisy-\this\realopty}
            \\
            \MoveEqLeft[-1]
            + \iprod{K_k^*(\thisy-\this{\realopty})}{\thisx-\this\realoptx}
            - \iprod{K_k(\thisx-\this{\realoptx})}{\thisy - \this\realopty}
            \\
            &
            \ge
            \left(\gamma_k+{\EkGrowthMono}{}\right)\norm{\thisx-\this\realoptx}^2
            +\rho_k\norm{\thisy-\this\realopty}^2
            -{\EkLossMono}{}  \norm{\thisx-\this\primalpredict}^2
            -\ErrMono_k
            \\
            &
            = \frac{1}{2}\norm{\thisu - \this{\hat u}}_{\GrowthOpMono_k}^2 - \frac{1}{2}\norm{\thisu-\this\pdpredict}_{  \LossOpMono_k}^2
            -\ErrMono_k.
            \end{aligned}
    \end{equation}
    Applying the linear “testing operator” $\iprod{\freevar}{\thisu-\this{\realoptu}}$ to both sides of \eqref{eq:ppext-pdps} then yields
    \[
        0 \ge
        \iprod{\thisu-\this\pdpredict}{\thisu-\this{\realoptu}}_{\Precond_k}
        +\frac{1}{2}\norm{\thisu-\this\realoptu}_{\GrowthOpMono_k}^2
        - \frac{1}{2}\norm{\thisu-\this\pdpredict}_{ \LossOpMono_k}^2
        -\ErrMono_k.
    \]
    Since $M_k$ is positive semi-definite by \cref{lemma:pd:positivity}, as are $M_k+ \GrowthOpMono_k$ and $\Precond_k - \LossOpMono_k$, they all induce semi-norms.
    Similarly to the proof of \cref{lemma:pd:apriori}, the Pythagoras' identity yields
    \[
        \frac{1}{2}\norm{\this\pdpredict-\this{\realoptu}}_{M_k}^2
        \ge
        \frac{1 }{2}\norm{\thisu-\this\realoptu}_{M_k+ \GrowthOpMono_k}^2
        + \frac{1}{2}\norm{\thisu-\this\pdpredict}_{\Precond_k - \LossOpMono_k}^2
        -\ErrMono_k
    \]
    Using here the Pythagoras' identitities
    \begin{align*}
        \norm{\thisu-\this\realoptu}_{M_k+ \GrowthOpMono_k}^2
        &
        = \norm{\thisu - \this{\optu}}_{M_k+ \GrowthOpMono_k}^2 + \norm{ \this{\optu} -\this\realoptu}_{M_k+ \GrowthOpMono_k}^2
        + 2\iprod{\thisu - \this{\optu}}{\this{\optu}-\this\realoptu}_{M_k+ \GrowthOpMono_k}
        \quad\text{and}
        \\
        \norm{\this\pdpredict-\this{\realoptu}}_{ M_k}^2
        &
        =
        \norm{\this\pdpredict-\this\optu}_{ M_k}^2
        +
        \norm{\this\optu-\this{\realoptu}}_{ M_k}^2
        +
        2\iprod{\this\pdpredict-\this\optu}{\this\optu-\this{\realoptu}}_{ M_k},
    \end{align*}
    we obtain
    \[
        \begin{aligned}
            &
            \frac{1}{2}\norm{\this\pdpredict-\this\optu}_{M_k}^2
            +
            \iprod{\this\pdpredict-\this\optu}{\this\optu-\this{\realoptu}}_{M_k}
            +
            \ErrMono_k
            \\[-0.5ex]
            &
            \ge
            \frac{1}{2}\norm{\thisu-\this\optu}_{  M_k+ \GrowthOpMono_k }^2
            + \frac{1}{2}\norm{\this\optu-\this\realoptu}_{ \GrowthOpMono_k }^2
            + \frac{1}{2}\norm{\thisu-\this\pdpredict}_{\Precond_k - \LossOpMono_k}^2
            + \iprod{\thisu - \this{\optu}}{\this{\optu}-\this\realoptu}_{M_k+ \GrowthOpMono_k}.
        \end{aligned}
    \]
    Splitting $\iprod{\this\pdpredict-\this\optu}{\this\optu-\this{\realoptu}}_{M_k} - \iprod{\thisu - \this{\optu}}{\this{\optu}-\this\realoptu}_{M_k} =  \iprod{\this\pdpredict-\thisu}{\this\optu-\this{\realoptu}}_{M_k}$, multiplying by $\eta_k$, using the definition \cref{eq:pd:prediction-error} of the prediction errors, and rearranging, we obtain
    \begin{multline}
        \label{eq:pd:aposteriori:induction}
        \frac{1}{2}\norm{\prev u-\prev{\opt u}}_{\eta_{k-1}(\Precond_{k-1}+ \GrowthOp_{k-1})}^2
        + (\epsilon_{k}^{\dagger} + \eta_k\ErrMono_k)
        +
        \iprod{\this\pdpredict-\thisu}{\this\optu-\this{\realoptu}}_{ \eta_k M_k}
        \\
        \ge
        \frac{1}{2}\norm{\thisu-\this\optu}_{\eta_k(M_k+ \GrowthOpMono_k)}^2
        +
        \frac{1}{2}\norm{\this\optu-\this\realoptu}_{ \eta_k\GrowthOpMono_k }^2
        +
        \frac{1}{2}\norm{\thisu-\this\pdpredict}_{\eta_k(\Precond_k - \LossOpMono_k)}^2
        +
        \iprod{\thisu - \this{\optu}}{\this{\optu}-\this\realoptu}_{\eta_k\GrowthOpMono_k}.
    \end{multline}
    Summing both sides of \eqref{eq:pd:aposteriori:induction} over $k=1,\dots,n$, telescoping, and rearranging gives
    \begin{gather}
        \label{eq:pd:aposteriori:5}
        \frac{1}{2}\norm{ u^0-{\opt u^0}}_{\eta_{0}(\Precond_{0}+ \GrowthOp_{0})}^2
        +
        \sum_{k=1}^n
        \bigl(
            (\epsilon_{k}^{\dagger}+\eta_k\ErrMono_k)
            + A_k
        \bigr)
        \ge
        \frac{1}{2}\norm{u^n-\optu^n}_{ \eta_{n}(\Precond_{n}+\GrowthOpMono_{n})}^2,
    \shortintertext{for}
        \nonumber
        \begin{aligned}[t]
            A_k
            &
            \defeq
            \iprod{\this\pdpredict-\thisu}{\this\optu-\this{\realoptu}}_{ \eta_k M_k}
            -
            \iprod{\thisu - \this{\optu}}{\this{\optu}-\this\realoptu}_{\eta_k\GrowthOpMono_k}
            \\
            \MoveEqLeft[-1]
            -
            \frac{1}{2}\norm{\this\optu-\this\realoptu}_{\eta_k\GrowthOpMono_k }^2
            -
            \frac{1}{2}\norm{\thisu-\this\optu}_{\eta_k(\GrowthOpMono_k -  \GrowthOp_k)}^2
            -
            \frac{1}{2}\norm{\thisu-\this\pdpredict}_{\eta_k(\Precond_k - \LossOpMono_k)}^2,
        \end{aligned}
    \end{gather}
    where we used the notation $\norm{\cdot}_{\GrowthOpMono_k -  \GrowthOp_k}$ since \cref{lemma:pd:positivity} guarantees $\GrowthOpMono_{k} \ge  \GrowthOp_{k} + \GrowthOpMono_{k}/2  \ge \GrowthOp_{k}$ and both operators are by definition self-adjoint. Using the first of these inequalities, $M_k - \LossOpMono_k \ge \EkLipCoeffCoeff M_k$ (which also holds by  \cref{lemma:pd:positivity}), and $0<\YoungCoeffIII<1$, applying of Young's inequality twice, and rearranging, gives
    \[
        \begin{aligned}
            A_k
            &
            \le
            \frac{\YoungCoeffIII}{2}\norm{\thisu-\this\pdpredict}_{\eta_k M_k}^2
            +
            \frac{1}{2\YoungCoeffIII}\norm{\this\optu-\this{\realoptu}}_{\eta_k M_k}^2
            +
            \frac{1}{4}\norm{\thisu - \this{\optu}}_{\eta_k\GrowthOpMono_k}^2
            +
            \frac{1}{2}\norm{\this\optu-\this\realoptu}_{\eta_k\GrowthOpMono_k}^2
            \\
            \MoveEqLeft[-1]
            -
            \frac{1}{2}\norm{\thisu-\this\optu}_{\eta_k(\GrowthOpMono_k -  \GrowthOp_k)}^2
            -
            \frac{1}{2}\norm{\thisu-\this\pdpredict}_{\eta_k(\Precond_k - \LossOpMono_k)}^2
            \\
            &
            \le
            \frac{1}{2\YoungCoeffIII}\norm{\this\optu-\this{\realoptu}}_{\eta_k(M_k + \GrowthOpMono_k)}^2
            \le
            \frac{1+\Perturbr}{2\YoungCoeffIII}\eta_k\criticalprox_k^2.
        \end{aligned}
    \]
    In the final step we have used \eqref{eq:pd:apriori:existence-bound} from \cref{lemma:pd:apriori}.
    Inserting this result in \eqref{eq:pd:aposteriori:5} and using $\GrowthOp_N \le \GrowthOpMono_N$ from \cref{lemma:pd:positivity}, we obtain
    \[
        \frac{1}{2}\norm{u^n-\optu^n}_{ \eta_{n}(\Precond_{n}+\GrowthOp_{n})}^2
        \le
        \frac{1}{2}\norm{ u^0-{\opt u^0}}_{\eta_{0}(\Precond_{0}+ \GrowthOp_{0})}^2
        +
        \sum_{k=1}^n
        \left(
            \epsilon_{k}^{\dagger} + \eta_k\ErrMono_k
            +
            \frac{1+\Perturbr}{2\EkLipCoeffCoeff}\eta_k\criticalprox_k^2
        \right).
    \]
    Since $\Perturbr < 1$, this directly proves the tracking estimate \eqref{eq:pd:aposteriori:claim:tracking}.
    Using the assumed  \eqref{eq:pd:aposteriori:init-cond}, with the definition of $d_N$ from \cref{ass:pd:add-local}, we also obtain \eqref{eq:pd:aposteriori:0} for $k=n+1$, finishing the induction step.
    As mentioned in the beginning of the proof, it now follows from \cref{lemma:pd:apriori} that $x^k, \breve x^k \in B(\optx^k,\delta)$ for all $1 \le k \le N$, so that \eqref{eq:pd:aposteriori:claim:growth} follows from \cref{ass:pd:main-local}\,\cref{item:pd:main-forwardstep-local}.
\end{proof}

With this, we are now ready to show the main theorems. For brevity, we define the initialisation and cumulative prediction and gradient error
\begin{gather}
    \label{eq:pd:cumulative-errors}
    e_N^\Sigma(u^{0:N-1},\opt u^{0:N}) \defeq
    \sum_{k=0}^{N-1}\left(
        \epsilon_{k+1}^{\dagger}(u^k,\optu^{k:k+1}) + \eta_{k+1}\Err_{k+1}
    \right)
\shortintertext{and the Lagrangian duality gap}
    \nonumber
    \GenGap_{k}^H(u,\optu)
    \defeq
    \bigl(
        [F_{k}+E_{k}](x) + \iprod{K_{k}x}{\opty} - G_{k}^*(\opty)
    \bigr)
    -\bigl(
        [F_{k}+E_{k}](\optx) + \iprod{K_{k}^*y}{\optx} - G_{k}^*(y)
    \bigr).
\end{gather}
The next theorem shows that the sum of the Lagrangian duality gaps is bounded by the initialisation and the cumulative prediction error.
\begin{theorem}
    \label{thm:pd:main-gap}
    Let $N \ge 1$ and suppose \cref{ass:pd:main} holds for $u^{1:N}$ generated by \cref{alg:pd:alg} for an initial $u^0 \in X_0 \times Y_0$.
    Let $\optu^{0:N} \in \PDpredictConstr^{0:N}$, and suppose that either
    \begin{enumerate}[label=(\roman*),nosep]
        \item\label{item:pd:main-gap:global}
        The global \cref{ass:pd:main-global} holds, or
        \item\label{item:pd:main-gap:local}
        The local \cref{ass:pd:main-local,ass:pd:add-local} hold, and, for $d_N$ given in \cref{ass:pd:add-local}, we have the local initialisation bound
        \begin{equation}
            \label{eq:pd:main-gap:local:init}
            \frac{1}{2}\norm{ u^0-{\opt u^0}}_{\eta_{0}(\Precond_{0}+ \GrowthOp_{0})}^2
            \le
            d_N(\optu^{0:N}).
        \end{equation}
    \end{enumerate}
    Then $\Precond_k+ \GrowthOp_k$ and $M_k -\LossOp_k$ are positive semi-definite and with $\this \pdpredict \defeq P_k(\prevu)$,
    \begin{equation*}
        \sum_{k=1}^{N}\left( \GenGap^H_{k}(\thisu,\this\optu) + \frac{1}{2}\norm{\thisu-\this \pdpredict}_{\eta_{k}(\Precond_{k} - \LossOp_{k})}^2\right)
        \le
        \frac{1}{2}\norm{u^0-\optu^0}^2_{\eta_0(\Precond_0+ \GrowthOp_0)}
        + e_N^\Sigma(u^{0:N-1},\optu^{0:N}).
    \end{equation*}
\end{theorem}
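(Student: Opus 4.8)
The plan is to run the standard ``testing'' argument for primal--dual proximal splitting, with the nonconvexity handled entirely by the preceding lemmas. First I would discharge the hypotheses so that the per-frame three-point smoothness of $E_k$ is available at every index. In case \ref{item:pd:main-gap:global} this is immediate from \cref{ass:pd:main-global}\,\ref{item:pd:main-forwardstep}. In case \ref{item:pd:main-gap:local} the bound \eqref{eq:pd:main-gap:local:init} is precisely the initialisation hypothesis \eqref{eq:pd:aposteriori:init-cond} of \cref{lemma:pd:aposteriori}, which then delivers the growth-and-smoothness inequality \eqref{eq:pd:aposteriori:claim:growth} for all $1 \le k \le N$ (the point being that it certifies $x^k,\primalpredict^k \in B(\optx^k,\delta)$, where the local inequality of \cref{ass:pd:main-local}\,\ref{item:pd:main-forwardstep-local} applies). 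Either way I may assume $\iprod{\estgrad_k(\primalpredict^k)}{x^k-\optx^k} \ge E_k(x^k)-E_k(\optx^k) + \tfrac{\EkGrowth}{2}\norm{x^k-\optx^k}^2 - \tfrac{\EkLoss}{2}\norm{x^k-\primalpredict^k}^2 - \Err_k$. The positive semidefiniteness of $M_k+\GrowthOp_k$ and $M_k-\LossOp_k$ I read off from \cref{lemma:pd:positivity}.

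The core step is to test the implicit form \eqref{eq:ppext-pdps}. I take the subgradient selection $w_k \in \widetilde H_k(u^k)$ for which $w_k = -M_k(u^k-\pdpredict^k)$, and expand $\iprod{w_k}{u^k-\optu^k}$ blockwise. Convexity of $F_k$ and $G_k^*$ with strong-convexity factors $\gamma_k,\rho_k$, combined with the three-point inequality for $E_k$, bounds the $F_k+E_k$ and $G_k^*$ contributions below by the corresponding function-value differences together with $\tfrac12\norm{u^k-\optu^k}_{\GrowthOp_k}^2 - \tfrac12\norm{u^k-\pdpredict^k}_{\LossOp_k}^2 - \Err_k$. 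The bilinear terms $\iprod{K_k^*y^k}{x^k-\optx^k}-\iprod{K_k x^k}{y^k-\opty^k}$ recombine into $\iprod{K_k x^k}{\opty^k}-\iprod{K_k\optx^k}{y^k}$, which is exactly the cross term completing the Lagrangian gap $\GenGap_k^H(u^k,\optu^k)$. Hence $\iprod{M_k(u^k-\pdpredict^k)}{u^k-\optu^k} \le -\GenGap_k^H(u^k,\optu^k) - \tfrac12\norm{u^k-\optu^k}_{\GrowthOp_k}^2 + \tfrac12\norm{u^k-\pdpredict^k}_{\LossOp_k}^2 + \Err_k$.

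Next I apply the three-point (Pythagoras) identity $\iprod{u^k-\pdpredict^k}{u^k-\optu^k}_{M_k} = \tfrac12\norm{u^k-\pdpredict^k}_{M_k}^2 + \tfrac12\norm{u^k-\optu^k}_{M_k}^2 - \tfrac12\norm{\pdpredict^k-\optu^k}_{M_k}^2$ and rearrange to the per-frame estimate
\[
    \GenGap_k^H(u^k,\optu^k)
    + \tfrac12\norm{u^k-\pdpredict^k}_{M_k-\LossOp_k}^2
    + \tfrac12\norm{u^k-\optu^k}_{M_k+\GrowthOp_k}^2
    \le \tfrac12\norm{\pdpredict^k-\optu^k}_{M_k}^2 + \Err_k.
\]
I then multiply through by the testing weight $\eta_k$, which is what makes the prediction error telescope: by the definition \eqref{eq:pd:prediction-error} (and $M_k \ge 0$ from \cref{lemma:pd:positivity}, so $\fakenorm{\freevar}=\norm{\freevar}$) the weighted right-hand quadratic $\tfrac12\norm{\pdpredict^k-\optu^k}_{\eta_k M_k}^2$ equals $\epsilon_k^\dagger + \tfrac12\norm{u^{k-1}-\optu^{k-1}}_{\eta_{k-1}(M_{k-1}+\GrowthOp_{k-1})}^2$, creating a telescoping structure in the tracking quantity $\tfrac12\norm{u^k-\optu^k}_{\eta_k(M_k+\GrowthOp_k)}^2$.

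Finally I sum over $k=1,\dots,N$. The tracking terms telescope, leaving the index-$0$ term $\tfrac12\norm{u^0-\optu^0}_{\eta_0(M_0+\GrowthOp_0)}^2$ and the terminal term $\tfrac12\norm{u^N-\optu^N}_{\eta_N(M_N+\GrowthOp_N)}^2 \ge 0$, which I discard using $M_N+\GrowthOp_N \ge 0$; the accumulated $\epsilon_k^\dagger$ and $\eta_k\Err_k$ are exactly $e_N^\Sigma(u^{0:N-1},\optu^{0:N})$, and the sum of the $\eta_k(M_k-\LossOp_k)$ quadratics survives on the left. This gives the asserted inequality. The only genuinely delicate point is the nonconvex localisation — guaranteeing that the local three-point inequality is legitimately applicable at every frame — but that obstacle is already resolved inductively in \cref{lemma:pd:apriori,lemma:pd:aposteriori}; what remains in the theorem is the essentially convex testing-and-telescoping computation, whose only subtlety is the careful choice of the subgradient selection $w_k$ when testing the set-valued inclusion.
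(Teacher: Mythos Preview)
Your argument is correct and follows essentially the same approach as the paper: both discharge the local case via \cref{lemma:pd:aposteriori}, establish the per-frame lower bound $\iprod{\widetilde H_k(u^k)}{u^k-\optu^k} \ge \GenGap_k^H(u^k,\optu^k)+\tfrac12\norm{u^k-\optu^k}_{\GrowthOp_k}^2-\tfrac12\norm{u^k-\pdpredict^k}_{\LossOp_k}^2-\Err_k$ from convexity plus the three-point inequality, then test \eqref{eq:ppext-pdps}, apply the Pythagoras identity, multiply by $\eta_k$, use \eqref{eq:pd:prediction-error} to telescope, and discard the nonnegative terminal term. (Note that after multiplying by $\eta_k$ the gap term carries the weight $\eta_k$, exactly as in the paper's derivation.)
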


\begin{proof}
    \Cref{lemma:pd:positivity} proves the positive semi-definiteness of $\eta_k\Precond_k$ and $\eta_k (M_k -\LossOp_k)$. Consequently, also $\eta_k\Precond_k+ \GrowthOp_k$ is positive semi-definite.

    Fix, for now, $k \in \lbrace 1, \dots, N-1\rbrace$. Recall the expression for $\widetilde H_k$ from \eqref{eq:pd:m}. In the case \ref{item:pd:main-gap:local}, conditions of \cref{lemma:pd:aposteriori} hold, so we have that
    \begin{equation}
        \label{eq:pd:main-gap:three-point}
        \iprod{\estgrad_k(\this\primalpredict)}{\thisx-\this{\optx}}
        \ge
        E_k(\thisx)-E_k(\this{\optx})  - \frac{\EkLoss}{2}\norm{\thisx-\this\primalpredict}^2 + \frac{\EkGrowth}{2}\norm{\this\optx -\thisx}^2
        - \Err_k,
    \end{equation}
    whenever $\this\optx$ is a component of $\optu^{1:N} = ((\optx^1, \opty^1), \ldots, (\optx^N, \opty^N)) \in \PDpredictConstr_{1:N}$.
    The (strong) convexity of $F_k$ and $G_k^*$ with \eqref{eq:pd:main-gap:three-point} yield
    \begin{gather}
        \label{eq:pd:main-gap:0}
        \begin{aligned}[t]
            \iprod{\widetilde H_k(\thisx,\thisy)}{\thisu-\this{\optu}}
            &
            = \iprod{\subdiff{F_k(\thisx)}}{\thisx - \this\optx} +  \iprod{\estgrad_k(\this\primalpredict)}{\thisx - \this\optx}
            \\
            \MoveEqLeft[-1]
            + \iprod{\subdiff G_k^*(\thisy)}{\thisy-\this\opty}
            + \iprod{K_k^*\thisy}{\thisx-\this\optx}
            - \iprod{K_k\thisx}{\thisy - \this\opty}
            \\
            \MoveEqLeft[9]
            \ge \left(F_k(\thisx) - F_k(\this\optx) + \frac{\gamma_k +\EkGrowth}{2}\|\thisx - \this\optx\|^2\right)
            + \left(E_k(\thisx) - E_k(\this\optx) - \frac{\EkLoss}{2}\|\this\primalpredict - \thisx\|^2 \right)
            \\[-0.5ex]
            \MoveEqLeft[8]
            + \left(G_k^* (\thisy) - G_k^*(\this\opty) + \frac{\rho_k}{2}\|\thisy-\this\opty\|^2\right)
            - \iprod{K_k^*\thisy}{\this\optx} +  \iprod{K_k\thisx}{\this\opty}
            -\Err_k
            \\
            \MoveEqLeft[9]
            = \frac{1}{2}\norm{\thisu-\this\optu}_{\GrowthOp_k}^2 + \GenGap^H_k(\thisx,\this\optu)  - \frac{1}{2}\norm{\thisu-\this\pdpredict}_{\LossOp_k}^2
            -\Err_k.
        \end{aligned}
    \end{gather}
    The case \cref{item:pd:main-gap:global} assumes \eqref{eq:pd:main-gap:three-point} directly, thus \eqref{eq:pd:main-gap:0} holds globally.

    In both cases, for all $k = 1,\ldots, N$, we apply the linear “testing operator” $\iprod{\freevar}{\thisu-\this{\optu}}$ to both sides of \eqref{eq:ppext-pdps}. This followed by \eqref{eq:pd:main-gap:0} yields
    \[
        \Err_k
        \ge
        \iprod{\thisu-\this\pdpredict}{\thisu-\this{\optu}}_{\Precond_k}
        + \frac{1}{2}\norm{\thisu-\this\optu}_{ \GrowthOp_k}^2
        + \GenGap^H_k(\thisu,\this\optu)
        - \frac{1}{2}\|\thisu-\this\pdpredict\|_{\LossOp_k}^2.
    \]
    Multiplying this by $\eta_k$, using the Pythagoras' identity to convert the inner product into norms, then continuing with the definition \eqref{eq:pd:prediction-error} of the prediction errors, we obtain
    \begin{multline*}
        \frac{1}{2}\norm{\prevu-\prev{\optu}}_{\eta_{k-1}(\Precond_{k-1}+ \GrowthOp_{k-1})}^2
        + \epsilon_{k}^{\dagger}(\optu^{k-1:k}) + \eta_k\Err_k
        \ge
        \frac{1}{2}\norm{\this\pdpredict-\this{\optu}}_{\eta_k \Precond_k}^2
        + \eta_k \Err_k
        \\
        \ge
        \frac{1}{2}\norm{\thisu-\this{\optu}}_{\eta_{k}(\Precond_{k}+ \GrowthOp_{k})}^2
        + \eta_k\GenGap^H_{k}(\thisu,\this\optu)
        + \frac{1}{2}\norm{\thisu-\this\pdpredict}_{\eta_k(\Precond_k-\LossOp_k)}^2.
    \end{multline*}
    Summing over $k=1,\dots, N$ gives
    \[
        \begin{aligned}
            \frac{1}{2}\norm{u^0-\optu^0}^2_{\eta_0(\Precond_0+ \GrowthOp_0)}
            +
            e_N^\Sigma(u^{0:N-1}, \optu^{0:N})
            &
            \ge
            \frac{1}{2}\norm{u^N-{\optu^N}}_{\eta_{N}(\Precond_{N}+ \GrowthOp_{N})}^2
            \\[-1ex]
            \MoveEqLeft[-1]
            +
            \sum_{k=1}^{N}
            \Bigl(
                \eta_k\GenGap^H_{k}(\thisu,\this\optu)
                +
                \frac{1}{2}\norm{\thisu-\this \pdpredict}_{\eta_{k}(\Precond_{k} - \LossOp_{k})}^2
            \Bigr).
        \end{aligned}
    \]
    Since $\frac{1}{2}\norm{u^N-{\optu^N}}_{\eta_{N} \Precond_{N}+ \GrowthOp_{N}}^2 \ge 0$, the claim follows.
\end{proof}

The next corollary derives function value estimates from the preceding gap estimates.
Its proof is exactly as the proof of \cite[Theorem 2.6]{better-predict}.
The estimates are with respect to
\begin{gather}
    \label{eq:pd:tildeg}
    \mathring G_{1:N}(z^{1:N})
    \defeq
    \sum_{k=1}^N
    \sup_{\tilde y^{k} \in  \DpredictConstr_{k}}
    \bigl[
        \iprod{z^{k}}{\tilde y^{1:N}} - \eta_k G_{k}^*(\tilde y^{k})
    \bigr]
    \quad\text{in place of}\quad
    \nonumber
    G_{1:N}(y^{1:N}) \defeq \sum_{k=1}^{N} \eta_k G_{k}(\thisy),
\shortintertext{We also denote}
    \nonumber
    Q_{1:N}(x^{1:N}) \defeq \sum_{k=1}^{N} \eta_k [F_{k}  + E_{k}](\thisx),
    \qquad
    K_{1:N}x^{1:N} \defeq (\eta_1 K_{1} x^1, \ldots, \eta_N K_N x^N).
\end{gather}
If the dual comparison sets $\DpredictConstr_{1:N}$ were convex, then, recalling the formula $(f_1 + f_2)^* = f_1^* \infconv f_2^*$ for infimal convolutions (denoted $\infconv$) of convex functions $f_1$ and $f_2$, we would have
$
    \mathring G_{1:N} = G_{1:N} \infconv \delta_{\DpredictConstr_{1:N}}^*.
$
In general,
$\mathring G_{1:N} \le G_{1:N} \infconv \delta_{\DpredictConstr_{1:N}}^*$.
That is, $\mathring G_{1:N}$ is a “sub-infimal” convolution of $G_{1:N}$ and the temporal coupling. As $G_k \circ K_k$ is typically a total variation type regularisation functional, $\mathring G_{1:N} \circ K_{1:N}$  becomes a spatiotemporal regulariser with aspects of spatial total variation, and the temporal properties of the problem at hand.

\begin{corollary}
    \label{cor:pd:main}
    Suppose that the assumptions of \cref{thm:pd:main-gap} hold, with the initialisation bound \eqref{eq:pd:main-gap:local:init} for all $\optu^{0:N} \in \PDpredictConstr^{0:N}$ in the local convergence case \cref{item:pd:main-gap:local}. Then
    \begin{multline*}
        [Q_{1:N}(x^{1:N}) + \mathring G_{1:N}(K_{1:N}x^{1:N})]
        - \sup_{\optx^{1:N} \in  \PpredictConstr_{1:N}}
            [Q_{1:N}(\optx^{1:N}) + \mathring G_{1:N}(K_{1:N}\optx^{1:N})]
        \\
        \le
        \sup_{\optu^{0:N}\in  \PDpredictConstr_{0:N}} \bigg( \frac{1}{2}\norm{u^0-\optu^0}^2_{\eta_0\Precond_0+\Gamma_0} + c_N(\optx^{1:N}, y^{1:N}) + e_N^\Sigma(u^{0:N-1}, \optu^{0:N}) \bigg),
    \end{multline*}
    where $e_N^\Sigma(u^{0:N-1}, \optu^{0:N})$ is given by \eqref{eq:pd:cumulative-errors}, and the \term{comparison set solution discrepancy}
    \[
        c_N(\optx^{1:N}, y^{1:N}) \defeq  \inf_{\tilde y^{1:N} \in \DpredictConstr_{1:N}} \iprod{K_{1:N}\optx^{1:N}}{y^{1:N}-\tilde y^{1:N}} + G_{1:N}^*(\tilde y^{1:N}) - G_{1:N}^*(y^{1:N}).
    \]
\end{corollary}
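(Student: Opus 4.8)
The plan is to derive the claimed primal value estimate from the summed duality-gap bound of \cref{thm:pd:main-gap} by purely convex-analytic manipulations; the argument is structurally identical to \cite[Theorem 2.6]{better-predict}, the only substantive change being that the nonconvex gap estimate of \cref{thm:pd:main-gap} now supplies the inequality that the convex theory supplied there. Fix an arbitrary comparison sequence $\optu^{0:N}=(\optx^{0:N},\opty^{0:N}) \in \PDpredictConstr_{0:N}$. Since \cref{thm:pd:main-gap} (via \cref{lemma:pd:positivity}) guarantees $\eta_k(\Precond_k-\LossOp_k) \ge 0$, the squared-norm terms in its conclusion are non-negative and can be discarded, leaving $\sum_{k=1}^N \eta_k\GenGap^H_k(\thisu,\this\optu) \le \tfrac12\norm{u^0-\optu^0}_{\eta_0(\Precond_0+\GrowthOp_0)}^2 + e_N^\Sigma(u^{0:N-1},\optu^{0:N})$.

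Next I would expand the Lagrangian gap. Writing $\GenGap^H_k(\thisu,\this\optu)$ out and recognising $Q_{1:N}(x^{1:N}) = \sum_k \eta_k[F_k+E_k](x^k)$, the summed gap splits into the primal difference $Q_{1:N}(x^{1:N})-Q_{1:N}(\optx^{1:N})$, plus the \emph{iterate-primal} coupling $\sum_k \eta_k(\iprod{K_k x^k}{\opty^k}-G_k^*(\opty^k))$, minus the \emph{comparison-primal} coupling $\sum_k \eta_k(\iprod{K_k\optx^k}{y^k}-G_k^*(y^k))$. These two coupling sums are exactly what has to be matched to the sub-infimal convolution $\mathring G_{1:N}$ and the discrepancy $c_N$.

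For the comparison-primal coupling I would insert an arbitrary $\tilde y^{1:N}\in\DpredictConstr_{1:N}$, writing $\iprod{K_k\optx^k}{y^k}-G_k^*(y^k) = (\iprod{K_k\optx^k}{\tilde y^k}-G_k^*(\tilde y^k)) + (\iprod{K_k\optx^k}{y^k-\tilde y^k}+G_k^*(\tilde y^k)-G_k^*(y^k))$; since $\tilde y^k\in\DpredictConstr_k$ the first group is dominated frame-wise by the suprema defining $\mathring G_{1:N}(K_{1:N}\optx^{1:N})$, while the second group, infimised over $\tilde y^{1:N}\in\DpredictConstr_{1:N}$, is precisely $c_N(\optx^{1:N},y^{1:N})$, giving $\sum_k \eta_k(\iprod{K_k\optx^k}{y^k}-G_k^*(y^k)) \le \mathring G_{1:N}(K_{1:N}\optx^{1:N}) + c_N(\optx^{1:N},y^{1:N})$. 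For the iterate-primal coupling I would instead choose the dual part $\opty^{1:N}$ of the comparison sequence to (nearly) realise $\sup_{\tilde y^{1:N}\in\DpredictConstr_{1:N}}\sum_k \eta_k(\iprod{K_k x^k}{\tilde y^k}-G_k^*(\tilde y^k))$; by the relation $\mathring G_{1:N} \le G_{1:N}\infconv\delta^*_{\DpredictConstr_{1:N}}$ recalled before the corollary, this value dominates $\mathring G_{1:N}(K_{1:N}x^{1:N})$, so that $Q_{1:N}(x^{1:N}) + \sum_k \eta_k(\iprod{K_k x^k}{\opty^k}-G_k^*(\opty^k)) \ge Q_{1:N}(x^{1:N}) + \mathring G_{1:N}(K_{1:N}x^{1:N})$.

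Combining these three estimates in the discarded-gap inequality gives, for this $\optu^{0:N}$, the bound $[Q_{1:N}(x^{1:N})+\mathring G_{1:N}(K_{1:N}x^{1:N})] - [Q_{1:N}(\optx^{1:N})+\mathring G_{1:N}(K_{1:N}\optx^{1:N})] \le \tfrac12\norm{u^0-\optu^0}_{\eta_0(\Precond_0+\GrowthOp_0)}^2 + c_N(\optx^{1:N},y^{1:N}) + e_N^\Sigma(u^{0:N-1},\optu^{0:N})$. Finally I would estimate $Q_{1:N}(\optx^{1:N})+\mathring G_{1:N}(K_{1:N}\optx^{1:N}) \le \sup_{\optx^{1:N}\in\PpredictConstr_{1:N}}[Q_{1:N}(\optx^{1:N})+\mathring G_{1:N}(K_{1:N}\optx^{1:N})]$, since the primal part $\optx^{1:N}$ lies in $\PpredictConstr_{1:N}$, and pass to the supremum over $\optu^{0:N}\in\PDpredictConstr_{0:N}$ on the right, yielding the stated estimate. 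I expect the delicate step to be the iterate-primal coupling: reconciling the \emph{single} coherent comparison dual $\opty^{1:N}$ that \cref{thm:pd:main-gap} forces with the frame-wise independent suprema defining $\mathring G_{1:N}$, i.e.\ keeping the direction of the sub-infimal-convolution inequality $\mathring G_{1:N}\le G_{1:N}\infconv\delta^*_{\DpredictConstr_{1:N}}$ correct. The comparison-primal coupling and the accounting of the dual discrepancy through $c_N$ are then routine, and, as noted, follow verbatim the convex argument of \cite[Theorem 2.6]{better-predict}.
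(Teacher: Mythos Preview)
Your proposal is correct and matches the paper's approach: the paper's proof is literally the one-line reduction ``follows from \cref{thm:pd:main-gap} by the exact same proof as \cite[Theorem~2.6]{better-predict}, with $e_N^\Sigma$ in place of $e_N$,'' and your sketch faithfully reconstructs that convex-analytic derivation---drop the non-negative $\norm{\cdot}_{\eta_k(M_k-\LossOp_k)}$ term, expand the summed gap, route the comparison-primal coupling through $\mathring G_{1:N}(K_{1:N}\optx^{1:N})$ and $c_N$, and pass to suprema. You also correctly flag the iterate-primal coupling as the only nonroutine step; that is exactly where the ``sub-infimal convolution'' structure of $\mathring G_{1:N}$ and the relation to $G_{1:N}\infconv\delta^*_{\DpredictConstr_{1:N}}$ enter in \cite{better-predict}.
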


\begin{proof}
    The claim follows from \cref{thm:pd:main-gap}, by the exact same proof as that of \cite[Theorem 2.6]{better-predict}, with $e_N^\Sigma(u^{0:N-1},\optu^{0:N})$ in place of $e_N(u^{0:N-1},\optu^{0:N})$ of the latter.
\end{proof}

\begin{remark}[Comparison set solution discrepancy]
    According to \cite[Remark 2.7 and Section 3]{better-predict}, $c_N \le 0$ when $G_k \circ K_k=\alpha \norm{\grad\freevar}_{2,1}$ is the total variation, the dual initialisation achieves the total variation (i.e., $\iprod{y^0}{x^0}=\alpha \norm{\grad x^0}_{2,1}$ with $\norm{y^0}_{2,\infty} \le \alpha$), and the dual predictor is total variation preserving.
    Examples of total variation preserving predictors are provided in \cite{better-predict}.
\end{remark}

\section{The online EIT problem}
\label{sec:eit}

We now treat the online EIT problem, which we describe in \cref{sec:eit-descr}.
To apply \cref{cor:bt:Ebounds}, we need to prove the smoothness inequalities of \cref{ass:pd:main-global,ass:pd:main-local}. Based on auxiliary results from \cref{app:dataterm}, we do this in \cref{ssec:tpi} after first proving the necessary second-order differentiability of the CEM solution operator in \cref{sec:eit-diff}.

\subsection{Problem description}
\label{sec:eit-descr}

To model the dynamic EIT problem as the optimisation problem \eqref{eq:pd:problem}, we now take
\begin{equation}
    \label{eq:eit:functionals}
    \begin{aligned}[t]
        E_k(\conductivity) & \defeq \frac{1}{2}\sum_{j=1}^{N_2}\norm{\WOp (I(\conductivity,U^{j,k}) - \EITmeas^{j,k})}_2^2,
        \\[-1ex]
        F_k(\conductivity) & \defeq \delta_{[\conductivity_m,\conductivity_M]}(\conductivity),
        \quad\text{and}\quad
        G_k(K_k\conductivity) \defeq \alpha \norm{\conductivity}_{2,1},
    \end{aligned}
\end{equation}
where $\WOp \in \R^{(N_1-1) \times (N_1-1)}$ is a data precision matrix, modelling noise characteristics, and $\alpha > 0$ is the regularisation parameter.
The conductivity $x$ is bounded between $0 < \conductivity_m < \conductivity_M$. The term $\EITmeas^{j,k} \in \R^{N_1}$ is a vector of measurements corresponding to the electrode potentials $U^{j,k} \in \R^{N_1}$ at time instance $k$. The currents $I_i^{j,k} \defeq I_i(\conductivity,U^{j,k})$ for $i=1,\ldots,N_1$ are obtained by solving $(u^{j,k}, I_1, \dots, I_{N_1})$ from weak versions of the Complete Eletrode Model (CEM) equations
\begin{subequations}\label{eq:eit:cem}
    \begin{alignat}{4}
        \label{eq:eit:cem1}
        \snabla \cdot (\conductivity^k(\xi) \snabla u^{j,k}(\xi)) &=0  &&
        \quad\text{for } \xi\in \Omega, \\
        \label{eq:eit:cem2}
        u^{j,k}(\xi) + \zeta_{i} \conductivity^k(\xi){\snabla u^{j,k}(\xi)} \cdot{\nu(\xi)} &= U^{j,k}_{i}   &&
        \quad\text{for } \xi\in \partial \Omega_{e_{i}}\text{,}\;\; i=1,\dots,N_1, \\
        \label{eq:eit:cem4}
        \conductivity^k(\xi){\snabla u^{j,k}(\xi)}\cdot{\nu(\xi)} &=0   &&
        \quad\text{for } \xi\in \partial \Omega \setminus (\partial \Omega_{e_1} \union \ldots \union \partial \Omega_{e_{N_1}}), \\[-1ex]
        \label{eq:eit:cem3}
        \int_{\partial \Omega_{e_i}} \conductivity^k(\xi) {\snabla u^{j,k}(\xi)}\cdot{\nu(\xi)}\: \dif\BoundMeas &= -I^{j,k}_{i} &&
        \quad\text{for } i=1,\dots,N_1,
    \end{alignat}
\end{subequations}
where $u^{j,k}$ is the inner potential and $\zeta_{i}$ is the contact impedance between the electrode $i$ and the medium inside the domain $\Omega \subset \R^d$. We focus on this “potential-to-current” model, as it aligns with state-of-the-art EIT measurement devices \cite{jauhiainen2020ripgn}. This model was originally introduced in \cite{voss2018imaging}, and subsequently used in e.g. \cite{voss2019three,jauhiainen2020non,jauhiainen2020ripgn}.
The conventional “current-to-potential” model solves the potentials given the electric currents.

We assume that $x \in L^\infty(\Omega)$ for a bounded Lipschitz domain\footnote{The domain $\Omega$ is unrelated to the operator $\LossOpGlobal_k$ of \cref{sec:pd}.} $\Omega \subset \R^n$.
Given the discretisation of $x$ in the numerical experiments of \cref{sec:numerical}, the results of this section remain compatible with the theory of \cref{sec:pd}.
For a given $x$, we write
\begin{gather}
    \label{eq:eit:tuples}
    \CS{\conductivity} = (\CSf{\conductivity},\CSb{\conductivity})
    \in \Hs\defeq H^1(\Omega) \oplus \R^{N_1}
\intertext{for the electrical potential and electrode currents $(u,I)$ that solve \eqref{eq:eit:cem} weakly, i.e.,}
    \label{eq:eit:bilin}
    B_\conductivity(\CS{\conductivity},w) = L(w)
    \quad\text{for all}\quad
    w=(v,V) \in \Hs.
\shortintertext{where the bilinear form $B_x$ reads}
    \nonumber
    B_\conductivity(\CS{\conductivity},w)
    = \int_{\Omega} \conductivity \snabla \CSf{\conductivity} \cdot \snabla v \dif\Lmeas + \sum_{i=1}^{N_1} \frac{1}{\zeta_i}\int_{\partial \Omega_{e_i}} \CSf{\conductivity} (v - V_i)\dif\BoundMeas
        + \sum_{i=1}^{N_1} (\CSb{\conductivity})_i V_i,
\shortintertext{and the linear form $L$ is}
    \nonumber
    L(w) = \sum_{i=1}^{N_1} \frac{1}{\zeta_i} \int_{\partial \Omega_{e_i}} U_i(v-V_i) \dif\BoundMeas.
\shortintertext{We equip $\Hs$ with the norm}
    \label{eq:eit:H-norm}
    \norm{w}^2_{\Hs}\defeq  \norm{v }_{H^1}^2 + \norm{V}_{2}^2
    \quad\text{for}\quad w \in \Hs.
\end{gather}

\subsection{Differentiability of the EIT solution operator}
\label{sec:eit-diff}

The problem \eqref{eq:eit:bilin} is well-posed \cite{somersalo1992existence}: see the “potential-to-current” model in \cite{jauhiainen2021nonplanar}.
The first order differentiability of the reversed “current-to-potential” model has, moreover, been extensively discussed in earlier works, e.g., \cite{jin2010analysis,darde2022contact}.
We will prove the first order differentiability of the solution operator of the potential-to-current model. This, to our knowledge, has not been previously proven. We then show the Lipschitz boundedness of this derivative and use it to prove second order differentiability in $L^\infty$. This has only been shown in the finite dimensional case for the current-to-potential model.

For notational clarity, we write $\dCS{\conductivity}{}$, i.e., $h \mapsto \dCS{\conductivity}{h}$ for the (Fréchet) derivative of $x \mapsto w_{\conductivity}$ at $\conductivity \in L^\infty(\Omega)$, when it exists. The notation for $v_{\conductivity}$ is analogous.
Thus, $h \mapsto \dCS{\conductivity}{h} \in \linear(L^\infty(\Omega); \Hs)$.
Likewise, $\ddCS{\conductivity}{}{}$, i.e., $(h_1, h_2) \mapsto \ddCS{\conductivity}{h_1}{h_2}$ is the second (Fréchet) derivative of $x \mapsto w_{\conductivity}$ at $\conductivity$, when it exists.
For clarity, we write $\snabla w_{\conductivity}$ for the weak gradient of $w_{\conductivity}$, and $\snabla \dCS{\conductivity}{h}$ for $\snabla[\xi \mapsto \dCS{\conductivity}{h}(\xi)]$, i.e., $\snabla$ is always a (spatial) gradient with respect to $\xi \in \Omega$.
Given $\conductivity \in [\conductivity_m,\conductivity_M]$, a.e., and that the domain $\Omega$ is Lipschitz, the solutions $\CS{\conductivity}: X \to \Hs$ are continuous in $L^p$ for any $p \ge 1$ \cite[Remark 2.8]{jauhiainen2022mumford}. Moreover, under the domain scaling condition $\zeta^{-1}_k\abs{\partial \Omega_{e_k}} \le 1$, we have
\begin{align}
    \label{eq:eit:coercivity}
    C_1B_\conductivity(w,w) &\ge \norm{w}^2_{\Hs}
    \quad\text{for all}\quad w \in \Hs\quad\text{and}
    \\
    \label{eq:eit:boundedness}
    \norm{\CS{\conductivity}}_{\Hs} &\le C_2\norm{U}_{2}\quad\text{for any } \conductivity \in [\conductivity_m,\conductivity_M].
\end{align}
For solutions $\CS{\conductivity} \in \Hs$ to \eqref{eq:eit:bilin}, the scaling condition $\zeta^{-1}_k\abs{\partial \Omega_{e_k}} \le 1$ can be removed (see \cite[Lemma 2.6 and Theorem 2.7]{jauhiainen2022mumford}), and the coefficients then have values
\[
    C_1 = 2{C_\Omega}(\Lambda \min \lbrace 1, \conductivity_m \rbrace)^{-1}
    \quad\text{and}\quad
    C_2 =\sqrt{2}{C_1}
    \quad\text{for}\quad
    C_\Omega =(\zeta_m/e_M)^{-\frac{N-2}{N-1}}
\]
with $\zeta_m = \min_k \zeta_k$ and $e_M = \max_k \abs{\partial \Omega_{e_k}}$.\footnote{We know that $\Lambda$ is a finite positive constant but the exact value is unknown. For more details, see \cite[Lemma 3.2]{somersalo1992existence} for current-to-potential model and potential-to-current model \cite[Lemma 2]{jauhiainen2020non}.}
Throughout, we work with:

\begin{assumption}
    \label{ass:eit}
    $0 < x_m < x_M < \infty$ and $\Omega \subset \R^d$ is a Lipschitz domain.
\end{assumption}

The next corollary is a simple application of this and the earlier results of \cite{jauhiainen2022mumford}.

\begin{corollary}
    \label{cor:eit:solbound}
    Let \cref{ass:eit} hold.
    Then for $C_3 \defeq C_1C_2\norm{U}_2$ we have
    \[
        \norm{\CS{\conductivity_2}- \CS{\conductivity_1}}_\Hs \le C_3\norm{\conductivity_2-\conductivity_1}_{\infty}
        \quad\text{for all}\quad x_1, x_2 \in [x_m, x_M].
    \]
\end{corollary}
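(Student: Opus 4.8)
The plan is to exploit the \emph{affine} dependence of the bilinear form $B_\conductivity$ on the coefficient $\conductivity$: inspecting \eqref{eq:eit:bilin}, only the volume term $\int_\Omega \conductivity\, \snabla(\freevar)\cdot\snabla(\freevar)\dif\Lmeas$ involves $\conductivity$, while the electrode boundary terms and the current coupling are $\conductivity$-independent. Writing $\CS{\conductivity_i}=:w_i=(v_i,V_i)$ for $i=1,2$, both solve \eqref{eq:eit:bilin} against the same right-hand side $L$, so for every test $w=(v,V)\in\Hs$ we have $B_{\conductivity_2}(w_2,w)=L(w)=B_{\conductivity_1}(w_1,w)$. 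Subtracting $B_{\conductivity_2}(w_1,w)$ from both of these equal expressions, and using that $B_{\conductivity_1}(w_1,w)-B_{\conductivity_2}(w_1,w)$ reduces to the single volume term, gives the key identity
\[
    B_{\conductivity_2}(w_2-w_1, w) = B_{\conductivity_1}(w_1,w)-B_{\conductivity_2}(w_1,w) = \int_\Omega (\conductivity_1-\conductivity_2)\, \snabla v_1 \cdot \snabla v \dif\Lmeas .
\]

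Next I would test this identity with $w=w_2-w_1$, whose $H^1$-component is $v_2-v_1$. The coercivity \eqref{eq:eit:coercivity}, valid uniformly for $\conductivity_2\in[\conductivity_m,\conductivity_M]$ since $C_1$ depends only on $\conductivity_m$, bounds the left-hand side below by $\tfrac{1}{C_1}\norm{w_2-w_1}_\Hs^2$, so that
\[
    \tfrac{1}{C_1}\norm{w_2-w_1}_\Hs^2 \le \int_\Omega (\conductivity_1-\conductivity_2)\, \snabla v_1 \cdot \snabla(v_2-v_1) \dif\Lmeas .
\]
For the right-hand side I would pull out $\norm{\conductivity_2-\conductivity_1}_\infty$, apply the Cauchy--Schwarz inequality to the two gradients, and bound $\norm{\snabla v_1}_{2}\le\norm{v_1}_{H^1}\le\norm{w_1}_\Hs$ and $\norm{\snabla(v_2-v_1)}_{2}\le\norm{w_2-w_1}_\Hs$. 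The a priori bound \eqref{eq:eit:boundedness} then yields $\norm{w_1}_\Hs\le C_2\norm{U}_2$, so the right-hand side is at most $\norm{\conductivity_2-\conductivity_1}_\infty\, C_2\norm{U}_2\,\norm{w_2-w_1}_\Hs$. Cancelling one factor of $\norm{w_2-w_1}_\Hs$ (the case $w_1=w_2$ being trivial) and multiplying through by $C_1$ produces exactly $\norm{w_2-w_1}_\Hs\le C_1C_2\norm{U}_2\,\norm{\conductivity_2-\conductivity_1}_\infty = C_3\norm{\conductivity_2-\conductivity_1}_\infty$.

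This is a standard coefficient-stability argument, and I expect no serious obstacle. The only points needing care are the clean isolation of the $\conductivity$-dependent volume term, so that the subtraction leaves precisely $\int_\Omega(\conductivity_1-\conductivity_2)\,\snabla v_1\cdot\snabla v\dif\Lmeas$, and the uniformity of the coercivity constant $C_1$ over the admissible interval $[\conductivity_m,\conductivity_M]$; both are already guaranteed by \eqref{eq:eit:bilin}--\eqref{eq:eit:boundedness} together with \cref{ass:eit}.
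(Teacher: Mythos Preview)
Your proposal is correct and follows essentially the same argument as the paper's proof. The only cosmetic difference is that you apply coercivity with $B_{\conductivity_2}$ and end up bounding $\norm{\snabla v_{\conductivity_1}}$, whereas the paper applies coercivity with $B_{\conductivity_1}$ and bounds $\norm{\snabla v_{\conductivity_2}}$; since both $\conductivity_1,\conductivity_2\in[\conductivity_m,\conductivity_M]$, this symmetry makes no difference.
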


\begin{proof}
    Using Hölder inequality, \cref{eq:eit:coercivity,eq:eit:boundedness}, recalling that we write $w_{x_1}=(v_{x_1}, V_{x_1})$ and $w_{x_2}=(v_{x_2}, V_{x_2})$, we estimate
    \begin{multline*}
        \norm{\CS{\conductivity_2} - \CS{\conductivity_1}}^2_{\Hs}
        \le C_1 B_{\conductivity_1}(\CS{\conductivity_2} - \CS{\conductivity_1}, \CS{\conductivity_2} - \CS{\conductivity_1})
        \le C_1\int_\Omega (\conductivity_1-\conductivity_2)(\snabla \CSf{\conductivity_2} \cdot \snabla (\CSf{\conductivity_2} - \CSf{\conductivity_1})) \dif \Lmeas
        \\
        \le C_1\norm{\conductivity_1-\conductivity_2}_\infty \norm{\CS{\conductivity_2}}_\Hs \norm{\CS{\conductivity_2} - \CS{\conductivity_1}}_{\Hs}
        \le C_1C_2 \norm{U}_{2}\norm{\conductivity_2-\conductivity_1}_\infty \norm{\CS{\conductivity_2} - \CS{\conductivity_1}}_{\Hs}.
    \end{multline*}
    For a detailed derivation of the second inequality, see \cite[equation (20)]{jauhiainen2022mumford})
\end{proof}

As promised, we can now show first-order differentiability in $L^\infty(\Omega)$.

\begin{lemma}
    \label{lemma:eit:frechet-first}
    Let \cref{ass:eit} hold.
    Then the solution map $x \mapsto \CS{\conductivity}: L^\infty(\Omega)\to \Hs$ of \eqref{eq:eit:bilin} is Fréchet differentiable at any $\conductivity \in [x_m, x_M]$ with the Fréchet derivative at $x$, i.e., the map $(h \mapsto \dCS{\conductivity}{h}) \in \linear(L^\infty(\Omega); \Hs)$ norm-bounded by $C_3=C_1C_2\norm{U}_2$ and satisfying
    \begin{gather}
        \label{eq:eit:frechet-first:weak-eq}
        B_\conductivity( \dCS{\conductivity}{h},w) = - \int_\Omega h \snabla \CSf{\conductivity} \cdot \snabla v \dif \Lmeas
    \shortintertext{and}
        \nonumber
        \norm{\CS{\conductivity+h} -\CS{\conductivity} - \dCS{\conductivity}{h}}_\Hs \le C_1C_3 \norm{h}_{\infty}^2
        \quad\text{for all}\quad
        h\in L^\infty(\Omega) \ \text{and}\ w=(v,V) \in \Hs.
    \end{gather}
\end{lemma}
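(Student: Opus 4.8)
The plan is to read off the candidate derivative directly from the weak equation \eqref{eq:eit:frechet-first:weak-eq}, verify that it defines a bounded linear operator, and then control the remainder using coercivity \eqref{eq:eit:coercivity} together with the Lipschitz bound of \cref{cor:eit:solbound}. The crucial structural fact is that $B_\conductivity$ depends on $\conductivity$ only through its first term, so that for any $w=(v_1,V_1),\tilde w=(v_2,V_2) \in \Hs$ we have $B_{\conductivity+h}(w,\tilde w) = B_\conductivity(w,\tilde w) + \int_\Omega h\, \snabla v_1 \cdot \snabla v_2 \dif\Lmeas$.

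First I would define, for fixed $\conductivity \in [x_m,x_M]$ and each direction $h \in L^\infty(\Omega)$, the element $\dCS{\conductivity}{h} \in \Hs$ as the solution of \eqref{eq:eit:frechet-first:weak-eq}. The right-hand side $w=(v,V) \mapsto -\int_\Omega h\, \snabla \CSf{\conductivity} \cdot \snabla v \dif\Lmeas$ is a bounded linear functional on $\Hs$ of norm at most $\norm{h}_\infty\norm{\CS{\conductivity}}_\Hs$ by Cauchy--Schwarz, and $B_\conductivity$ is bounded (since $\conductivity \le x_M$ and by the trace theorem) and coercive by \eqref{eq:eit:coercivity}; hence Lax--Milgram provides a unique such $\dCS{\conductivity}{h}$. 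Because the right-hand side is linear in $h$ and the solution is unique, $h \mapsto \dCS{\conductivity}{h}$ is linear. Testing \eqref{eq:eit:frechet-first:weak-eq} with $w=\dCS{\conductivity}{h}$, then using \eqref{eq:eit:coercivity}, Cauchy--Schwarz, and finally \eqref{eq:eit:boundedness}, gives $\norm{\dCS{\conductivity}{h}}_\Hs^2 \le C_1 B_\conductivity(\dCS{\conductivity}{h},\dCS{\conductivity}{h}) \le C_1\norm{h}_\infty\norm{\CS{\conductivity}}_\Hs\norm{\dCS{\conductivity}{h}}_\Hs$, whence $\norm{\dCS{\conductivity}{h}}_\Hs \le C_1\norm{h}_\infty\norm{\CS{\conductivity}}_\Hs \le C_3\norm{h}_\infty$. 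Thus the operator norm of $h \mapsto \dCS{\conductivity}{h}$ is at most $C_3$.

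The main step is the quadratic remainder estimate. Setting $r \defeq \CS{\conductivity+h} - \CS{\conductivity} - \dCS{\conductivity}{h}$ and expanding $B_\conductivity(r,w)$ for a test $w=(v,V) \in \Hs$, I would substitute the three defining relations $B_\conductivity(\CS{\conductivity},w)=L(w)$, the equation \eqref{eq:eit:frechet-first:weak-eq} for $\dCS{\conductivity}{h}$, and $B_\conductivity(\CS{\conductivity+h},w) = B_{\conductivity+h}(\CS{\conductivity+h},w) - \int_\Omega h\, \snabla \CSf{\conductivity+h}\cdot\snabla v \dif\Lmeas = L(w) - \int_\Omega h\, \snabla \CSf{\conductivity+h}\cdot\snabla v \dif\Lmeas$, obtained from the structural identity above. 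After cancellation this yields the clean remainder equation $B_\conductivity(r,w) = -\int_\Omega h\, \snabla(\CSf{\conductivity+h}-\CSf{\conductivity})\cdot\snabla v \dif\Lmeas$ for all $w=(v,V) \in \Hs$. Testing with $w=r$, applying coercivity \eqref{eq:eit:coercivity} and Cauchy--Schwarz gives $\norm{r}_\Hs \le C_1\norm{h}_\infty\norm{\CS{\conductivity+h}-\CS{\conductivity}}_\Hs$, and \cref{cor:eit:solbound} bounds the last factor by $C_3\norm{h}_\infty$, producing $\norm{r}_\Hs \le C_1 C_3\norm{h}_\infty^2$. This is the stated estimate; since it is $o(\norm{h}_\infty)$, it simultaneously certifies Fréchet differentiability with derivative $\dCS{\conductivity}{\freevar}$.

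I expect the only real obstacle to be the careful bookkeeping of the $\conductivity$-dependence of $B_\conductivity$ when deriving the remainder equation --- one must switch between $B_{\conductivity+h}$ and $B_\conductivity$ exactly once, for the $\CS{\conductivity+h}$ term, so that the perturbation term $\int_\Omega h\, \snabla\CSf{\conductivity+h}\cdot\snabla v$ survives and combines with the $\dCS{\conductivity}{h}$ term into the gradient difference $\snabla(\CSf{\conductivity+h}-\CSf{\conductivity})$. Everything else is a routine application of Lax--Milgram, the coercivity estimate \eqref{eq:eit:coercivity}, the solution bound \eqref{eq:eit:boundedness}, and the already-proven \cref{cor:eit:solbound}.
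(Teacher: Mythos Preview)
Your proposal is correct and follows essentially the same approach as the paper: define the candidate via Lax--Milgram from \eqref{eq:eit:frechet-first:weak-eq}, verify linearity and the $C_3$ bound by testing with $w=\dCS{\conductivity}{h}$, then derive the remainder equation $B_\conductivity(r,w)=-\int_\Omega h\,\snabla(\CSf{\conductivity+h}-\CSf{\conductivity})\cdot\snabla v\dif\Lmeas$ and combine coercivity with \cref{cor:eit:solbound}. The only detail you omit that the paper makes explicit is restricting to $\norm{h}_\infty \le \conductivity_m/2$ so that $\conductivity+h$ remains an admissible conductivity and $\CS{\conductivity+h}$ is well-defined, but this is harmless since Fr\'echet differentiability only concerns small $h$.
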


\begin{proof}
    Suppose that $\CS{\conductivity}$ is a solution to \eqref{eq:eit:bilin} with $\conductivity$. Now, with respect to $w=(v,V)$, the left-hand side of \eqref{eq:eit:frechet-first:weak-eq} is coercive by \eqref{eq:eit:coercivity} and the right-hand side is clearly linear and bounded.
    Thus the Lax-Milgram theorem establishes the existence of $\dCS{x}{h}$.
    We will show that the mapping $h \mapsto \dCS{x}{h}$ is bounded and linear with respect to $h$, and then proceed to confirm that it is, indeed, the Fréchet derivative, i.e., that
    \begin{equation}
        \label{eq:eit:frechet-first:lim}
        \lim_{\norm{h}_\infty \to 0}\frac{\norm{\CS{\conductivity_0+h} - \CS{\conductivity_0} - \dCS{\conductivity_1}{h}}_{\Hs}}{\norm{h}_{L^\infty(\Omega)}} = 0.
    \end{equation}

    To see linearity, take $a,b \in \R$ and notice from \eqref{eq:eit:frechet-first:weak-eq} that, due to the linearity of the right-hand side with respect to $h$, both $\dCS{\conductivity}{ah_1 + bh_2}$ and $a\dCS{\conductivity}{h_1}+ b\dCS{\conductivity}{h_2}$ solve \eqref{eq:eit:frechet-first:weak-eq}. Hence, due the well-posedness \eqref{eq:eit:frechet-first:weak-eq}, $\dCS{\conductivity}{ah_1 + bh_2},w$ and $a\dCS{\conductivity}{h_1}+ b\dCS{\conductivity}{h_2}$ must be equal.

    To show boundedness, we observe that by the Hölder inequality and \cref{eq:eit:coercivity,eq:eit:frechet-first:weak-eq,eq:eit:boundedness},
    \begin{equation}
        \label{eq:eit:frechet-first:bound}
        \norm{\dCS{\conductivity}{h}}_\Hs^2
        \le
        C_1\abs{B_\conductivity(\dCS{\conductivity}{h},\dCS{\conductivity}{h})}
        =
        \adaptabs{\int_\Omega h \snabla \CSf{\conductivity} \cdot \snabla \dCS{\conductivity}{h}  \dif \Lmeas}
        \le
        C_1C_2\norm{U}_{2}\norm{h}_\infty\norm{\dCS{\conductivity}{h}}_\Hs.
    \end{equation}

    Finally, to confirm \eqref{eq:eit:frechet-first:lim}, suppose that $\norm{h}_\infty \le \conductivity_m/2$. This ensures that the solution $\CS{\conductivity+h}$ to \eqref{eq:eit:bilin} at $\conductivity+h$ exists since $0<\conductivity_m/2 \le \conductivity+h \le \conductivity_M + \conductivity_m/2$. By \eqref{eq:eit:coercivity}
    \begin{equation}
        \label{eq:eit:frechet-first:bilin}
        \norm{\CS{\conductivity+h} -\CS{\conductivity} - \dCS{\conductivity}{h}}^2_\Hs \le C_1 {B_\conductivity(\CS{\conductivity+h} -\CS{\conductivity} - \dCS{\conductivity}{h},\CS{\conductivity+h} -\CS{\conductivity} - \dCS{\conductivity}{h})}.
    \end{equation}
    Moreover, for any $w \in \Hs$,
    \begin{gather*}
        \begin{aligned}[t]
            B_\conductivity(\CS{\conductivity+h} -\CS{\conductivity},w)
            &
            = B_\conductivity(\CS{\conductivity+h},w) - B_\conductivity(\CS{\conductivity},w)
            = B_\conductivity(\CS{\conductivity+h},w) - L(w)
            \\
            &
            = B_\conductivity(\CS{\conductivity+h},w) - B_{\conductivity+h}(\CS{\conductivity+h},w)
            = -\int_\Omega h \snabla \CSf{\conductivity+h} \cdot \snabla v \dif \Lmeas,
        \end{aligned}
        \\[-1ex]
    \shortintertext{hence}
        B_\conductivity(\CS{\conductivity+h} -\CS{\conductivity} - \dCS{\conductivity}{h},w)
        =
        B_\conductivity(\CS{\conductivity+h} -\CS{\conductivity},w) - B_\conductivity(\dCS{\conductivity}{h},w)
        =
        -\int_\Omega h \snabla (\CSf{\conductivity+h} - \CSf{\conductivity}) \cdot \snabla v \dif \Lmeas.
    \end{gather*}
    Taking $w = \CS{\conductivity+h} -\CS{\conductivity} - \dCS{\conductivity}{h}$ and using \cref{cor:eit:solbound} we thus estimate
    \begin{equation}
        \label{eq:eit:frechet-first:lim0}
        \begin{aligned}[t]
            B_\conductivity(w, w)
            &
            = -{\int_\Omega h \snabla (\CSf{\conductivity+h} - \CSf{\conductivity}) \cdot \snabla (\CSf{\conductivity+h} -  \CSf{\conductivity} - \dCSf{\conductivity}{h})\dif \Lmeas}
            \\[-0.5ex]
            &
            \le \norm{h}_{\infty}\adaptabs{\int_\Omega \snabla (\CSf{\conductivity+h} - \CSf{\conductivity}) \cdot \snabla (\CSf{\conductivity+h} -  \CSf{\conductivity} - \dCSf{\conductivity}{h})\dif \Lmeas}
            \\
            &
            \le \norm{h}_{\infty}\norm{\CS{\conductivity+h} - \CS{\conductivity}}_\Hs \norm{\CS{\conductivity+h} - \CS{\conductivity} - \dCS{\conductivity}{h}}_\Hs
            \\
            &
            \le \norm{h}_{\infty}C_3\norm{h}_{\infty} \norm{\CS{\conductivity+h} - \CS{\conductivity} - \dCS{\conductivity}{h}}_\Hs.
        \end{aligned}
    \end{equation}
    Combining \eqref{eq:eit:frechet-first:bilin} and \eqref{eq:eit:frechet-first:lim0} yields
    $
        \norm{\CS{\conductivity+h} -\CS{\conductivity} - \dCS{\conductivity}{h}}_\Hs \le C_1C_3 \norm{h}_{\infty}^2
    $,
    which proves \eqref{eq:eit:frechet-first:lim}.
\end{proof}

The following lemma shows that the Fréchet derivative of the solution map is Lipschitz.
This will be needed to show the second-order differentiability.

\begin{lemma}
    \label{eq:eit:lipschitz-first}
    Let \cref{ass:eit} hold.
    Then, for any given $h \in L^\infty(\Omega)$, the map $\conductivity \mapsto \dCS{\conductivity}{h}: L^\infty(\Omega)\to \Hs$ is Lipschitz with constant $2C_1C_3\norm{h}_\infty$, that is
    \[
        \norm{\dCS{\conductivity_1}{h}-\dCS{\conductivity_2}{h}}_\Hs
        \le
        2C_1C_3\norm{h}_\infty \norm{\conductivity_2-\conductivity_1}_{\infty}
        \quad\text{for all}\quad \conductivity_1,\conductivity_2 \in [x_m, x_M].
    \]
\end{lemma}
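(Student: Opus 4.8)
The plan is to exploit the defining weak equation \eqref{eq:eit:frechet-first:weak-eq} for the derivative established in \cref{lemma:eit:frechet-first}, applied at both $\conductivity_1$ and $\conductivity_2$, and to control the difference through the coercivity \eqref{eq:eit:coercivity}. Write $d \defeq \dCS{\conductivity_1}{h}-\dCS{\conductivity_2}{h}$ and let $v_d$ denote its $H^1$-component. Since $\norm{d}_\Hs^2 \le C_1 B_{\conductivity_1}(d,d)$ by coercivity, it suffices to bound the single coercive form $B_{\conductivity_1}(d,d)$.

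The key algebraic step is to evaluate $B_{\conductivity_1}(d,w)$ for arbitrary $w=(v,V)$. Writing $B_{\conductivity_1}(d,w) = B_{\conductivity_1}(\dCS{\conductivity_1}{h},w) - B_{\conductivity_1}(\dCS{\conductivity_2}{h},w)$, I would handle the second term by adding and subtracting $B_{\conductivity_2}(\dCS{\conductivity_2}{h},w)$. Because $B_{\conductivity}$ depends on $\conductivity$ only through its leading term $\int_\Omega \conductivity\,\snabla(\freevar)\cdot\snabla(\freevar)\dif\Lmeas$, the resulting discrepancy is $B_{\conductivity_1}(\dCS{\conductivity_2}{h},w) - B_{\conductivity_2}(\dCS{\conductivity_2}{h},w) = \int_\Omega(\conductivity_1-\conductivity_2)\snabla\dCSf{\conductivity_2}{h}\cdot\snabla v\dif\Lmeas$. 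Invoking \eqref{eq:eit:frechet-first:weak-eq} at $\conductivity_1$ and at $\conductivity_2$ for the two remaining terms, this gives
\[
    B_{\conductivity_1}(d,w) = -\int_\Omega h\,\snabla(\CSf{\conductivity_1}-\CSf{\conductivity_2})\cdot\snabla v\dif\Lmeas + \int_\Omega(\conductivity_2-\conductivity_1)\snabla\dCSf{\conductivity_2}{h}\cdot\snabla v\dif\Lmeas.
\]

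Setting $w=d$ and estimating each integral by the Cauchy–Schwarz inequality, bounding the $L^2$-norms of the gradients that appear by the corresponding $\Hs$-norms, the first integral is dominated by $\norm{h}_\infty\norm{\CS{\conductivity_1}-\CS{\conductivity_2}}_\Hs\norm{d}_\Hs$, to which I apply \cref{cor:eit:solbound} to extract the factor $C_3\norm{\conductivity_1-\conductivity_2}_\infty$; the second integral is dominated by $\norm{\conductivity_2-\conductivity_1}_\infty\norm{\dCS{\conductivity_2}{h}}_\Hs\norm{d}_\Hs$, to which I apply the derivative norm bound $\norm{\dCS{\conductivity_2}{h}}_\Hs\le C_3\norm{h}_\infty$ from \cref{lemma:eit:frechet-first}. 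Summing yields $B_{\conductivity_1}(d,d) \le 2C_3\norm{h}_\infty\norm{\conductivity_2-\conductivity_1}_\infty\norm{d}_\Hs$; combining with $\norm{d}_\Hs^2 \le C_1 B_{\conductivity_1}(d,d)$ and dividing by $\norm{d}_\Hs$ (the case $d=0$ being trivial) delivers the claimed bound $2C_1C_3\norm{h}_\infty\norm{\conductivity_2-\conductivity_1}_\infty$. The only delicate point, and the one responsible for the factor $2$, is the add-and-subtract bookkeeping that re-expresses both defining equations through the single coercive form $B_{\conductivity_1}$; once that identity is in place, the remaining estimates are entirely routine applications of Cauchy–Schwarz together with the two previously established bounds.
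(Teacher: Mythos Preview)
Your proof is correct and follows essentially the same approach as the paper: the paper also expands $B_{\conductivity_1}(\dCS{\conductivity_1}{h}-\dCS{\conductivity_2}{h},\,\dCS{\conductivity_1}{h}-\dCS{\conductivity_2}{h})$ via the same add-and-subtract of $B_{\conductivity_2}(\dCS{\conductivity_2}{h},\freevar)$, arrives at the identical two-integral expression, and then applies Cauchy--Schwarz together with \cref{cor:eit:solbound} and the derivative bound from \cref{lemma:eit:frechet-first} before invoking coercivity. The only cosmetic difference is that the paper sets $w=d$ from the outset rather than first deriving the identity for general $w$.
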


\begin{proof}
    For any $w_1,w_2 \in \Hs$ we have
    \begin{equation}
        \label{eq:eit:lipschitz-first:0}
        B_{\conductivity_1}(w_1,w_2) = B_{\conductivity_2}(w_1,w_2)  + \int_{\Omega} (\conductivity_1 - \conductivity_2) \snabla v_1 \cdot \snabla v_2 \dif \Lmeas.
    \end{equation}
    Furthermore, by \cref{lemma:eit:frechet-first}, $\dCS{\conductivity_1}{h}$ and $\dCS{\conductivity_2}{h}$ satisfy
    \[
        B_{\conductivity_1}(\dCS{\conductivity_1}{h},w)
        =
        - \int_\Omega h\snabla \CSf{\conductivity_1} \cdot \snabla v \dif \Lmeas
    \quad\text{and}\quad
        B_{\conductivity_2}(\dCS{\conductivity_2}{h},w)
        = - \int_\Omega h\snabla \CSf{\conductivity_2} \cdot \snabla v \dif \Lmeas
    \]
    for any $w \in \Hs$. Using these and \eqref{eq:eit:lipschitz-first:0} yields
    \begin{equation*}
        \begin{aligned}[t]
            A & := B_{\conductivity_1}(\dCS{\conductivity_1}{h}-\dCS{\conductivity_2}{h}, \dCS{\conductivity_1}{h}-\dCS{\conductivity_2}{h})
            \\
            &
            = B_{\conductivity_1}(\dCS{\conductivity_1}{h}, \dCS{\conductivity_1}{h}-\dCS{\conductivity_2}{h}) - B_{\conductivity_2}(\dCS{\conductivity_2}{h}, \dCS{\conductivity_1}{h}-\dCS{\conductivity_2}{h})
            \\
            \MoveEqLeft[-1]
            - \int_{\Omega} (\conductivity_1 - \conductivity_2) \snabla \dCSf{\conductivity_2}{h} \cdot \snabla (\dCSf{\conductivity_1}{h}-\dCSf{\conductivity_2}{h}) \dif \Lmeas
            \\
            &
            = \int_\Omega [h\snabla (\CSf{\conductivity_2} - \CSf{\conductivity_1}) + (\conductivity_2 - \conductivity_1) \snabla \dCSf{\conductivity_2}{h} ]\cdot \snabla (\dCSf{\conductivity_1}{h}-\dCSf{\conductivity_2}{h}) \dif \Lmeas
            \\
            &
            \le
            \left(\norm{h \snabla (\CSf{\conductivity_2} - \CSf{\conductivity_1})}_2 + \norm{(\conductivity_2 - \conductivity_1) \snabla \dCSf{\conductivity_2}{h}}_2\right) \norm{\snabla (\dCSf{\conductivity_1}{h}-\dCSf{\conductivity_2}{h})}_2.
        \end{aligned}
    \end{equation*}
    Using the Cauchy-Schwartz inequality and $\norm{\snabla v}_2 \le \norm{w}_\Hs$ (see \cref{eq:eit:H-norm}) again followed by \cref{cor:eit:solbound} and $\norm{w'(\conductivity_2)}_\Hs \le C_3\norm{h}_{\infty}\norm{U}_{2}$ allows us to continue
    \begin{equation}
        \label{eq:eit:lipschitz-first:1}
        \begin{aligned}
            A
            &
            \le \norm{h}_\infty \norm{\CS{\conductivity_2}-\CS{\conductivity_1}}_\Hs \norm{\dCS{\conductivity_1}{h}-\dCS{\conductivity_1}{h}}_\Hs
            + \norm{\conductivity_2-\conductivity_1}_{\infty} \norm{\dCS{\conductivity_2}{h}} \norm{\dCS{\conductivity_2}{h}-\dCS{\conductivity_1}{h}}_\Hs
            \\
            &
            \le \norm{h}_\infty C_3\norm{\conductivity_2-\conductivity_1}_\infty \norm{\dCS{\conductivity_1}{h}-\dCS{\conductivity_1}{h}}_\Hs
            + \norm{\conductivity_2-\conductivity_1}_{\infty} C_3\norm{h}_{\infty} \norm{\dCS{\conductivity_1}{h}-\dCS{\conductivity_1}{h}}_\Hs.
        \end{aligned}
    \end{equation}
    Finally, \cref{eq:eit:coercivity,eq:eit:lipschitz-first:1} establish
    \[
        \begin{aligned}[t]
            \norm{\dCS{\conductivity_1}{h}-\dCS{\conductivity_2}{h}}_\Hs^2
            &
            \le
            C_1B_{\conductivity_1}(\dCS{\conductivity_1}{h}-\dCS{\conductivity_2}{h}, \dCS{\conductivity_1}{h}-\dCS{\conductivity_2}{h})
            \\
            &
            \le
            2C_1C_3\norm{h}_\infty \norm{\conductivity_2-\conductivity_1}_{\infty} \norm{\dCS{\conductivity_1}{h}-\dCS{\conductivity_2}{h}}_\Hs.
        \end{aligned}
    \]
    Dividing by $ \norm{\dCS{\conductivity_1}{h}-\dCS{\conductivity_2}{h}}_\Hs$ establishes the claimed Lipschitz constant.
\end{proof}

The next lemma confirms the second-order differentiability of the solution mapping.

\begin{lemma}
    \label{lemma:eit:frechet-second}
    Let \cref{ass:eit} hold and $\conductivity \in [x_m, x_M]$.
    Then $h_1 \mapsto \dCS{\conductivity}{h_1}: L^\infty(\Omega)\to \Hs$ is Fréchet differentiable at all $h_1 \in L^\infty(\Omega)$, the Fréchet derivative $h_2 \to \ddCSf{\conductivity}{h_1}{h_2}$ is norm-bounded in $\linear(L^\infty(\Omega) \times L^\infty(\Omega); \Hs)$  by $C_4=2C_1C_3\norm{U}_{2}$, and satisfies the linear equation
    \begin{equation}
        \label{eq:eit:frechet-second:weak-eqn}
        B_\conductivity(\ddCS{\conductivity}{h_1}{h_2},w) = -  \left( \int_\Omega h_1\snabla \dCSf{\conductivity}{h_2} \cdot \snabla v \dif \Lmeas + \int_\Omega h_2 \snabla \dCSf{\conductivity}{h_1} \cdot \snabla v \dif \Lmeas \right)
    \end{equation}
    for all $h_2\in L^\infty(\Omega)$ and $w=(v,V) \in \Hs$.
\end{lemma}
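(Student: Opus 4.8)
The plan is to mirror the proof of \cref{lemma:eit:frechet-first}, but with the solution map replaced by its first derivative, and to differentiate in the conductivity $\conductivity$. First I would produce the candidate second derivative via Lax--Milgram: for fixed $\conductivity$, $h_1$, $h_2$, the left-hand side of \eqref{eq:eit:frechet-second:weak-eqn} is coercive in $w=(v,V)$ by \eqref{eq:eit:coercivity}, while the right-hand side depends only on $\snabla v$ and is a bounded linear functional of $w$, since $\CSf{\conductivity}, \dCSf{\conductivity}{h_1}, \dCSf{\conductivity}{h_2} \in H^1(\Omega)$ and $h_1,h_2 \in L^\infty(\Omega)$. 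This defines $\ddCS{\conductivity}{h_1}{h_2} \in \Hs$ uniquely. Because the right-hand side is linear in each of $h_1$ and $h_2$ (using that $\dCS{\conductivity}{\freevar}$ is linear by \cref{lemma:eit:frechet-first}), well-posedness makes $(h_1,h_2) \mapsto \ddCS{\conductivity}{h_1}{h_2}$ bilinear; testing \eqref{eq:eit:frechet-second:weak-eqn} with $w = \ddCS{\conductivity}{h_1}{h_2}$ and combining \eqref{eq:eit:coercivity}, the Hölder inequality, $\norm{\snabla \dCSf{\conductivity}{h_i}}_2 \le \norm{\dCS{\conductivity}{h_i}}_\Hs$, and the first-order bound $\norm{\dCS{\conductivity}{h_i}}_\Hs \le C_3\norm{h_i}_\infty$ from \cref{lemma:eit:frechet-first} yields the claimed norm bound $C_4$.

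The core of the proof is to verify that this bilinear map is genuinely the Fréchet derivative of $\conductivity \mapsto \dCS{\conductivity}{h_1}$, i.e.\ that
\[
    \lim_{\norm{h_2}_\infty \to 0}\frac{\norm{\dCS{\conductivity+h_2}{h_1} - \dCS{\conductivity}{h_1} - \ddCS{\conductivity}{h_1}{h_2}}_\Hs}{\norm{h_2}_\infty} = 0,
\]
where I restrict to $\norm{h_2}_\infty \le \conductivity_m/2$ so that $\CS{\conductivity+h_2}$ and $\dCS{\conductivity+h_2}{h_1}$ are well-defined. Writing $w \defeq \dCS{\conductivity+h_2}{h_1} - \dCS{\conductivity}{h_1} - \ddCS{\conductivity}{h_1}{h_2}$, coercivity gives $\norm{w}_\Hs^2 \le C_1 B_\conductivity(w,w)$, so it suffices to bound $B_\conductivity(w,w)$. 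I would expand it term by term using the elementary form-difference identity $B_{\conductivity}(w_1,w_2) = B_{\conductivity+h_2}(w_1,w_2) - \int_\Omega h_2 \snabla v_1 \cdot \snabla v_2 \dif\Lmeas$ (immediate from the definition of $B_\conductivity$), the first-order weak equation \eqref{eq:eit:frechet-first:weak-eq} at both $\conductivity$ and $\conductivity+h_2$, and the second-order weak equation \eqref{eq:eit:frechet-second:weak-eqn}. After grouping, the $h_1$- and $h_2$-weighted contributions assemble into two genuinely second-order differences, leaving
\[
    B_\conductivity(w,w) = - \int_\Omega h_1 \snabla(\CSf{\conductivity+h_2} - \CSf{\conductivity} - \dCSf{\conductivity}{h_2}) \cdot \snabla v \dif\Lmeas - \int_\Omega h_2 \snabla(\dCSf{\conductivity+h_2}{h_1} - \dCSf{\conductivity}{h_1}) \cdot \snabla v \dif\Lmeas,
\]
with $v$ the $H^1$-component of $w$.

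It then remains to estimate the two integrals. For the first, Hölder together with the quadratic first-order remainder $\norm{\CS{\conductivity+h_2} - \CS{\conductivity} - \dCS{\conductivity}{h_2}}_\Hs \le C_1C_3\norm{h_2}_\infty^2$ from \cref{lemma:eit:frechet-first} gives a bound of order $\norm{h_1}_\infty \norm{h_2}_\infty^2 \norm{w}_\Hs$; for the second, the Lipschitz estimate $\norm{\dCS{\conductivity+h_2}{h_1} - \dCS{\conductivity}{h_1}}_\Hs \le 2C_1C_3\norm{h_1}_\infty\norm{h_2}_\infty$ from \cref{eq:eit:lipschitz-first}, carrying the explicit $\norm{h_2}_\infty$ factor, again yields order $\norm{h_1}_\infty\norm{h_2}_\infty^2\norm{w}_\Hs$. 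Feeding these into $\norm{w}_\Hs^2 \le C_1 B_\conductivity(w,w)$ and dividing by $\norm{w}_\Hs$ produces $\norm{w}_\Hs \le (\text{const})\,\norm{h_1}_\infty\norm{h_2}_\infty^2$, which is $o(\norm{h_2}_\infty)$ and establishes the limit. I expect the main obstacle to be purely organisational: arranging the term-by-term expansion of $B_\conductivity(w,w)$ so that exactly the mixed pieces cancel and precisely the two second-order remainders survive. All the analytic content is already supplied by the quadratic remainder of \cref{lemma:eit:frechet-first} and the Lipschitz bound of \cref{eq:eit:lipschitz-first}, so no new estimate is needed.
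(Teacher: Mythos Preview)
Your proposal is correct and follows essentially the same route as the paper: Lax--Milgram for existence, testing with $w=\ddCS{\conductivity}{h_1}{h_2}$ for the norm bound, and then the key form computation that reduces $B_\conductivity(w,w)$ to the two second-order remainder integrals, estimated via the quadratic remainder of \cref{lemma:eit:frechet-first} and the Lipschitz bound of \cref{eq:eit:lipschitz-first}. The only cosmetic difference is ordering (the paper proves the norm bound before bilinearity), and your reading of the statement as differentiability of $\conductivity \mapsto \dCS{\conductivity}{h_1}$ matches what the paper actually proves.
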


\begin{proof}
    The right-hand side of \eqref{eq:eit:frechet-second:weak-eqn} is linear and bounded with respect to $w$. Hence by the Lax-Milgram theorem there exists a solution $\ddCS{\conductivity}{h_1}{h_2}$ to \eqref{eq:eit:frechet-second:weak-eqn}.
    We will show that $h_2 \mapsto \ddCS{\conductivity}{h_1}{h_2}$ is the Fréchet derivative of $h_1 \mapsto \dCS{\conductivity}{h_1}$.
    To see that the former is a candidate for the Fréchet derivative, we first establish the boundedness and the bilinearity of $(h_1, h_2) \mapsto \ddCS{\conductivity}{h_1}{h_2}$.
    Indeed, due to \cref{eq:eit:coercivity,eq:eit:frechet-second:weak-eqn},
    \[
        \begin{aligned}
            \norm{\ddCS{\conductivity}{h_1}{h_2}}_\Hs^2
            &
            \le C_1 B_\conductivity(\ddCS{\conductivity}{h_1}{h_2},\ddCS{\conductivity}{h_1}{h_2})
            \\
            &
            \le C_1\left( \adaptabs{  \int_\Omega h_1\snabla \dCSf{\conductivity}{h_2} \cdot \snabla \ddCSf{\conductivity}{h_1}{h_2} \dif \Lmeas}\right.
            \left.
            + \adaptabs{\int_\Omega h_2 \snabla \dCSf{\conductivity}{h_1} \cdot \snabla \ddCSf{\conductivity}{h_1}{h_2} \dif \Lmeas } \right)
            \\
            &
            \le C_1\bigl( \norm{h_1}_\infty \norm{   \dCS{\conductivity}{h_2} }_\Hs + \norm{h_2}_\infty\norm{\dCS{\conductivity}{h_1} }_\Hs\bigr) \norm{  \ddCS{\conductivity}{h_1}{h_2} }_\Hs
            \\
            &
            \le 2C_1C_3\norm{U}_{2} \norm{h_1}_\infty \norm{h_2}_\infty \norm{  \ddCS{\conductivity}{h_1}{h_2} }_\Hs.
        \end{aligned}
    \]
    Clearly then $\norm{\ddCS{\conductivity}{h_1}{h_2}}_\Hs \le 2C_1C_3\norm{U}_{2} \norm{h_1}_\infty \norm{h_2}_\infty$.

    To see bilinearity, first observe that the first integral left-hand side of \eqref{eq:eit:frechet-second:weak-eqn} is linear with respect to $h_1$ and the second integral is linear with respect to $h_2$. Then take $h_2= af + bg$, for $a,b\in \R$ and $f,g\in L^\infty(\Omega)$. Since $h \mapsto \dCSf{\conductivity}{h}$ is linear, we have
    \[
        \begin{aligned}[t]
        \int_\Omega h_1\snabla\dCSf{\conductivity}{af + bg} \cdot \snabla v \dif \Lmeas
        &=
        \int_\Omega h_1\snabla(a\dCSf{\conductivity}{f} + b\dCSf{\conductivity}{g})\cdot \snabla v \dif \Lmeas
        \\
        &
        =
        \int_\Omega h_1(a\snabla \dCSf{\conductivity}{f} \cdot \snabla v) \dif \Lmeas+ \int_\Omega h_1(b\snabla \dCSf{\conductivity}{g} \cdot \snabla v) \dif \Lmeas,
        \end{aligned}
    \]
    This show sthe linearity of the first term with respect to $h_2$. The same reasoning shows the linearity of the second term with respect to $h_1$, confirming the bilinearity of the both terms. Arguing similarly to \cref{lemma:eit:frechet-first}, we conclude that $(h_1, h_2) \mapsto \ddCS{\conductivity}{h_1}{h_2}$ is bilinear.

    Finally, we confirm that $h_2 \mapsto \ddCS{\conductivity}{h_1}{h_2}$ is the Fréchet derivative of $h_1 \mapsto \dCS{\conductivity}{h_1}$.
     Let $\norm{h_2}_\infty \le \conductivity_m/2$ and $h_1 \in L^\infty(\Omega)$. Then
    \begin{equation}
        \label{eq:eit:frechet-second:0}
        \begin{aligned}[t]
            &
            B_\conductivity(\dCS{\conductivity+h_2}{h_1} - \dCS{\conductivity}{h_1} - \ddCS{\conductivity}{h_1}{h_2},w)
            \\
            &
            = B_\conductivity(\dCS{\conductivity+h_2}{h_1},w) - B_\conductivity(\dCS{\conductivity}{h_1},w) - B_\conductivity(\ddCS{\conductivity}{h_1}{h_2},w)
            \\
            &
            = B_{\conductivity+h_2}(\dCS{\conductivity+h_2}{h_1},w) - B_\conductivity(\dCS{\conductivity}{h_1},w)
            -\int_\Omega h_2\snabla \dCSf{\conductivity+h_2}{h_1} \cdot \snabla v \dif \Lmeas  - B_\conductivity(\ddCS{\conductivity}{h_1}{h_2},w)
            \\
            &
            = - \int_\Omega h_1\snabla (\CSf{\conductivity + h_2} -\CSf{\conductivity} )\cdot \snabla v \dif \Lmeas  -\int_\Omega h_2\snabla \dCSf{\conductivity+h_2}{h_1} \cdot \snabla v \dif \Lmeas  - B_\conductivity(\ddCS{\conductivity}{h_1}{h_2},w)
            \\
            &
            = -\int_\Omega h_1\snabla (\CSf{\conductivity + h_2} -\CSf{\conductivity} -  \dCSf{\conductivity}{h_2})\cdot \snabla v \dif \Lmeas  - \int_\Omega h_2\snabla (\dCSf{\conductivity+h_2}{h_1} - \dCSf{\conductivity}{h_1}) \cdot \snabla v \dif \Lmeas,
        \end{aligned}
    \end{equation}
    where on the fourth line we used \cref{lemma:eit:frechet-first} on the first two bilinear terms, and on the last line we used \eqref{eq:eit:frechet-second:weak-eqn}.
    Observe for any $w_1 = (v_1, V_1),w_2=(v_2, V_2) \in \Hs$ that
    \[
        \int_\Omega h\snabla  v_1 \cdot \snabla v_2 \dif \Lmeas \le \norm{h}_\infty\norm{\snabla v_1}_2 \norm{\snabla v_2}_2 \le \norm{h}_\infty\norm{ w_1}_\Hs \norm{ w_2}_\Hs.
    \]
    Using this, \cref{eq:eit:coercivity,eq:eit:frechet-second:0} with $w=\dCS{\conductivity+h_2}{h_1} - \dCS{\conductivity}{h_1} - \ddCS{\conductivity}{h_1}{h_2}$, as well as the triangle inequality, we obtain
    \begin{equation}
        \label{eq:eit:frechet-second:1}
        \begin{aligned}[t]
            &
            \norm{\dCS{\conductivity+h_2}{h_1} - \dCS{\conductivity}{h_1} - \ddCS{\conductivity}{h_1}{h_2}}_\Hs^2
            \\
            &
            \le C_1 \abs{B_\conductivity(\dCS{\conductivity+h_2}{h_1} - \dCS{\conductivity}{h_1} - \ddCS{\conductivity}{h_1}{h_2},\dCS{\conductivity+h_2}{h_1} - \dCS{\conductivity}{h_1} - \ddCS{\conductivity}{h_1}{h_2})}
            \\
            &
             \le C_1 \left( \adaptabs{\int_\Omega h_1\snabla (\CSf{\conductivity + h_2} -\CSf{\conductivity} -  \dCSf{\conductivity}{h_2})\cdot \snabla (\dCSf{\conductivity+h_2}{h_1} -\dCSf{\conductivity}{h_1} -  \ddCSf{\conductivity}{h_1}{h_2}) \dif \Lmeas}\phantom{\int_\Omega}\right.
            \\
            \MoveEqLeft[-2.5]
            +\left.\adaptabs{\int_\Omega h_2\snabla (\dCSf{\conductivity+h_2}{h_1} - \dCSf{\conductivity}{h_1}) \cdot \snabla (\dCSf{\conductivity+h_2}{h_1} -\dCSf{\conductivity}{h_1} -  \ddCSf{\conductivity}{h_1}{h_2}) \dif \Lmeas}\right)
            \\
            &
            \le C_1 \bigl(\norm{h_1}_\infty\norm{\CS{\conductivity + h_2} -\CS{\conductivity} -  \dCS{\conductivity}{h_2}}_\Hs
            +\norm{h_2}_\infty\norm{ \dCS{\conductivity+h_2}{h_1} - \dCS{\conductivity}{h_1}}_\Hs\bigr)
            \\
            \MoveEqLeft[-2.5]
            \cdot \norm{\dCS{\conductivity+h_2}{h_1} - \dCS{\conductivity}{h_1} - \ddCS{\conductivity}{h_1}{h_2}}_\Hs.
        \end{aligned}
    \end{equation}
    By \cref{lemma:eit:frechet-first,eq:eit:lipschitz-first}, we have, respectively
    \[
        \norm{\CS{\conductivity+h_2} -\CS{\conductivity} - \dCS{\conductivity}{h_2}}_\Hs
        \le
        C_1C_3 \norm{h_2}_\infty^2
        \quad\text{and}\quad
        \norm{\dCS{\conductivity}{h_1}-\dCS{\conductivity+h_2}{h_1}}_\Hs
        \le
        2C_1C_3 \norm{h_1}_\infty \norm{h_2}_{\infty}.
    \]
    Dividing  \eqref{eq:eit:frechet-second:1} by $\norm{\dCS{\conductivity+h_2}{h_1} - \dCS{\conductivity}{h_1} - \ddCS{\conductivity}{h_1}{h_2}}_\Hs$ and using these estimates establishes
    \[
        \begin{aligned}
            \norm{\dCS{\conductivity+h_2}{h_1} - \dCS{\conductivity}{h_1} - \ddCS{\conductivity}{h_1}{h_2}}_\Hs
            \le C_1^2C_3(1 + 2\norm{h_1}_\infty)\norm{h_1}_\infty \norm{h_2}_\infty^2 .
        \end{aligned}
    \]
    Dividing by $\norm{h_2}_\infty$ and letting $\norm{h_2}_\infty \downto 0$ shows second-order Fréchet differentiability.
\end{proof}

\subsection{Three-point inequalities}\label{ssec:tpi}

Next, we discuss how \cref{ass:pd:main-local} can be satisfied. We assume that $\PpredictConstr_{k}$ is a finite dimensional space, as will be the case in the numerical realisation of the next section, so that the results of \cref{sec:eit-diff} are compatible with the results of \cref{sec:pd}.
We write $w_x(U^{j,k})$ for the interior potential--electrode current tuples \eqref{eq:eit:tuples} corresponding to multiple electrode potentials $U^{j,k}$ and the conductivity $x$.
Define $S: L^\infty(\Omega) \to R^{N_1N_2}$, by
\begin{equation}
    \label{eq:SolOpk}
    S(\conductivity)=(S_1(\conductivity), \ldots, S_{N_1}(\conductivity))
    \quad\text{for}\quad
    \SolOp_k(\conductivity)
    = \bigl(\WOp Pw_\conductivity(U^{1,k}), \dots, \WOp Pw_\conductivity(U^{N_2,k}) \bigr),
\end{equation}
where $P$ is a projection from $\Hs$ to $\R^{N_1}$ that extracts the electrode currents, i.e., $Pw_\conductivity(U^{j,k}) = I^{j,k}$.
Then \cref{lemma:eit:frechet-first,lemma:eit:frechet-second} show that $S'$ and $S''$ are norm-bounded by
\begin{equation}
    \label{eq:dSkbounds}
    S'_{\max} \defeq N_2C_1C_2\max_{j,k}\norm{U^{j,k}}_2\norm{\WOp}_2
    \text{ and }
    S''_{\max}\defeq 2N_2C_1^2C_2^2\max_{j,k}\norm{U^{j,k}}_2^2\norm{\WOp}_2.
\end{equation}
Writing $\mathcal{I}^k$ for the measurement vector corresponding to \eqref{eq:SolOpk}\footnote{The entries of $\mathcal{I}^k$ are also multiples of $\WOp$.},  $E_k$ of \eqref{eq:eit:functionals} then reads
\begin{equation}
    \label{eq:eit:Ek}
    E_k(\conductivity) \defeq \frac{1}{2}\norm{\SolOp_k(\conductivity) - \mathcal{I}^k}^2.
\end{equation}

We define $\bar B_x w$ as the Riesz representation of $B_x(w, \freevar)$.
Then, minding \eqref{eq:eit:coercivity}, $\bar B_x \in \linear(\Hs; \Hs)$ is invertible with eigenvalues bounded from below by $\inv C_1$.
Also write $l_{x,h} \in \Hs$ for the Riesz representation of the right hand side of \eqref{eq:eit:frechet-first:weak-eq} as a functional of $w$:
$\iprod{l_{x, h}}{w} = - \int_\Omega h \snabla \CSf{\conductivity} \cdot \snabla v \dif \Lmeas$ for any $w=(v,V) \in \Hs$.
Note that here $v_x$ is a component of the solution $w_x=(v_x, W_x) \in \Hs$.
Then, by \cref{lemma:eit:frechet-first}, $\bar B_x w_{x,h}' = l_{x,h}$.
By the invertibility of $\bar B_x$, we can write $w_{x,h}' = \inv{\bar B_x} l_{x,h}$.
Also write $l_{x,h}=L_x h$, where $L \in \linear(L^p(\Omega); \Hs)$, with $1 < p < \infty$.
It follows that
\[
    \norm{\Sigma^{-1/2} P w_{x,h}'}^2
    = \iprod{\Sigma^{-1} P^* {\bar B_x}^{-1} l_{x,h}}{P {\bar B_x}^{-1} l_{x,h}}
    = \iprod{L_x^*{\bar B_x}^{-*} P^* \Sigma^{-1} P {\bar B_x}^{-1} L_x h}{ h}.
\]
Writing $w_{x,h}'(U^{j,k})$ for the differential of $w_x(U^{j,k})$ in the direction $h$ from $x$, and $L_x^{j,k}$ for $L_x$ corresponding to $w_x(U^{j,k})=(v_x(U^{j,k}), V_x(U^{j,k}))$, we thus get
\begin{gather*}
    \norm{S_k'(x,h)}_{\R^{N_1N_2}}^2=
    \sum_{j=1}^{N_2}
    \norm{\Sigma^{-1/2} Pw_{x,h}'(U^{j,k})}_{\R^{N_1}}^2
    =
    \iprod{A_k h}{ h}
    \\[-1ex]
\shortintertext{for}
    A_k \defeq \sum_{j=1}^{N_2} (L_x^{j,k})^*{\bar B_x}^{-*} P^* \Sigma^{-1}  P {\bar B_x}^{-1} L_x^{j,k}.
\end{gather*}

The next proof relies on a lower bound on $A_k$ to model the idea that for a potential measurement setup $U^{j,k}$, ($j=1,\ldots,N_2$), some electrode, indexed by $i=1,\ldots,N_1$, should react to a change $h$ in the conductivity $x$. If $x$ and $h$ are discretised to a finite grid of $n$ nodes, then it seems reasonable that this can be achieved as long as $N_1N_2 > n$. If the latter does not hold, the condition could still be achieved at specific $x$ or for specific directions $h$.
Practically, for the condition to hold, the comparison point $\this\optx$ (e.g., ground-truth) data fit $\norm{\SolOp_k(\this\optx ) - b_k}$ has to be good enough, i.e., for the noise level to be low enough, and the radius $\delta>$ where we seek to satisfy \cref{ass:pd:main-local}, has to be small enough.
This radius affects the closeness requirement of the initial iterate to $\optx^0$ through \cref{ass:pd:add-local} and \eqref{eq:pd:main-gap:local:init}.

\begin{theorem}
    \label{cor:bt:Ebounds}
    Let \cref{ass:eit} hold and $\PpredictConstr_{k}$ be finite dimensional.
    Define $E_k$ by \eqref{eq:eit:Ek}, and suppose $\this\optx \in \PpredictConstr_{k} \isect B(\this\primalpredict, \delta)$, $\this{\hat x} \in B(\this\primalpredict, \delta)$, and $x \in B(\this\optx,{\delta})$ for a $\delta > 0$, and
    \begin{gather}
        \label{eq:bt:Ebounds:ak-cond}
        A_k \ge \max \{c_1,c_2 \} \Id
    \shortintertext{for}
        \nonumber
        \begin{aligned}
            c_1 & \defeq
            4\theta
            +
            S''_{\max}
            (
            4\norm{\SolOp_k(\this\optx ) - b_k}_{R^{N_1N_2}}
            +
            (1+\sqrt{2} + 2S_{\text{max}}'')\delta^2
            )
            \quad\text{and}
            \\
            c_2 & \defeq 2\theta + 2 S''_{\max}\left(\left(\frac{\delta}{8} + S'_{\max}\right)\delta + \norm{S_k(\this\optx ) - b_k}_{R^{N_1N_2}}\right).
        \end{aligned}
    \end{gather}
    Then \cref{ass:pd:main-local}\,\ref{item:pd:main-forwardstep-local0} and \ref{item:pd:main-forwardstep-local}~hold with $\EkGrowth=4\theta$, $\EkGrowthMono = 3\theta$,$\ErrMono_k = \Err_k= 0$, and
    \[
        \EkLoss = 2\EkLossMono = {S''_{\max}}
        \left(
            S'_{\max}\delta
            + 3\norm{S_k(\this\optx ) - b_k}_{R^{N_1N_2}}
            + \frac{\delta^2S_{\text{max}}''}{2}
        \right)
        +
        \left(2 + \sqrt{2}\right)(S'_{\max})^2.
    \]
\end{theorem}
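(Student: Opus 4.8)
The plan is to verify the two displays of \cref{ass:pd:main-local}\,\cref{item:pd:main-forwardstep-local0} and \cref{item:pd:main-forwardstep-local} directly. Since the forward computations here are exact, $\estgrad_k=\grad E_k$ and the errors $\Err_k,\ErrMono_k$ vanish, so only the growth and loss constants need to be produced. By \cref{lemma:eit:frechet-first,lemma:eit:frechet-second}, $E_k$ of \eqref{eq:eit:Ek} is twice Fréchet differentiable with
\[
    \grad E_k(z) = \SolOp_k'(z)^*(\SolOp_k(z) - b_k),
    \quad
    E_k''(z)[h,h] = \norm{\SolOp_k'(z)h}^2 + \iprod{\SolOp_k(z)-b_k}{\SolOp_k''(z)[h,h]}.
\]
First I would record two facts valid for $z$ on the segments joining $\this\primalpredict$, $\this\optx$, $\this\realoptx$, and $x$ (all within $2\delta$ of $\this\optx$ by the hypotheses and convexity of the norm): since $\SolOp_k$ is $S'_{\max}$-Lipschitz by \eqref{eq:dSkbounds}, the residual obeys $\norm{\SolOp_k(z)-b_k}\le\norm{\SolOp_k(\this\optx)-b_k}+2S'_{\max}\delta$, and consequently $E_k''(z)[h,h]\le\lambda_{\max}\norm{h}^2$ with $\lambda_{\max}\defeq(S'_{\max})^2+S''_{\max}\bigl(\norm{\SolOp_k(\this\optx)-b_k}+2S'_{\max}\delta\bigr)$. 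This uniform upper bound on $E_k''$ over the relevant balls is the source of the loss constants.

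For the three-point inequality \cref{item:pd:main-forwardstep-local} (first display) I would write
\[
    \iprod{\grad E_k(\this\primalpredict)}{x-\this\optx} - (E_k(x)-E_k(\this\optx))
    = \bigl[\iprod{\grad E_k(x)}{x-\this\optx}-(E_k(x)-E_k(\this\optx))\bigr]
    + \iprod{\grad E_k(\this\primalpredict)-\grad E_k(x)}{x-\this\optx}.
\]
The key step is to expand the residual to second order about $\this\optx$, writing $\SolOp_k(x)=\SolOp_k(\this\optx)+\SolOp_k'(\this\optx)(x-\this\optx)+\rho$ with $\norm{\rho}\le\tfrac12 S''_{\max}\norm{x-\this\optx}^2$; a direct computation then shows the bracketed two-point term equals the exact quadratic $\tfrac12\norm{\SolOp_k'(\this\optx)(x-\this\optx)}^2=\tfrac12\iprod{A_k(x-\this\optx)}{x-\this\optx}$ plus remainder terms controlled by $\norm{\SolOp_k(\this\optx)-b_k}\,S''_{\max}\norm{x-\this\optx}^2$ and by $O(\delta^2)\norm{x-\this\optx}^2$. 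Here the hypothesis $A_k\ge c_1\Id$ (with $A_k$ the operator at $\this\optx$ from the derivation preceding the theorem) supplies the genuine growth $\tfrac{c_1}{2}\norm{x-\this\optx}^2$ \emph{without any transfer}, which is precisely why the corrections gathered into $c_1$ carry only $\norm{\SolOp_k(\this\optx)-b_k}$ and $\delta^2$ and not $\delta$. The cross term is bounded by $\lambda_{\max}\norm{x-\this\primalpredict}\,\norm{x-\this\optx}$ and split with Young's inequality, producing the loss $\tfrac{\EkLoss}{2}\norm{x-\this\primalpredict}^2$ while eating a controlled fraction of the growth; choosing the Young parameter (this is where the $2+\sqrt2$ enters) and absorbing all corrections into the slack leaves $\EkGrowth=4\theta$ and the stated $\EkLoss$.

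For the monotonicity-like inequalities \cref{item:pd:main-forwardstep-local0} and the second display of \cref{item:pd:main-forwardstep-local}, I would instead decompose
\[
    \iprod{\grad E_k(\this\primalpredict)-\grad E_k(\this\realoptx)}{x-\this\realoptx}
    = \iprod{\grad E_k(x)-\grad E_k(\this\realoptx)}{x-\this\realoptx}
    - \iprod{\grad E_k(x)-\grad E_k(\this\primalpredict)}{x-\this\realoptx}.
\]
The first term equals $\int_0^1 E_k''(\this\realoptx+t(x-\this\realoptx))[x-\this\realoptx,x-\this\realoptx]\dif t$, which I bound below by $\mu\norm{x-\this\realoptx}^2$; since the coercivity $A_k\ge c_2\Id$ is imposed at $\this\optx$ rather than at the critical point $\this\realoptx\in H_k^{-1}(0)$, here I must transfer $\norm{\SolOp_k'(z)h}^2$ from $\this\optx$ to points near $\this\realoptx$ using the Lipschitz bound \cref{eq:eit:lipschitz-first} on $\SolOp_k'$, which introduces exactly the $S'_{\max}S''_{\max}\delta$-type correction visible in $c_2$, and then subtract the curvature term $\norm{\SolOp_k(z)-b_k}\,S''_{\max}$. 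The second term is again bounded by $\lambda_{\max}\norm{x-\this\primalpredict}\,\norm{x-\this\realoptx}$ and split by the same Young parameter; because the loss enters this display with coefficient $1$ (versus $\tfrac12$ in the first display), this yields $\EkLossMono=\tfrac12\EkLoss$ together with $\EkGrowthMono=3\theta$. The global statement \cref{item:pd:main-forwardstep-local0} follows from the identical computation with the ball restriction removed and the growth constant $\EkGrowthGlobal$ allowed to be negative. The hard part throughout is the bookkeeping: controlling the residual and the second-order remainder uniformly over the balls, and calibrating the single Young parameter so that the growth constants come out as exactly $4\theta$ and $3\theta$ with $\EkLoss=2\EkLossMono$, which is what forces the precise forms of $c_1$ and $c_2$ and hence the appearance of their maximum.
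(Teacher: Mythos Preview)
Your decomposition is genuinely different from the paper's, and as written it does not reproduce the stated constants. The paper's proof simply invokes \cref{cor:dataterm:final} with $\epsilon=2\tilde\epsilon=2\theta$ and $\beta=1-2^{-1/2}$; the substance lives in \cref{app:dataterm}, where the key move is the exact identity of \cref{lemma:dataterm:basic} for $\iprod{\grad E_k(z)}{z-x}$, applied \emph{twice} with $z=\this\primalpredict$ to split $x-\this\optx=(x-z)+(z-\this\optx)$. This leaves an intact term $\tfrac{1}{2}\norm{\SolOp_k'(z)(x-z)}^2$ that is bounded \emph{only as loss} by $\tfrac{1}{2\beta}(S'_{\max})^2\norm{x-z}^2$, while the growth term $\tfrac{1}{2}\norm{\SolOp_k'(z)(\this\optx-z)}^2$ is converted to $\norm{\SolOp_k'(\this\optx)(x-\this\optx)}^2$ via two Young inequalities that introduce only $O(\delta^2)$ corrections. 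That is why $(S'_{\max})^2$ appears solely in $\EkLoss$ and never in $c_1$.

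Your route decomposes at $x$ instead of at $z$: you isolate $\iprod{\grad E_k(x)}{x-\this\optx}-(E_k(x)-E_k(\this\optx))$ and then handle the cross term $\iprod{\grad E_k(\this\primalpredict)-\grad E_k(x)}{x-\this\optx}$ by the Hessian bound $\lambda_{\max}\norm{x-\this\primalpredict}\,\norm{x-\this\optx}$ followed by Young. But $\lambda_{\max}$ already contains $(S'_{\max})^2$, so after Young a fraction of $(S'_{\max})^2$ is subtracted from the growth in $\norm{x-\this\optx}^2$. To still end at $\EkGrowth=4\theta$, the hypothesis on $A_k$ would have to absorb this, i.e.\ $c_1$ would need an $(S'_{\max})^2$ term---which it does not have. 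A crude Lipschitz-plus-Young treatment of the cross term therefore cannot deliver the theorem as stated. The fix is to refrain from collapsing the cross term to a product of norms and instead keep $\norm{\SolOp_k'(\,\cdot\,)(x-\this\primalpredict)}^2$ as a separate loss quantity, which is exactly what the paper's $z$-based decomposition does structurally. Your monotonicity argument inherits the same defect, and your explanation ``because the loss enters with coefficient $1$ versus $\tfrac12$'' correctly identifies the normalization in \cref{ass:pd:main-local} but does not by itself force $\EkLossMono=\tfrac12\EkLoss$; in the paper this equality comes out because the \emph{same} $D$ appears in both \eqref{eq:dataterm:final:claim:i} and \eqref{eq:dataterm:final:claim:ii}.
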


\begin{proof}
    Since $S_k$ is twice differentiable, the bounds \eqref{eq:dSkbounds} on $\norm{S'}$ and $\norm{S''}$ guarantee \cref{ass:bt:difbounds}.
    By the preceding discussion, \eqref{eq:bt:Ebounds:ak-cond} guarantees
    \[
        \norm{S_k'(\this\optx)(x - \this\optx)}_{Z_k}^2 \ge
        \max\{c_1, c_2\}
        \norm{x - \this\optx}_{X_k}^2
    \]
    Now \cref{cor:dataterm:final} with $\epsilon = 2\tilde \epsilon = 2\theta$ and $\beta = 1 - 2^{-1/2}$ shows for $D = \frac{1}{2}\EkLoss = \EkLossMono$ that
    \begin{gather*}
        \iprod{\grad E_k(\this\primalpredict)}{x - \this\optx}_{X_k}  \ge E_k(x) - E_k(\this\optx) + 2\theta \norm{x - \this\optx}_{X_k}^2 - D\norm{x-\this\primalpredict}_{X_k}^2
    \shortintertext{and}
        \iprod{\grad E (\this\primalpredict) - \grad E (\this{\hat x})}{x - \this{\hat x}}_{X_k}  \ge  3\theta \norm{x - \this\optx}^2_{X_k} - D\norm{x-\this\primalpredict}^2_{X_k}.
        \qedhere
    \end{gather*}
\end{proof}

Since we enforce $x \in [x_m, x_M]$, a.e., the previous theorem can also be used to prove the global \cref{ass:pd:main-global} by taking $\delta$ large enough.

\section{Numerical experiments}
\label{sec:numerical}

We numerically assess \cref{alg:pd:alg} in dynamic EIT imaging of a solid object moving in a fluid assumed to follow the incompressible constant-density transport equation.
Our evaluation extends to scenarios that challenge the constant speed and incompressibility assumptions. We evaluate several dual predictors, and compare the results against static reconstructions with the Relaxed Inexact Gauss--Newton method (RIPGN) \cite{jauhiainen2020ripgn}.
Our software implementation is available on Zenodo \cite{jauhiainen2025online-eit-codes}.

\subsection{Test scenarios}

The test scenarios take place in a disk-shaped domain denoted by $\Omega$. We use $N_1=16$ evenly placed boundary electrodes. We set the electrode potentials $U^{j,k}$ so that an electrode $j$ is set to a potential $U^{j,k}_j = 1\,\text{V}$ while all others are grounded, $U^{j,k}_i = 0\,\text{V}$ for $i\neq j$. This pattern repeats for all electrodes, leading to $N_2=16$ sets of electrode potentials. To mimic the typical EIT measurements, we exclude the currents at the excited electrode $j$ from both measurements and the forward operator, as these currents are often not measured by EIT devices. Thus, each time instance $k$ yields a total of $(N_1-1) N_2=240$ measurements.

The four experiments are as follows:
\begin{description}
    \item[Baseline]
    Features an inclusion moving at constant speed on a homogeneous background.
    Serves as a validation scenario to ensure the algorithm works as expected.
    \item[Circular Motion]
    Features an inclusion following a circular motion path, challenging the constant movement assumption.
    \item[Halting Motion]
    Features an inclusion that comes to a halt at frames 1000 and 2000, further challenging the constant movement assumption.
    \item[Disappearing Inclusions]
    Features two inclusions moving in circular path. First inclusions disappears at frame 500 and the second at frame 1000. Both reappear at frame 1500. This case challenges the incompressibility assumption.
\end{description}

The \emph{Baseline} experiment has 400 time frames, while the other three have 2000. The background conductivity is $x_\text{bg} = 1$ S and all inclusions are resistive with $x_\text{incl} = 10^{-4}$ S.

We simulate the measurement data by approximating \eqref{eq:eit:cem} with the Galerkin finite element method (FEM). We use piecewise linear basis. The simulation mesh has 5039 nodes and 9852 elements. Each simulated measurement $\EITmeas_i^{j,k}$ has added Gaussian noise with standard deviation $\text{std} = 10^{-4}\abs{\EITmeas_i^{j,k}}$, in the standard range of EIT. For the specifics of how to solve \eqref{eq:eit:cem} and its Fréchet derivative with FEM, see \cite{jauhiainen2020ripgn}.

\subsection{Numerical setup}

Recall the definition of  $E_k$ from \eqref{eq:eit:functionals}. The currents $S_k:X^k\times \R^{N_1} \to \R^{N_1N_2}$ in $E_k$ are obtained by approximating the potential functions $u^{j,k}$ in \eqref{eq:eit:cem} through FEM, using piecewise linear basis functions. The conductivity $x$ is also represented in the same basis. To prevent the `inverse crime' \cite{kaipio2000statistical}, we use a less dense mesh for the forward problem than we used for simulation, featuring 2917 nodes and 5430 mesh elements.

\subsubsection{Background processing}\label{sssec:bgprocessing}

Note that the computation of $S_k(x^k)$ and $\grad S_k(x^k)$ is highly resource-intensive. To optimise the computational speed of \cref{alg:pd:alg}, we implement the following background processing strategy: we approximate $\grad S_k(x^k)$ by $\grad S_k(\check x)$, initially $\check x =x^0$. We then approximate $S_k(x^k)$ by a first-order Taylor expansion around $\check x$. In the background, we compute $S_k(\tilde x)$ and $\grad S_k(\tilde x)$, at a linearisation point $\tilde x=x^j$. After completing these background computations, we update $\check x = \tilde x$ as well as $\tilde x=x^k$ for the current iterate, and start computing new values of the operators in the background.
In \cref{app:numerics:pde} we show that the background processing scheme satisfies \cref{ass:pd:main-local}.

\subsubsection{Predictors}\label{sssec:predictors}

We construct the primal component of the predictor $P_k$ by assuming constant velocity and incompressibility in the moving objects. Denoting by $h^k(\xi) \in \R^2$ the displacement at $\xi \in \Omega$ between frames $k$ and $k-1$, we define the primal prediction referred to as Flow, as $\primalpredict^{k+1}(\xi) = W_kx^k(\xi) \defeq x^k(\xi + h^k(\xi))$. Since $h^k$ represents the estimated displacement between the current and previous frames and is used to predict the next frame, this effectively assumes a constant velocity between frames.

We estimate displacement $h^k$ from the incompressible transport equation
\begin{equation}\label{eq:iflow:pde}
    \frac{\partial x^k}{\partial t} - \snabla x^k \cdot v = 0.
\end{equation}
Here, $v = (v_1, v_2) \in \R^2$ represents the velocity responsible for the time-dependent displacement $h$ at time $t$. Using a fixed time step of $\Delta t = 1$, we approximate $\frac{\partial x^k}{\partial t}$ as $(\thisx - x^{k-1})/\Delta t = \thisx - x^{k-1}$. This leads us to a problem akin to optical flow, where the solution for $v$ is subject to non-uniqueness due to the aperture problem \cite{beauchemin1995computation}. Similar to the well-established Horn-Schunck method \cite{horn1981determining}, we approximate the solution of \eqref{eq:iflow:pde} by
\begin{equation}\label{eq:flow:estim}
    v^k = \argmin_v~ \frac{1}{2} \adaptnorm{\frac{\partial x^k}{\partial t} - \snabla x^{k-1} \cdot v}^2 + \frac{\beta_1}{2} \left(\norm{\snabla v_1}^2 + \norm{\snabla v_2}^2\right) + \frac{\beta_2}{2}\norm{v}^2.
\end{equation}
Here, $\snabla$ denotes the spatial gradient operator, and $\beta_1$ and $\beta_2$ are positive regularisation parameters. The components $v_1^k$ and $v_2^k$ share the same piecewise linear basis as the conductivity $x^k$, and the displacement $h^k$ corresponds to $v^k$ due to the chosen time step.
To expedite the algorithm, we only update $h^k$ every fourth iteration.

For the dual variable, we consider two distinct predictors: \emph{Greedy} preserves $\iprod{\snabla \nexxt\primalpredict}{\nexxt\dualpredict} = \iprod{\snabla \thisx}{\thisy}$ element-wise, while \emph{Affine} sets $\nexxt\dualpredict \defeq \thisy + c \snabla \nexxt\primalpredict$.
Through the choice of $c > 0$, it seeks to promote sparsity in regions where we expect constant values, as discussed in \cref{ex:pd:predictors}.
To bound the prediction component of the error $e_N^\Sigma(u^{0:N-1}, \optu^{0:N})$ in \cref{cor:pd:main} for these predictors, see \cite[Lemma 3.11 and Lemma 3.13]{better-predict}.
The gradient estimate error we treat in \cref{app:numerics:pde}.
We also perform experiments with the \emph{Identity} predictor $W_k= \Id$ and $T_k=\Id$.

We evaluate the algorithm across four predictor configurations. \emph{No Prediction} is fully uninformed having identity predictors for both primal and dual variables.
\emph{Primal Only} uses incompressible flow prediction for the primal combined with uninformed identity prediction for the dual.
The remaining two configurations feature the incompressible flow prediction for the primal, accompanied by either \emph{Greedy} or \emph{Affine} dual prediction.

Note that only the Baseline experiment fully satisfies the constant velocity assumption. Additionally, the fourth test scenario violates the incompressibility assumption. However, we expect the algorithm to perform correctly, as the optimisation steps should compensate for small errors introduced in the prediction step. Furthermore, $v$ is only an approximate solution to \eqref{eq:iflow:pde}, meaning some degree of compression may still be present.

\subsubsection{Algorithm parameters}
In all experiments, we use the following parameters:%
\begin{description}
    \item[Main problem parameters]
    We set $\WOp=200 \Id$, $\alpha = 0.5$, $\conductivity_m = 10^{-5}$, and $\conductivity_M = 10^{5}$. We do not use the knowledge of the precise noise statistics or the conductivity range.
    \item[Step length parameters]
    We take constant $\tau_k \equiv \tau \defeq 0.85\norm{\WOp\grad I(\check x)(\WOp\grad I(\check x))^*}^{-1}$ and $\sigma_k \equiv \sigma \defeq 1$.
    \item[Incompressible flow parameters]
    We set $\beta_1 = 10^{-3}$ and $\beta_2=10^{-5}$.
    \item[Affine predictor parameters]
    To promote gradient sparsity in calm areas, we take $c \defeq 10\max_{\xi}\{ 0, 1- 10^{-12}\abs{h^k}^{-1} \}^2$ for $h^k$ the estimated displacement.
\end{description}
The step length parameter choices are discussed in detail in \cref{sup:step:parameters}.

\subsection{Results}

The \emph{Baseline} experiment features an inclusion moving at constant speed. \Cref{fig:case1:of:re} shows the relative objective values and iterate errors
\[
     J_{\operatorname{rel}}^k \defeq \frac{J_k(x^k)}{J_k(x^0)}
     \quad\text{and}\quad
     e_{\operatorname{rel}}^k \defeq \frac{\norm{x^k-x_{\text{true}}^k}}{\norm{x_{\text{true}}^k}}
\]
where
$x_{\text{true}}^k$ is the ground-truth for iteration $k$
with the tested predictor configurations. \emph{No Prediction} performs the worst and \emph{Affine} predictor the best. \emph{Primal Only} and \emph{Greedy} perform similarly, outperforming \emph{No Prediction} but falling behind \emph{Affine}. This is also confirmed by \cref{tab:casesall}.

\begin{table}\footnotesize
    \setlength\tabcolsep{3.5pt}
    \centering
    \caption{Average relative errors and confidence intervals (CI). The latter are calculated using Student's $t$-distribution with the relative errors for each iteration from 1 to 500 as samples.}
    \label{tab:casesall}
    \begin{tabular}{lccc}
        \hline
        \multicolumn{4}{c}{\textbf{Baseline}} \\
        &
        \multicolumn{2}{c}{Average RE} & \\
        Predictor & iter 1 & iter 50 & 95\% CI \\ \hline
        Affine&9.56&9.0725&9.0311 - 9.1139\\
        Greedy&10.7881&10.4691&10.3895 - 10.5487\\
        No predict&14.4899&14.4774&14.4204 - 14.5344\\
        Primal only&10.844&10.53&10.4523 - 10.6077\\
    \end{tabular}
    \begin{tabular}{lccc}
        \hline
        \multicolumn{4}{c}{\textbf{Circular motion}} \\
        & \multicolumn{2}{c}{Average RE} & \\
        Predictor & iter 1 & iter 50 & 95\% CI \\ \hline
         Affine&9.0218&8.8642&8.8424 - 8.886\\
         Greedy&10.5676&10.4494&10.4213 - 10.4775\\
         No predict&14.4643&14.4275&14.3768 - 14.4782\\
         Primal only&10.632&10.5153&10.4868 - 10.5438\\
    \end{tabular}
    \begin{tabular}{lccc}
        \hline
        \multicolumn{4}{c}{\textbf{Halting motion}} \\
        & \multicolumn{2}{c}{Average RE} &\\
        Predictor & iter 1 & iter 50 & 95\% CI \\ \hline
         Affine&9.1663&8.9985&8.9651 - 9.0319\\
         Greedy&10.3048&10.1686&10.1416 - 10.1956\\
         No predict&14.7439&14.7116&14.6822 - 14.741\\
         Primal only&10.2875&10.1508&10.124 - 10.1776\\
    \end{tabular}
    \begin{tabular}{lccc}
        \hline
        \multicolumn{4}{c}{\textbf{Halting motion}} \\
        & \multicolumn{2}{c}{Average RE} & \\
        Predictor & iter 1 & iter 50 & 95\% CI \\ \hline
         Affine&11.1383&10.8471&10.6356 - 11.0586\\
         Greedy&11.5483&11.2712&11.0304 - 11.512\\
         No predict&15.1044&14.8975&14.5571 - 15.2379\\
         Primal only&11.7061&11.4329&11.1949 - 11.6709\\
    \end{tabular}
\end{table}

\begin{figure}[t]
    \centering
    \pgfplotslegendfromname{leg:case1}\\
    \begin{subfigure}{0.49\textwidth}%
        \centering%
        \pgfplotstableread{img/case1/NP_s1.0_t0.85.txt}{\resNP}
        \pgfplotstableread{img/case1/PP_s1.0_t0.85.txt}{\resPP}%
        \pgfplotstableread{img/case1/PDP1_s1.0_t0.85.txt}{\resPDPone}%
        \pgfplotstableread{img/case1/PDP2_s1.0_t0.85.txt}{\resPDPtwo}%
        \pgfplotsset{ every axis/.append style = { legend columns = 4, legend to name = leg:case1} }
        \begin{tikzpicture}
    \begin{axis}[%
        width=\linewidth,
        height=0.5\linewidth,
        ymode = log,
        scaled x ticks=false,
        x tick label style={/pgf/number format/fixed, /pgf/number format/set thousands separator={\,}},
        xminorticks=true,
        minor x tick num=1,
        yminorticks=true,
        minor y tick num=3,
        axis x line*=bottom,
        axis y line*=left,
        legend style={legend pos=north east,inner sep=0pt,outer sep=0pt,legend cell align=left,align=left,draw=none,fill=none,font=\scriptsize},
        ]
        \addplot [color=Set2-A, line width=0.5pt] table [x=Time, y=Jn] {\resNP};
        \addlegendentry{No prediction}
        \addplot [color=Set2-B, dashed, line width=1pt] table [x=Time, y=Jn] {\resPP};
        \addlegendentry{Primal Only}
        \addplot [color=Set2-C, dotted, line width=1.5pt] table [x=Time, y=Jn] {\resPDPone};
        \addlegendentry{Greedy}
        \addplot [color=Set2-D, line width=1pt] table [x=Time, y=Jn] {\resPDPtwo};
        \addlegendentry{Affine}
    \end{axis}
\end{tikzpicture}%
        \caption{Relative objective value}%
        \label{fig:case1:of}%
    \end{subfigure}%
    \hfill%
    \begin{subfigure}{0.49\textwidth}%
        \centering%
        \pgfplotstableread{img/case1/NP_s1.0_t0.85_re.txt}{\resNP}
        \pgfplotstableread{img/case1/PP_s1.0_t0.85_re.txt}{\resPP}%
        \pgfplotstableread{img/case1/PDP1_s1.0_t0.85_re.txt}{\resPDPone}%
        \pgfplotstableread{img/case1/PDP2_s1.0_t0.85_re.txt}{\resPDPtwo}%
        \begin{tikzpicture}
    \begin{axis}[%
        width=\linewidth,
        height=0.5\linewidth,
        %ymode = log,
        %ytick = {1, 1e-1, 1e-2},
        scaled x ticks=false,
        x tick label style={/pgf/number format/fixed, /pgf/number format/set thousands separator={\,}},
        y tick label style={/pgf/number format/fixed, /pgf/number format/set thousands separator={\,}},
        xminorticks=true,
        minor x tick num=1,
        yminorticks=true,
        minor y tick num=3,
        axis x line*=bottom,
        axis y line*=left,
        legend style={legend pos=north east,inner sep=0pt,outer sep=0pt,legend cell align=left,align=left,draw=none,fill=none,font=\scriptsize},
        ignore legend,
        ]
        \addplot [color=Set2-A, line width=0.5pt] table [y=value] {\resNP};
        \addlegendentry{No Prediction}
        \addplot [color=Set2-B, dashed, line width=1pt] table [y=value] {\resPP};
        \addlegendentry{Primal Only}
        \addplot [color=Set2-C, dotted, line width=1.5pt] table [y=value] {\resPDPone};
        \addlegendentry{Greedy}
        \addplot [color=Set2-D, line width=1pt] table [y=value] {\resPDPtwo};
        \addlegendentry{Affine}
    \end{axis}
\end{tikzpicture}%
        \caption{Relative error to ground-truth}%
        \label{fig:case1:re}%
    \end{subfigure}%
    \caption{Iteration-wise relative objective values and iterate errors in \emph{Baseline} experiment.}
    \label{fig:case1:of:re}
 \end{figure}
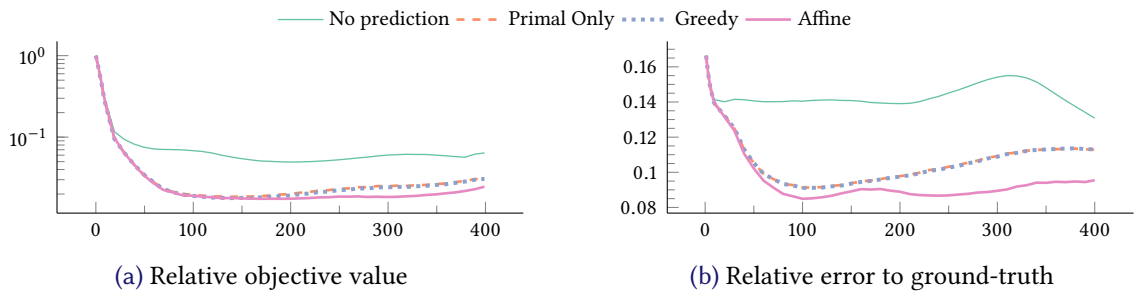

 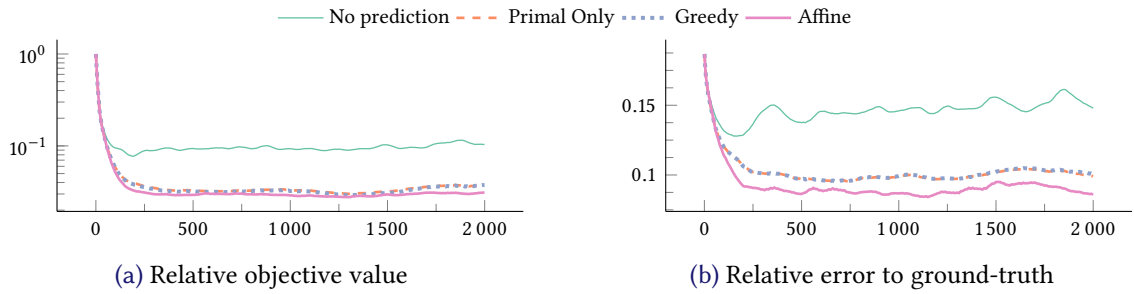
\begin{figure}[t]
    \centering
    \pgfplotslegendfromname{leg:case2}\\
    \begin{subfigure}{0.49\textwidth}%
        \centering%
        \pgfplotstableread{img/case2/NP_s1.0_t0.85.txt}{\resNP}
        \pgfplotstableread{img/case2/PP_s1.0_t0.85.txt}{\resPP}%
        \pgfplotstableread{img/case2/PDP1_s1.0_t0.85.txt}{\resPDPone}%
        \pgfplotstableread{img/case2/PDP2_s1.0_t0.85.txt}{\resPDPtwo}%
        \pgfplotsset{ every axis/.append style = { legend columns = 4, legend to name = leg:case2} }
        \begin{tikzpicture}
    \begin{axis}[%
        width=\linewidth,
        height=0.5\linewidth,
        ymode = log,
        scaled x ticks=false,
        x tick label style={/pgf/number format/fixed, /pgf/number format/set thousands separator={\,}},
        xminorticks=true,
        minor x tick num=1,
        yminorticks=true,
        minor y tick num=3,
        axis x line*=bottom,
        axis y line*=left,
        legend style={legend pos=north east,inner sep=0pt,outer sep=0pt,legend cell align=left,align=left,draw=none,fill=none,font=\scriptsize},
        ]
        \addplot [color=Set2-A, line width=0.5pt] table [x=Time, y=Jn] {\resNP};
        \addlegendentry{No prediction}
        \addplot [color=Set2-B, dashed, line width=1pt] table [x=Time, y=Jn] {\resPP};
        \addlegendentry{Primal Only}
        \addplot [color=Set2-C, dotted, line width=1.5pt] table [x=Time, y=Jn] {\resPDPone};
        \addlegendentry{Greedy}
        \addplot [color=Set2-D, line width=1pt] table [x=Time, y=Jn] {\resPDPtwo};
        \addlegendentry{Affine}
    \end{axis}
\end{tikzpicture}%
        \caption{Relative objective value}%
        \label{fig:case2:of}%
    \end{subfigure}%
    \hfill%
    \begin{subfigure}{0.49\textwidth}%
        \centering%
        \pgfplotstableread{img/case2/NP_s1.0_t0.85_re.txt}{\resNP}
        \pgfplotstableread{img/case2/PP_s1.0_t0.85_re.txt}{\resPP}%
        \pgfplotstableread{img/case2/PDP1_s1.0_t0.85_re.txt}{\resPDPone}%
        \pgfplotstableread{img/case2/PDP2_s1.0_t0.85_re.txt}{\resPDPtwo}%
        \begin{tikzpicture}
    \begin{axis}[%
        width=\linewidth,
        height=0.5\linewidth,
        %ymode = log,
        %ytick = {1, 1e-1, 1e-2},
        scaled x ticks=false,
        x tick label style={/pgf/number format/fixed, /pgf/number format/set thousands separator={\,}},
        y tick label style={/pgf/number format/fixed, /pgf/number format/set thousands separator={\,}},
        xminorticks=true,
        minor x tick num=1,
        yminorticks=true,
        minor y tick num=3,
        axis x line*=bottom,
        axis y line*=left,
        legend style={legend pos=north east,inner sep=0pt,outer sep=0pt,legend cell align=left,align=left,draw=none,fill=none,font=\scriptsize},
        ignore legend,
        ]
        \addplot [color=Set2-A, line width=0.5pt] table [y=value] {\resNP};
        \addlegendentry{No Prediction}
        \addplot [color=Set2-B, dashed, line width=1pt] table [y=value] {\resPP};
        \addlegendentry{Primal Only}
        \addplot [color=Set2-C, dotted, line width=1.5pt] table [y=value] {\resPDPone};
        \addlegendentry{Greedy}
        \addplot [color=Set2-D, line width=1pt] table [y=value] {\resPDPtwo};
        \addlegendentry{Affine}
    \end{axis}
\end{tikzpicture}%
        \caption{Relative error to ground-truth}%
        \label{fig:case2:re}%
    \end{subfigure}%
    \caption{Iteration-wise objective value and relative error in \emph{Circular Motion} experiment.}
    \label{fig:case2:of:re}
\end{figure}

Reconstructions in \cref{fig:reco:case1} confirm what we observed in \cref{fig:case1:of:re} and \cref{tab:casesall}. The incompressible Affine predictor produces slightly sharper reconstructions than other Flow predictors, with No prediction yielding the blurriest results.

\begin{figure}[t]
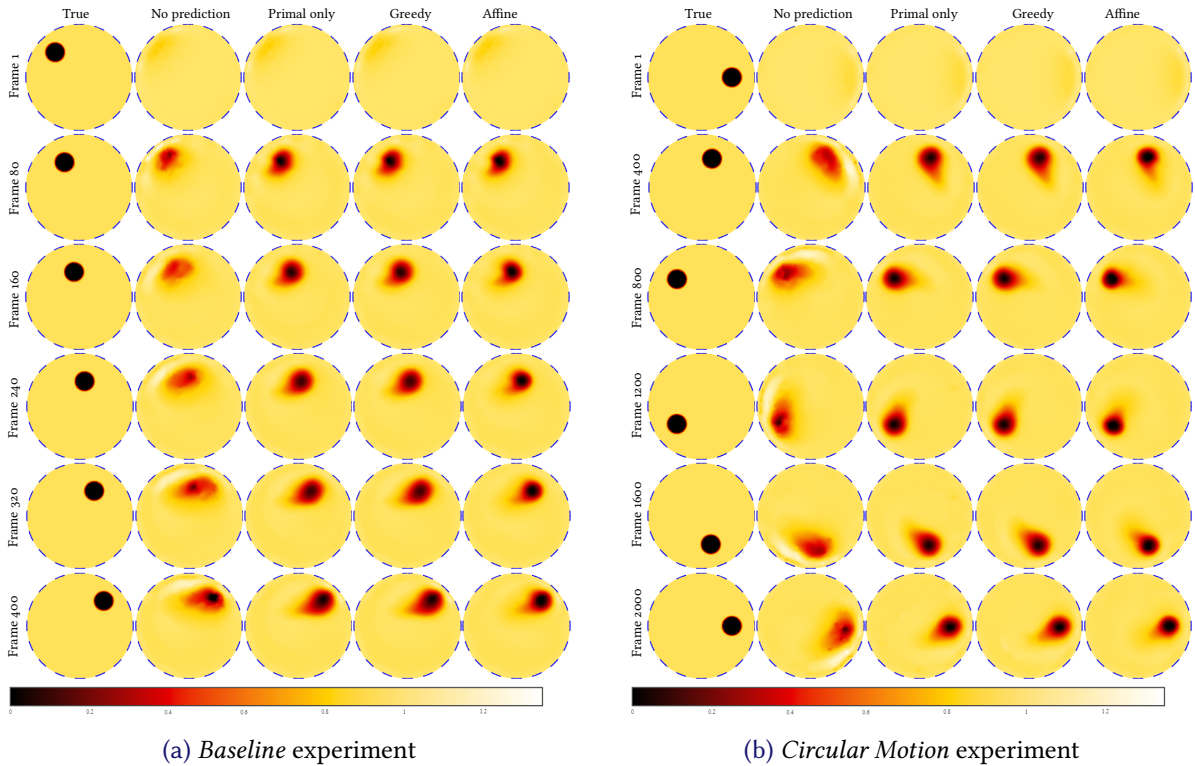

    \centering
    \subcaptionbox{\emph{Baseline} experiment\protect\label{fig:reco:case1}}
        {\centering\resizebox{0.48\textwidth}{!}{\begin{minipage}{\linewidth}{\imagearray{case1}}\end{minipage}}}%
        \hfill
    \subcaptionbox{\emph{Circular Motion} experiment\protect\label{fig:reco:case2}}
        {\centering\resizebox{0.48\textwidth}{!}{\begin{minipage}{\linewidth}{\imagearrayl{case2}}\end{minipage}}}%
    \caption{A selection of reconstructed frames in \emph{Baseline} and \emph{Circular Motion} experiments.}
    \label{fig:case12:reco}
\end{figure}

In the \emph{Circular Motion} experiment, an inclusion moves in a circular trajectory. By \cref{fig:case2:of:re,fig:reco:case2} and \cref{tab:casesall}, \emph{Affine} again provides the sharpest reconstructions and lowest objective values, while \emph{No Prediction} performs the worst.

 \begin{figure}[t]
    \centering
    \includegraphics[width=0.20\linewidth]{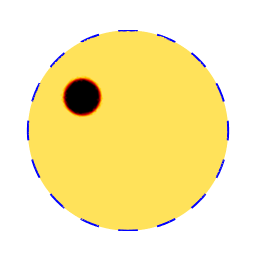}
    \includegraphics[width=0.186\linewidth, trim={0 -0.1 0 0},clip]{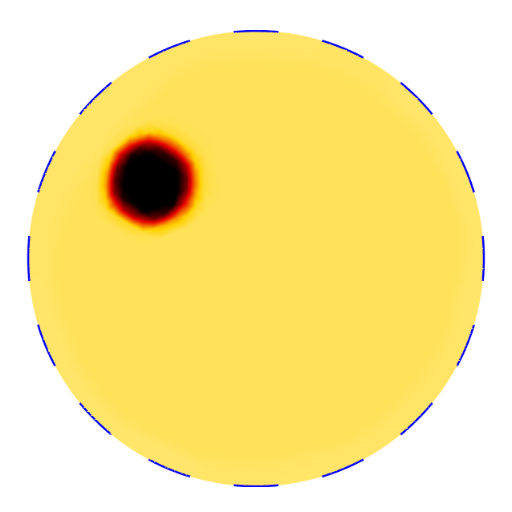}
    \includegraphics[width=0.20\linewidth]{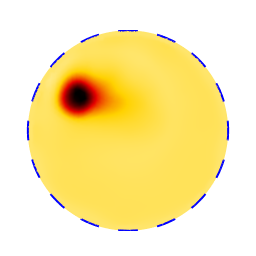}
    \caption{\emph{Circular Motion} experiment. Comparison of the reconstruction quality. Left: true target. Middle: static reconstruction with RIPGN. Right: online reconstruction with \cref{alg:pd:alg} and Affine prediction. }
    \label{fig:case2:reco:comp}
\end{figure}
\Cref{fig:case2:reco:comp} compares the true target, a static solution using RIPGN, and online reconstruction with \emph{Affine} at frame 500. The static reconstruction is sharper but significantly slower, taking $71.9$ seconds, whereas online reconstruction with \cref{alg:pd:alg} captures essential features in just 12.1 milliseconds.

 \begin{figure}[t]
    \centering
    \pgfplotslegendfromname{leg:case3}\\
    \begin{subfigure}{0.49\textwidth}%
        \centering%
        \pgfplotstableread{img/case3/NP_s1.0_t0.85.txt}{\resNP}
        \pgfplotstableread{img/case3/PP_s1.0_t0.85.txt}{\resPP}%
        \pgfplotstableread{img/case3/PDP1_s1.0_t0.85.txt}{\resPDPone}%
        \pgfplotstableread{img/case3/PDP2_s1.0_t0.85.txt}{\resPDPtwo}%
        \pgfplotsset{ every axis/.append style = { legend columns = 4, legend to name = leg:case3} }
        \begin{tikzpicture}
    \begin{axis}[%
        width=\linewidth,
        height=0.5\linewidth,
        ymode = log,
        scaled x ticks=false,
        x tick label style={/pgf/number format/fixed, /pgf/number format/set thousands separator={\,}},
        xminorticks=true,
        minor x tick num=1,
        yminorticks=true,
        minor y tick num=3,
        axis x line*=bottom,
        axis y line*=left,
        legend style={legend pos=north east,inner sep=0pt,outer sep=0pt,legend cell align=left,align=left,draw=none,fill=none,font=\scriptsize},
        ]
        \addplot [color=Set2-A, line width=0.5pt] table [x=Time, y=Jn] {\resNP};
        \addlegendentry{No prediction}
        \addplot [color=Set2-B, dashed, line width=1pt] table [x=Time, y=Jn] {\resPP};
        \addlegendentry{Primal Only}
        \addplot [color=Set2-C, dotted, line width=1.5pt] table [x=Time, y=Jn] {\resPDPone};
        \addlegendentry{Greedy}
        \addplot [color=Set2-D, line width=1pt] table [x=Time, y=Jn] {\resPDPtwo};
        \addlegendentry{Affine}
    \end{axis}
\end{tikzpicture}%
        \caption{Relative objective value}%
        \label{fig:case3:of}%
    \end{subfigure}%
    \hfill%
    \begin{subfigure}{0.49\textwidth}%
        \centering%
        \pgfplotstableread{img/case3/NP_s1.0_t0.85_re.txt}{\resNP}
        \pgfplotstableread{img/case3/PP_s1.0_t0.85_re.txt}{\resPP}%
        \pgfplotstableread{img/case3/PDP1_s1.0_t0.85_re.txt}{\resPDPone}%
        \pgfplotstableread{img/case3/PDP2_s1.0_t0.85_re.txt}{\resPDPtwo}%
        \begin{tikzpicture}
    \begin{axis}[%
        width=\linewidth,
        height=0.5\linewidth,
        %ymode = log,
        %ytick = {1, 1e-1, 1e-2},
        scaled x ticks=false,
        x tick label style={/pgf/number format/fixed, /pgf/number format/set thousands separator={\,}},
        y tick label style={/pgf/number format/fixed, /pgf/number format/set thousands separator={\,}},
        xminorticks=true,
        minor x tick num=1,
        yminorticks=true,
        minor y tick num=3,
        axis x line*=bottom,
        axis y line*=left,
        legend style={legend pos=north east,inner sep=0pt,outer sep=0pt,legend cell align=left,align=left,draw=none,fill=none,font=\scriptsize},
        ignore legend,
        ]
        \addplot [color=Set2-A, line width=0.5pt] table [y=value] {\resNP};
        \addlegendentry{No Prediction}
        \addplot [color=Set2-B, dashed, line width=1pt] table [y=value] {\resPP};
        \addlegendentry{Primal Only}
        \addplot [color=Set2-C, dotted, line width=1.5pt] table [y=value] {\resPDPone};
        \addlegendentry{Greedy}
        \addplot [color=Set2-D, line width=1pt] table [y=value] {\resPDPtwo};
        \addlegendentry{Affine}
    \end{axis}
\end{tikzpicture}%
        \caption{Relative error to ground-truth}%
        \label{fig:case3:re}%
    \end{subfigure}%
    \caption{Iteration-wise objective value and relative error in \emph{Halting Motion} experiment.}
    \label{fig:case3:of:re}
 \end{figure}
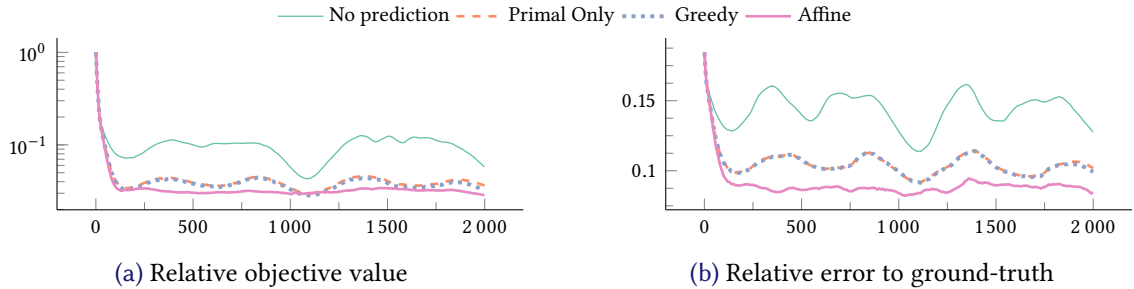

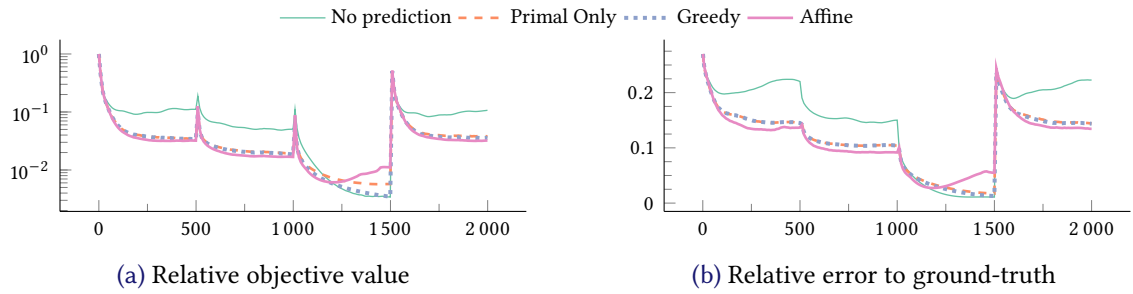
\begin{figure}[t]
    \centering
    \pgfplotslegendfromname{leg:case4}\\
    \begin{subfigure}{0.49\textwidth}%
        \centering%
        \pgfplotstableread{img/case4/NP_s1.0_t0.85.txt}{\resNP}
        \pgfplotstableread{img/case4/PP_s1.0_t0.85.txt}{\resPP}%
        \pgfplotstableread{img/case4/PDP1_s1.0_t0.85.txt}{\resPDPone}%
        \pgfplotstableread{img/case4/PDP2_s1.0_t0.85.txt}{\resPDPtwo}%
        \pgfplotsset{ every axis/.append style = { legend columns = 4, legend to name = leg:case4} }
        \begin{tikzpicture}
    \begin{axis}[%
        width=\linewidth,
        height=0.5\linewidth,
        ymode = log,
        scaled x ticks=false,
        x tick label style={/pgf/number format/fixed, /pgf/number format/set thousands separator={\,}},
        xminorticks=true,
        minor x tick num=1,
        yminorticks=true,
        minor y tick num=3,
        axis x line*=bottom,
        axis y line*=left,
        legend style={legend pos=north east,inner sep=0pt,outer sep=0pt,legend cell align=left,align=left,draw=none,fill=none,font=\scriptsize},
        ]
        \addplot [color=Set2-A, line width=0.5pt] table [x=Time, y=Jn] {\resNP};
        \addlegendentry{No prediction}
        \addplot [color=Set2-B, dashed, line width=1pt] table [x=Time, y=Jn] {\resPP};
        \addlegendentry{Primal Only}
        \addplot [color=Set2-C, dotted, line width=1.5pt] table [x=Time, y=Jn] {\resPDPone};
        \addlegendentry{Greedy}
        \addplot [color=Set2-D, line width=1pt] table [x=Time, y=Jn] {\resPDPtwo};
        \addlegendentry{Affine}
    \end{axis}
\end{tikzpicture}%
        \caption{Relative objective value}%
        \label{fig:case4:of}%
    \end{subfigure}%
    \hfill%
    \begin{subfigure}{0.49\textwidth}%
        \centering%
        \pgfplotstableread{img/case4/NP_s1.0_t0.85_re.txt}{\resNP}
        \pgfplotstableread{img/case4/PP_s1.0_t0.85_re.txt}{\resPP}%
        \pgfplotstableread{img/case4/PDP1_s1.0_t0.85_re.txt}{\resPDPone}%
        \pgfplotstableread{img/case4/PDP2_s1.0_t0.85_re.txt}{\resPDPtwo}%
        \begin{tikzpicture}
    \begin{axis}[%
        width=\linewidth,
        height=0.5\linewidth,
        %ymode = log,
        %ytick = {1, 1e-1, 1e-2},
        scaled x ticks=false,
        x tick label style={/pgf/number format/fixed, /pgf/number format/set thousands separator={\,}},
        y tick label style={/pgf/number format/fixed, /pgf/number format/set thousands separator={\,}},
        xminorticks=true,
        minor x tick num=1,
        yminorticks=true,
        minor y tick num=3,
        axis x line*=bottom,
        axis y line*=left,
        legend style={legend pos=north east,inner sep=0pt,outer sep=0pt,legend cell align=left,align=left,draw=none,fill=none,font=\scriptsize},
        ignore legend,
        ]
        \addplot [color=Set2-A, line width=0.5pt] table [y=value] {\resNP};
        \addlegendentry{No Prediction}
        \addplot [color=Set2-B, dashed, line width=1pt] table [y=value] {\resPP};
        \addlegendentry{Primal Only}
        \addplot [color=Set2-C, dotted, line width=1.5pt] table [y=value] {\resPDPone};
        \addlegendentry{Greedy}
        \addplot [color=Set2-D, line width=1pt] table [y=value] {\resPDPtwo};
        \addlegendentry{Affine}
    \end{axis}
\end{tikzpicture}%
        \caption{Relative error to ground-truth}%
        \label{fig:case4:re}%
    \end{subfigure}%
    \caption{Iteration-wise objective value and relative error for \emph{Disappearing Inclusions}.}
    \label{fig:case4:of:re}
\end{figure}

 \begin{figure}[t]
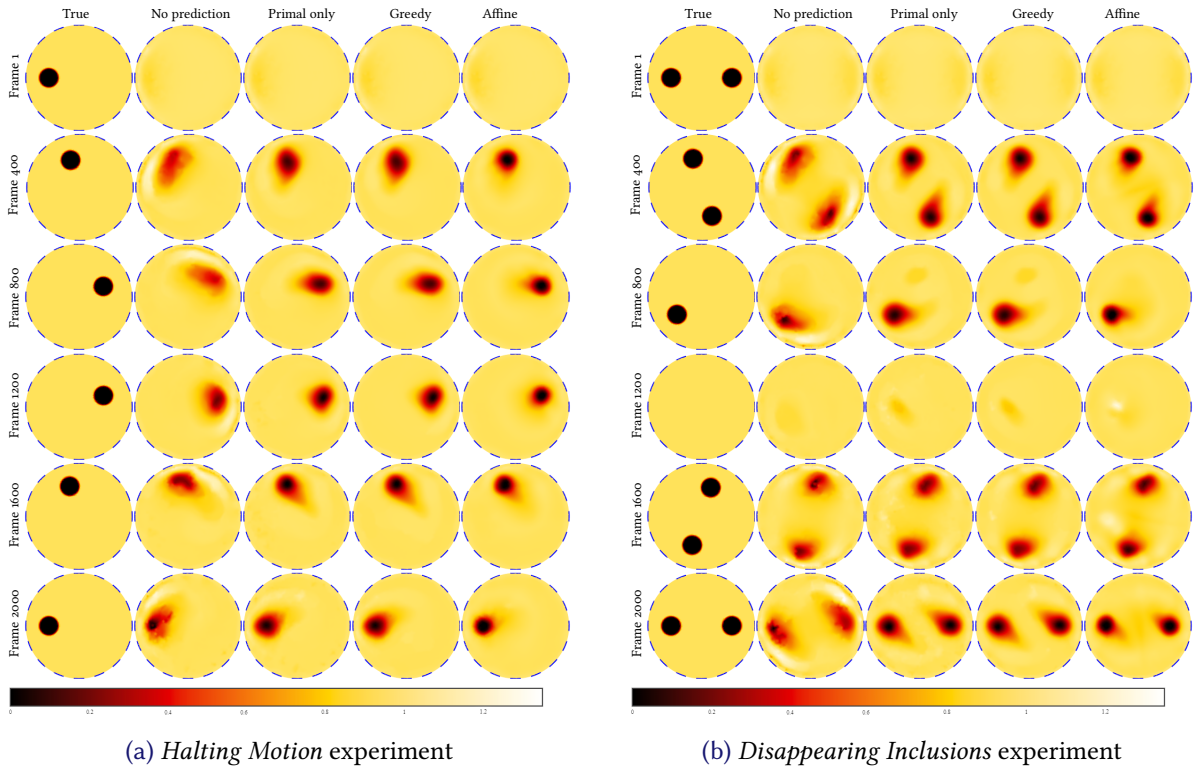

    \centering
    \subcaptionbox{\emph{Halting Motion} experiment\protect\label{fig:reco:case3}}
        {\centering\resizebox{0.48\textwidth}{!}{\begin{minipage}{\linewidth}{\imagearrayl{case3}}\end{minipage}}}%
        \hfill
    \subcaptionbox{\emph{Disappearing Inclusions} experiment\protect\label{fig:reco:case4}}
        {\centering\resizebox{0.48\textwidth}{!}{\begin{minipage}{\linewidth}{\imagearrayl{case4}}\end{minipage}}}%
    \caption{A selection of reconstructed frames in \emph{Halting Motion} and \emph{Disappearing Inclusions} experiments.}
    \label{fig:case34:reco}
\end{figure}

The third experiment involves a halting inclusion. By \cref{fig:case3:of:re,fig:reco:case3} and \cref{tab:casesall}, the reconstruction quality, the relative objective values and iterate errors align with previous experiments. Notably, as objects slow down, especially with \emph{No Prediction}, reconstructions improve. This aligns with expectations, as, aside from the noise, the data is static, where identity prediction is optimal.

The final experiment features two inclusions vanishing at different frames (500 and 1000) and reappearing at frame 1500. \Cref{tab:casesall} and \cref{fig:case4:of:re} show similar results to the previous cases, although by average relative error, the differences between \emph{Affine} and \emph{No Prediction} and \emph{Primal Only} are smaller. However, \cref{fig:case4:of:re} reveals abrupt spikes in the objective value and in the relative error when objects disappear, followed by subsequent decreases as reconstructions exhibit fewer edges, resulting in lower total variation penalties. \emph{No Prediction} dominates when both inclusions disappear since, aside from the noise, the data is completely static. By \cref{fig:reco:case4}, the reconstructions accurately capture the process of inclusions disappearing and reappearing.

\section{Conclusions}

Online optimisation offers a real-time option for solving sequential optimisation problems. While its application in dynamic inverse problems remains relatively rare, we introduced a predictive online primal-dual proximal splitting method tailored for objective functions with non-smooth and non-convex components.

We established a regret bound for this method and we comprehensively evaluated the method in dynamic EIT. Through numerical evaluations using incompressible constant flow-based predictors, we have demonstrated a substantial enhancement in reconstruction quality when compared to uninformed predictions, even in cases where the assumptions of constancy and incompressibility were violated. Remarkably, this improvement was achieved while maintaining minimal computational times, averaging just around 12 milliseconds.

It is worth noting that, in our experiments, the online reconstruction quality, albeit slightly inferior, remained competitive with significantly slower and computationally more costly static reconstructions.
We anticipate further enhancements in the computational speed of our algorithm through optimised numerical implementations, promising even more efficient and effective solutions in the future.

\appendix

\section{The data fitting term and its properties}
\label{app:dataterm}

In this appendix, we prove \cref{cor:dataterm:final} on smoothness inequalities for
\[
    E_k(x) \defeq \frac{1}{2}\norm{S_k(x)-b_k}_{Z_k}^2,
\]
required in the proof of \cref{cor:bt:Ebounds}. We assume:

\begin{assumption}
    \label{ass:bt:difbounds}
    Let $\SolOp_k:X_k\mapsto Z_k$ be twice Fréchet differentiable between the Hilbert spaces $X_k$ and $Z_k$, and suppose $\SolOp_k'(x) \in \linear( X_k;  Z_k)$ and $\SolOp_k''(x) \in \linear( X_k \times X_k;  Z_k)$ satisfy
    \begin{align*}
        \norm{\SolOp_k'(x)(y-x)}_{Z_k} & \le S'_{\max}\norm{y-x}_{X_k}
        \quad\text{and}
        \\
        \norm{\SolOp_k''(x)(y-x,z-x)}_{Z_k} & \le S''_{\max}\norm{y-x}_{X_k}\norm{z-x}_{X_k}
        \quad\text{for all}\quad x,y,z \in B(\optx,{\delta}),
    \end{align*}
\end{assumption}

For brevity, in the proofs, we drop the time index $k$ as it bears no impact on our analysis. We also abbreviate $\SolOp'_{z}(h) \defeq \SolOp'(z)(h)$ and $R(x)=S_k(x)-b_k$.
Then $E(x)=\frac{1}{2}\norm{R(x)}^2$, keeping in mind that $S_k'=R'$ and $S_k''=R''$.
We start with an auxiliary lemma.

\begin{lemma}
    \label{lemma:dataterm:basic}
    Suppose that \cref{ass:bt:difbounds} holds for some $\optx\in {X_k}$ and $\delta > 0$, and that $x,z \in B(\optx,{\delta})$. Then, $c=z + t(x - z)$ and some $t \in [0,1]$,
    \[
        \begin{aligned}
            \iprod{\grad E_k (z)}{z - x}_{X_k}
            &
            = E_k(z) - E_k(x) + \frac{1}{2}\norm{S_k'(z)(x - z)}_{Z_k}^2 - \frac{1}{8}\norm{S_k''(c)(x -z, x -z)}_{Z_k}^2
            \\
            \MoveEqLeft[-1]
            +
            \frac{1}{2}\iprod{S_k''(c)(x -z, x -z)}{S_k(x) - b_k}_{Z_k}.
        \end{aligned}
    \]
\end{lemma}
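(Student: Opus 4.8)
The plan is to reduce everything to a single second-order Taylor expansion of the residual $R = S_k-b_k$ and then expand the square $E=\tfrac12\norm{R}^2$. Throughout I drop the index $k$ and write $d = x-z$, following the conventions set up just before the lemma.

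First I would record the gradient identity that pins down the left-hand side. Since $E(z)=\tfrac12\norm{R(z)}^2$ with $R=S-b$, the chain rule gives $\grad E(z)=S'(z)^*R(z)$, so that
\[
    \iprod{\grad E(z)}{z-x} = \iprod{R(z)}{S'(z)(z-x)} = -\iprod{R(z)}{S'(z)d}.
\]
Everything then reduces to reproducing exactly this quantity from the claimed right-hand side. The key analytic input comes next: because $x,z\in B(\optx,\delta)$ and the ball is convex, the whole segment $\{z+td : t\in[0,1]\}$ lies in $B(\optx,\delta)$, where $S$ is twice Fréchet differentiable by \cref{ass:bt:difbounds}. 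Applying the second-order Taylor theorem to the map $t\mapsto R(z+td)$ therefore furnishes a point $c=z+t(x-z)$ with $t\in[0,1]$ and
\[
    R(x) = R(z) + S'(z)d + \tfrac12 S''(c)(d,d).
\]
I abbreviate the remainder vector $Q\defeq S''(c)(d,d)$, so that $R(x)=R(z)+S'(z)d+\tfrac12 Q$.

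The remaining step is purely algebraic. I would substitute this expansion into $E(x)=\tfrac12\norm{R(z)+S'(z)d+\tfrac12 Q}^2$, expand the norm into its six pairwise inner products, subtract the result from $E(z)=\tfrac12\norm{R(z)}^2$, and then add the three remainder terms appearing on the right of the claim, namely $\tfrac12\norm{S'(z)d}^2$, $-\tfrac18\norm{Q}^2$, and $\tfrac12\iprod{Q}{R(x)}$; in the last of these I once more insert $R(x)=R(z)+S'(z)d+\tfrac12 Q$. A direct comparison shows that all terms involving $\norm{Q}^2$, $\iprod{R(z)}{Q}$, and $\iprod{S'(z)d}{Q}$ cancel in pairs (the $\norm Q^2$ coefficients being $-\tfrac14+\tfrac14$, and each cross term $-\tfrac12+\tfrac12$), leaving precisely $-\iprod{R(z)}{S'(z)d}$, which matches the gradient identity above and closes the argument.

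The one genuine point requiring care — and the step I would set up most deliberately — is the Taylor expansion producing a \emph{single} intermediate point $c$ that simultaneously serves both remainder terms $-\tfrac18\norm{Q}^2$ and $\tfrac12\iprod{Q}{R(x)}$, since it is this shared point that forces the clean cancellation; the subsequent expansion of the square is bookkeeping that must be carried out without sign errors but is otherwise routine. Note that only the $C^2$ regularity of $S$ on the segment is used for the identity itself: the explicit derivative bounds of \cref{ass:bt:difbounds} are not needed here and enter only when this lemma is later combined with the lower bound on $A_k$.
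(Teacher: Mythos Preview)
Your proposal is correct and uses essentially the same ingredients as the paper: the single second-order Taylor expansion of $R=S_k-b_k$ around $z$ with one intermediate point $c$, followed by algebraic manipulation of $E=\tfrac12\norm{R}^2$. The only organisational difference is that the paper derives the identity forward, starting from $E(x)-E(z)=\tfrac12\iprod{R(x)-R(z)}{R(x)+R(z)}$ and unwinding in stages, whereas you verify the claimed identity directly by expanding both sides; the cancellations you describe are exactly those that occur in the paper's intermediate quantities $A$ and $B$.
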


\begin{proof}
    Since $S_k$ and therefore $R$ is twice differentiable, Taylor expansion gives for $c \defeq x + t(z - x)$ and some $t \in [0,1]$ that
    \begin{equation}
        \label{eq:dataterm:basic:0}
        \ResOp(x) = \ResOp(z) + \ResOp'_z(x - z) + \frac{1}{2}\ResOp''_c(x -z, x -z).
    \end{equation}
    Expanding $E(x) - E(z)$ and using \eqref{eq:dataterm:basic:0} and $\iprod{\grad E(z)}{x-z} = \iprod{\ResOp'_z(x-z)}{\ResOp(z)}$ then yields
    \begin{multline}
        \label{eq:dataterm:basic:1}
        E(x) - E(z)
        =
        \frac{1}{2}\left(\norm{\ResOp(x)}^2 - \norm{\ResOp(z)}^2\right) = \frac{1}{2}\iprod{\ResOp(x)-\ResOp(z)}{\ResOp(x)+\ResOp(z)}
        \\
        =
        \frac{1}{2}\iprod{\ResOp(z) + \ResOp'_z(x - z) + \frac{1}{2}\ResOp''_c(x -z, x -z)-\ResOp(z)}{\ResOp(x)+\ResOp(z)}
        =
        \frac{1}{2}\iprod{\grad E (z)}{x - z}
        + A,
    \end{multline}
    where an application of \eqref{eq:dataterm:basic:0} establishes
    \begin{equation}
        \label{eq:dataterm:basic:2}
        \begin{aligned}[t]
            A
            &
            \defeq
            \frac{1}{2}\Bigl(\Bigiprod{\ResOp'_z(x - z) + \frac{1}{2}\ResOp''_c(x -z, x -z)}{\ResOp(x)} + \frac{1}{2}\Bigiprod{\ResOp''_c(x -z, x -z)}{\ResOp(z)}\Bigr)
            \\
            &
            = \frac{1}{2}\Bigiprod{\ResOp'_z(x - z) + \frac{1}{2}\ResOp''_c(x -z, x -z)}{\ResOp(z) + \ResOp'_z(x - z) + \frac{1}{2}\ResOp''_c(x -z, x -z)}
            \\
            \MoveEqLeft[-1]
            + \frac{1}{4}\iprod{\ResOp''_c(x -z, x -z)}{\ResOp(z)}
            = \frac{1}{2}\iprod{\grad E (z)}{x - z} + \frac{1}{2}\norm{\ResOp'_z(x - z)}^2 + B.
        \end{aligned}
    \end{equation}
    Here again, an application of \eqref{eq:dataterm:basic:0}, gives
    \begin{equation}
        \label{eq:dataterm:basic:3}
        \begin{aligned}[t]
            B
            &
            \defeq \frac{1}{8}\norm{\ResOp''_c(x -z, x -z)}^2
            + \frac{1}{2}\iprod{\ResOp''_c(x -z, x -z)}{\ResOp(z) + \ResOp'_z(x - z)}
            \\
            &
            =
            - \frac{1}{8}\norm{\ResOp''_c(x -z, x -z)}^2 + \frac{1}{2}\iprod{\ResOp''_c(x -z, x -z)}{\ResOp(x)}. \\
        \end{aligned}
    \end{equation}
    By combining \cref{eq:dataterm:basic:1,eq:dataterm:basic:2,eq:dataterm:basic:3}, we obtain the claim.
\end{proof}

With the help of \cref{lemma:dataterm:basic}, we next derive a lower bound for $\iprod{\grad E_k(z)}{x - \optx}_{X_k}$.

\begin{lemma}
    \label{lemma:dataterm:norms-only}
    Suppose that \cref{ass:bt:difbounds} holds for some $\optx\in {X_k}$ and $\delta > 0$.
    Then
    $$
        \begin{aligned}
            \iprod{\grad E_k (z)}{x - \optx}_{X_k}  &\ge E_k(x) - E_k(\optx)
            + \frac{1 - \beta}{2}\norm{S_k'(z)(x-\optx)}^2_{Z_k} - \frac{1}{2\beta}\norm{S_k'(z)(x-z)}^2_{Z_k}
            \\[-1ex]
            \MoveEqLeft[-1]
            -
            \frac{A}{2}\norm{x-\optx}^2_{X_k}
            -
            \frac{B}{2}\norm{x-z}^2_{X_k}
        \end{aligned}
    $$
    for all $x,z \in B(\optx,{\delta})$, where for any $\beta > 0$,
    \begin{align}
        \nonumber
        A
        &
        \defeq S''_{\max}\left(
            2\norm{S_k(\optx)-b_k}_{Z_k} + \delta^2S''_{\max}/2
        \right)
        \quad\text{and}\quad
        \\
        \label{eq:dataterm:B}
        B
        &
        \defeq S''_{\max}\left(
            S'_{\max}\delta + 3\norm{S_k(\optx)-b_k}_{Z_k} + \delta^2S''_{\max}/2
        \right).
    \end{align}
\end{lemma}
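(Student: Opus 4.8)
The plan is to reduce everything to two applications of \cref{lemma:dataterm:basic}, followed by one Young's inequality and the crude derivative bounds of \cref{ass:bt:difbounds}. As in the preceding proofs I would drop the index $k$ and write $R(x) = S_k(x) - b_k$, so that $E_k = \tfrac12\norm{R}^2$ and $\grad E_k(z) = S_k'(z)^* R(z)$. The key point is that \cref{lemma:dataterm:basic} evaluates the gradient at its \emph{second} argument, so to produce $\iprod{\grad E_k(z)}{x-\optx}$ I must invoke it twice, each time keeping $z$ in the role of that base point.

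Concretely, I would apply \cref{lemma:dataterm:basic} once with the pair $(\optx,z)$ and once with the pair $(x,z)$ (both admissible, since $x,z,\optx \in B(\optx,\delta)$), obtaining exact expressions for $\iprod{\grad E_k(z)}{z-\optx}$ and $\iprod{\grad E_k(z)}{z-x}$. Subtracting the second from the first and using $\iprod{\grad E_k(z)}{(z-\optx)-(z-x)} = \iprod{\grad E_k(z)}{x-\optx}$ gives the exact identity
\[
    \iprod{\grad E_k(z)}{x-\optx}_{X_k} = E_k(x) - E_k(\optx) + \tfrac12\norm{S_k'(z)(\optx-z)}_{Z_k}^2 - \tfrac12\norm{S_k'(z)(x-z)}_{Z_k}^2 + C,
\]
where $C$ collects the four curvature contributions of the two invocations, namely $-\tfrac18\norm{S_k''(c_1)(\optx-z,\optx-z)}^2 + \tfrac12\iprod{S_k''(c_1)(\optx-z,\optx-z)}{R(\optx)}$ with $c_1 \in [\optx,z]$ and $+\tfrac18\norm{S_k''(c_2)(x-z,x-z)}^2 - \tfrac12\iprod{S_k''(c_2)(x-z,x-z)}{R(x)}$ with $c_2 \in [x,z]$.

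Next I would turn the two first-order norms into the stated growth term. Writing $S_k'(z)(\optx-z) = S_k'(z)(x-z) - S_k'(z)(x-\optx)$ and applying $2\abs{\iprod{a}{b}} \le \beta\norm{a}^2 + \tfrac1\beta\norm{b}^2$ with $a = S_k'(z)(x-\optx)$ and $b = S_k'(z)(x-z)$ yields $\tfrac12\norm{S_k'(z)(\optx-z)}^2 - \tfrac12\norm{S_k'(z)(x-z)}^2 \ge \tfrac{1-\beta}{2}\norm{S_k'(z)(x-\optx)}^2 - \tfrac{1}{2\beta}\norm{S_k'(z)(x-z)}^2$, which is exactly the pair of $S_k'$-terms in the claim.

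It then remains to show $C \ge -\tfrac{A}{2}\norm{x-\optx}^2 - \tfrac{B}{2}\norm{x-z}^2$, and this is the only step needing genuine care. I would use \cref{ass:bt:difbounds} through $\norm{S_k''(c)(u,u)} \le S''_{\max}\norm{u}^2$, together with three elementary facts: the nonnegative term $\tfrac18\norm{S_k''(c_2)(x-z,x-z)}^2$ is simply discarded; since $\optx$ is the \emph{centre} of the ball, $\norm{\optx-z}\le\delta$, hence $\norm{\optx-z}^4 \le \delta^2\norm{\optx-z}^2$; and, by the fundamental theorem of calculus along the segment $[\optx,x]\subset B(\optx,\delta)$, $\norm{R(x)} \le \norm{R(\optx)} + \norm{S_k(x)-S_k(\optx)} \le \norm{S_k(\optx)-b_k} + S'_{\max}\delta$. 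Bounding the residual curvature inner products by $\tfrac12 S''_{\max}\norm{\optx-z}^2\norm{R(\optx)}$ and $\tfrac12 S''_{\max}\norm{x-z}^2\norm{R(x)}$, estimating the surviving $\norm{\optx-z}^2 \le 2\norm{x-\optx}^2 + 2\norm{x-z}^2$, and collecting the coefficients of $\norm{x-\optx}^2$ and $\norm{x-z}^2$ reproduces the constants $A$ and $B$ of \eqref{eq:dataterm:B} exactly. The main obstacle is purely combinatorial bookkeeping: tracking which curvature term feeds which of the two quadratic errors, and in particular noticing that only the first-order estimate of $\norm{R(x)}$ is required, so that no spurious $\tfrac12 S''_{\max}\delta^2$ term inflates $B$.
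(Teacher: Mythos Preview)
Your proposal is correct and follows the same overall architecture as the paper's proof: two applications of \cref{lemma:dataterm:basic} to obtain the exact identity for $\iprod{\grad E_k(z)}{x-\optx}$, followed by the same Young's inequality on $\norm{S_k'(z)(\optx-z)}^2$ to produce the $\tfrac{1-\beta}{2}$ and $-\tfrac{1}{2\beta}$ terms.

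The one place you diverge is in the bookkeeping of the four second-order curvature terms collected in your $C$. The paper expands $S_k''(c_1)(\optx-z,\optx-z)$ via bilinearity (writing $\optx-z=(\optx-x)+(x-z)$ inside the bilinear form) and combines the two inner products $\iprod{S_k''(\cdot)(\cdot,\cdot)}{R(\optx)}-\iprod{S_k''(\cdot)(\cdot,\cdot)}{R(x)}$ before estimating, using $R(x)=R(\optx)+R'_a(x-\optx)$. You instead bound each of the four pieces separately with Cauchy--Schwarz, use $\norm{\optx-z}^4\le\delta^2\norm{\optx-z}^2$ and $\norm{R(x)}\le\norm{R(\optx)}+S'_{\max}\delta$, and only at the very end apply $\norm{\optx-z}^2\le 2\norm{x-\optx}^2+2\norm{x-z}^2$. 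This is more elementary and, as you note, lands on exactly the same constants $A$ and $B$; the paper's bilinear expansion buys no sharper bound here. Your route is arguably cleaner for this lemma.
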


\begin{proof}
    Since $x,z, \optx \in B(\optx,{\delta})$, applying \cref{lemma:dataterm:basic} twice establishes for some  $t_1, t_2\in [0,1]$ and $b \defeq z + t_1(x - z)$ and $c \defeq z + t_2(\optx - z)$ the identity
    \begin{multline}
        \label{eq:dataterm:norms-only:0}
        \iprod{\grad E (z)}{x - \optx}  = \iprod{\grad E (z)}{x - z} + \iprod{\grad E (z)}{z - \optx}
        \\
        = E(x) - E(\optx)
        + \frac{1}{2}\norm{\ResOp'_z(\optx - z)}^2 - \frac{1}{8}\norm{\ResOp''_c(\optx -z, \optx -z)}^2 + \frac{1}{2}\iprod{\ResOp''_c(\optx -z, \optx -z)}{\ResOp(\optx)}
        \\
        - \frac{1}{2}\norm{\ResOp'_z(x - z)}^2 + \frac{1}{8}\norm{\ResOp''_b(x -z, x -z)}^2 - \frac{1}{2}\iprod{\ResOp''_b(x -z, x -z)}{\ResOp(x)}.
    \end{multline}
    Next, we estimate the $\optx - z$ terms with similar $\optx - x$ and $x -z$ terms. First, let us inspect the term $\norm{\ResOp'_z(\optx - z)}^2$. By Young's inequality and the linearity of $\ResOp'$,
    \begin{equation}
        \label{eq:dataterm:norms-only:1}
            \frac{1}{2}\norm{\ResOp'_z(\optx - z)}^2
            =
            \frac{1}{2}\norm{\ResOp'_z(\optx - x)  - \ResOp'_z(z - x)}^2
            \ge \frac{1 - \beta}{2}\norm{\ResOp'_z(\optx - x)}^2 + \frac{1 - \beta^{-1}}{2}\norm{\ResOp'_z(z - x)}^2.
    \end{equation}
    Next, we inspect the term $\norm{\ResOp''_c(\optx -z, \optx -z)}^2$. $\ResOp$ is bilinear and bounded by $R''_{\text{max}} =S''_{\max}$ by \cref{ass:bt:difbounds}, thus Young's inequality, and $z\in B(\optx,{\delta})$ gives
    \begin{equation}
        \label{eq:dataterm:norms-only:2}
        \begin{aligned}[t]
            \norm{\ResOp''_c(\optx -z, \optx -z)}^2
            &
            \le
            2\norm{\ResOp''_c(\optx - x, \optx -z)}^2
            +
            2\norm{\ResOp''_c(x -z, \optx -z)}^2
            \\
            &
            \le
            2(\ResOp''_{\max})^2\norm{\optx - x}^2\delta^2
            +
            2(\ResOp''_{\max})^2\norm{x -z}^2\delta^2.
        \end{aligned}
    \end{equation}
    Finally, let us inspect the term $\iprod{\ResOp''_c(\optx -z, \optx -z)}{\ResOp(\optx)} - \iprod{\ResOp''_b(x -z, x -z)}{\ResOp(x)}$.
    Notice that by mean value theorem
    $
        \ResOp(x) = \ResOp(\optx) + \ResOp'_a(x-\optx)
    $
    for $a = \optx + t_3(x-\optx)$ with some $t_3 \in [0,1]$.
    Using this, bilinearity and symmetricity of $\ResOp''$, boundedness of $\ResOp'$ and $\ResOp''$, Cauchy-Schwartz and Young's inequalities, and $x\in B(\optx,\delta)$ we obtain
    \begin{equation}
        \label{eq:dataterm:norms-only:3}
        \begin{aligned}[t]
            &\iprod{\ResOp''_c(\optx -z, \optx -z)}{\ResOp(\optx)} - \iprod{\ResOp''_b(x -z, x -z)}{\ResOp(x)}
            \\
            &
            =  \iprod{\ResOp''_c(\optx -x, \optx -x)}{\ResOp(\optx)} - \iprod{\ResOp''_b(x -z, x -z)}{\ResOp'_a(x - \optx)}
            \\
            \MoveEqLeft[-1]
            + \iprod{\ResOp''_c(x - z, x - z) -\ResOp''_b(x -z, x -z)}{\ResOp(\optx)}
            + 2\iprod{\ResOp''_c(\optx -x, x - z)}{\ResOp(\optx)}
            \\
            &
            \ge
            -
            S''_{\max}\norm{\ResOp(\optx)}\norm{\optx -x}^2
            -
            S''_{\max}S'_{\max}\delta\norm{x -z}^2
            - 2S''_{\max}\norm{\ResOp(\optx)}\norm{x-z}^2
            \\
            \MoveEqLeft[-1]
            - 2S''_{\max}\norm{\ResOp(\optx)}\norm{\optx -x}\norm{ x - z}
            \\
            &
            \ge
            -
            S''_{\max}
            \left(
                2\norm{\ResOp(\optx)}\norm{\optx -x}^2
                +
                \left(
                3\norm{\ResOp(\optx)}
                +
                S'_{\max}{\delta}
                \right)\norm{x -z}^2
            \right).
        \end{aligned}
    \end{equation}
    Finally, applying the above estimates \cref{eq:dataterm:norms-only:1,eq:dataterm:norms-only:2,eq:dataterm:norms-only:3} to \cref{eq:dataterm:norms-only:0} shows that
    \begin{multline*}
        \iprod{\grad E (z)}{x - \optx} \ge E(x) - E(\optx)
        + \frac{1 - \beta}{2}\norm{\ResOp'_z(\optx - x)}^2 - \frac{1}{2\beta}\norm{\ResOp'_z(z - x)}^2
        -
        S''_{\max}\norm{\ResOp(\optx)}\norm{\optx -x}^2
        \\
        - \frac{S''_{\max}(S'_{\max}{\delta} + 3\norm{\ResOp(\optx)})}{2}\norm{x-z}^2
        -
        \frac{\delta^2(\ResOp''_{\max})^2}{4}\left(
            \norm{\optx - x}^2 + \norm{x -z}^2
        \right).
        \qedhere
    \end{multline*}
\end{proof}

The next corollary transfers $S_k'(z)$ to $S_k'(\optx)$ in one of the terms.

\begin{corollary}
    \label{cor:dataterm:initial}
    Suppose \cref{ass:bt:difbounds} holds for some $\optx\in {X_k}$ and $\delta > 0$.
    Then
    $$
        \begin{aligned}
            \iprod{\grad E_k (z)}{x - \optx}_{X_k}  &\ge E_k(x) - E_k(\optx)
            + \frac{(1 - \beta)^2}{2}\norm{S_k'(\optx)(x - \optx)}^2_{Z_k}
             - \frac{1}{2\beta}\norm{S_k'(z)(x-z)}^2_{Z_k}
            \\[-1ex]
            \MoveEqLeft[-1]
            -
            \frac{C}{2}\norm{x - \optx}^2_{X_k}
            -
            \frac{B}{2}\norm{x-z}^2_{X_k},
        \end{aligned}
    $$
    for all $x,z \in B(\optx,{\delta})$, where, for any $1 >\beta > 0$, $B$ is given in \eqref{eq:dataterm:B}, and
    \begin{equation}
        \label{eq:dataterm:C}
        C
        \defeq
        A+ S''_{\max}\inv\beta(\beta-1)^2\delta^2
        =
        S''_{\max}
        \left(
        2\norm{S_k(\optx) - b_k}_{Z_k}
        +
        \delta^2S''_{\max}/2
        +
        \inv\beta(\beta-1)^2\delta^2
        \right).
    \end{equation}
\end{corollary}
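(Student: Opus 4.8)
The plan is to start from the lower bound already established in \cref{lemma:dataterm:norms-only} and to modify only its single positive term $\tfrac{1-\beta}{2}\norm{S_k'(z)(x-\optx)}_{Z_k}^2$, in which the Jacobian is evaluated at the iterate $z$ rather than at the comparison point $\optx$. Every other term of that bound---namely $E_k(x)-E_k(\optx)$, the penalty $-\tfrac{1}{2\beta}\norm{S_k'(z)(x-z)}_{Z_k}^2$, and $-\tfrac{B}{2}\norm{x-z}_{X_k}^2$---is already in the desired form and carries over verbatim, so the entire argument reduces to a perturbation estimate on this one term.

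First I would write $S_k'(z)(x-\optx) = S_k'(\optx)(x-\optx) + \bigl(S_k'(z)-S_k'(\optx)\bigr)(x-\optx)$ and apply Young's inequality with the same parameter $\beta$: for $p,q \in Z_k$ one has $\norm{p+q}_{Z_k}^2 \ge (1-\beta)\norm{p}_{Z_k}^2 + (1-\inv\beta)\norm{q}_{Z_k}^2$, and since $0<\beta<1$ the last coefficient equals $-\tfrac{1-\beta}{\beta}$. Taking $p=S_k'(\optx)(x-\optx)$ and $q=\bigl(S_k'(z)-S_k'(\optx)\bigr)(x-\optx)$, and then multiplying through by the positive factor $\tfrac{1-\beta}{2}$, produces exactly the target term $\tfrac{(1-\beta)^2}{2}\norm{S_k'(\optx)(x-\optx)}_{Z_k}^2$ together with a negative remainder proportional to $\norm{q}_{Z_k}^2$.

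It then remains to control $q$. The key observation is that $S_k'$ is Lipschitz in its base point: since $S_k$ is twice Fréchet differentiable with $S_k''$ bilinear and bounded by $S''_{\max}$ (as provided by \cref{ass:bt:difbounds}, in the same spirit as \cref{eq:eit:lipschitz-first}), the mean-value representation $S_k'(z)-S_k'(\optx)=\int_0^1 S_k''(\optx+t(z-\optx))(z-\optx,\freevar)\dif t$ yields $\norm{q}_{Z_k}\le S''_{\max}\norm{z-\optx}_{X_k}\norm{x-\optx}_{X_k}\le S''_{\max}\delta\norm{x-\optx}_{X_k}$, using $z\in B(\optx,\delta)$. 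Squaring this and folding the resulting multiple of $\norm{x-\optx}_{X_k}^2$ into the existing coefficient $A$ of \cref{lemma:dataterm:norms-only} delivers the constant $C$ of \eqref{eq:dataterm:C}, which completes the claim. The only points needing care---neither of them deep---are the sign bookkeeping in Young's inequality (the coefficient $1-\inv\beta$ is negative, so the estimate becomes a genuine lower bound only after multiplication by $\tfrac{1-\beta}{2}>0$) and justifying the base-point Lipschitz bound for $S_k'$ from the pointwise bilinear bound on $S_k''$; these are where I would concentrate the rigour, since the rest is bookkeeping of the unchanged terms.
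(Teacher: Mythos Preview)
Your proposal is correct and follows essentially the same route as the paper's proof: both start from \cref{lemma:dataterm:norms-only}, split $S_k'(z)(x-\optx)$ as $S_k'(\optx)(x-\optx)$ plus a base-point perturbation, apply Young's inequality with the same parameter $\beta$, and bound the perturbation via the second derivative and $z\in B(\optx,\delta)$. The only cosmetic difference is that the paper invokes a mean-value form $R''_a(\optx-x,z-\optx)$ for the perturbation while you write it as an integral $\int_0^1 S_k''(\optx+t(z-\optx))(z-\optx,\freevar)\dif t$; these are equivalent and lead to the same estimate.
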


\begin{proof}
    With fixed $h$, $\ResOp'_{z}(h)$ is continuously differentiable by \cref{ass:bt:difbounds}.
    Thus, by the main value theorem, for $a=\optx+ t_1(x - \optx)$ with some $t_1 \in [0,1]$, we have
    \begin{align}
        \label{eq:dataterm:initial:0}
        \ResOp'_z(\optx - x)
        &
        = \ResOp'(z)(\optx - x) = \ResOp'(\optx)(\optx - x) + \ResOp''(a)(\optx - x,z - \optx)
        \\
        \nonumber
        &
        = \ResOp'_\optx(\optx - x) + \ResOp''_a(\optx - x,z - \optx).
    \end{align}
    Since $\beta > 0$, \eqref{eq:dataterm:initial:0}, $z\in B(\optx,\delta)$, Young's inequality and the boundedness of $\ResOp''$ (due to boundedness of $\SolOp''$ in \cref{ass:bt:difbounds}) show that
    \begin{equation*}
        \begin{aligned}[t]
            \norm{\ResOp_z(\optx - x)}^2
            &
            = \norm{\ResOp'_{\optx}(\optx - x)}^2 + \norm{\ResOp''_a(\optx - x,z - \optx) }^2 + 2\iprod{\ResOp'_{\optx}(z - \optx)}{\ResOp''_a(\optx - x,z - \optx) }
            \\
            &
            \ge (1 - \beta)\norm{\ResOp'_{\optx}(\optx - x)}^2 - (\beta^{-1} - 1)\norm{\ResOp''_a(\optx - x,z - \optx) }^2
            \\
            &
            \ge (1 - \beta)\norm{\ResOp'_{\optx}(\optx - x)}^2 - (\beta^{-1} - 1)S''_{\max}\delta^2\norm{\optx - x}^2.
        \end{aligned}
    \end{equation*}
    Since $(1-\beta)(\beta^{-1} - 1) = (\beta-1)^2/\beta$, using this and \cref{lemma:dataterm:norms-only} yields the claim.
\end{proof}

Finally, we state conditions that guarantee the growth estimates of \cref{ass:pd:main-global,ass:pd:main-local} for $E$.
Essentially, $S'(\optx)^*S'(\optx)$ has to be sufficiently elliptic.

\begin{corollary}
    \label{cor:dataterm:final}
    Suppose that \cref{ass:bt:difbounds} holds for some $\optx\in {X_k}$ and $\delta > 0$.
    Let $x,z \in B(\optx,{\delta})$, and suppose some $1 > \beta > 0$ and some $\epsilon \in \R$ that
    \begin{gather}
        \label{eq:dataterm:final:cond:i}
        \norm{S_k'(\optx)(x - \optx)}_{Z_k}^2 \ge
        \frac{
            C
            +
            2\epsilon
        }{(1-\beta)^2}
        \norm{x - \optx}_{X_k}^2
    \intertext{for $C$ defined in \eqref{eq:dataterm:C}. Then, for $B$ defined in \eqref{eq:dataterm:B} and $D \defeq \frac{1}{2}B + \frac{1}{2\beta} (S'_{\max})^2$,}
        \label{eq:dataterm:final:claim:i}
        \iprod{\grad E_k(z)}{x - \optx}_{X_k}  \ge E_k(x) - E_k(\optx) + \epsilon \norm{x - \optx}_{X_k}^2 - D\norm{x-z}_{X_k}^2
    \end{gather}

    Additionally, if for some $\tilde \epsilon \in \R$ also
    \begin{gather}
        \label{eq:dataterm:final:cond:ii}
        \norm{S'_k(\optx)(x - \optx)}_{Z_k}^2
        \ge
        2\left(\tilde\epsilon + S''_{\max}\left(\Bigl(\frac{\delta}{8} + S'_{\max}\Bigr)\delta + \norm{S_k(\optx) - b_k}_{Z_k}\right)\right)\norm{x -\optx}_{X_k}^2,
    \shortintertext{then}
        \label{eq:dataterm:final:claim:ii}
        \iprod{\grad E (z) - \grad E (\optx)}{x - \optx}_{X_k}  \ge  (\epsilon + \tilde \epsilon) \norm{x - \optx}^2_{X_k} - D\norm{x-z}^2_{X_k}.
    \end{gather}
\end{corollary}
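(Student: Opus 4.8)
The plan is to read both claims off the machinery already assembled in \cref{cor:dataterm:initial,lemma:dataterm:basic}, treating \eqref{eq:dataterm:final:claim:i} as an essentially algebraic substitution and \eqref{eq:dataterm:final:claim:ii} as a second-order remainder estimate layered on top of it.

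For the first claim \eqref{eq:dataterm:final:claim:i}, I would start directly from \cref{cor:dataterm:initial}, which already supplies
\[
    \iprod{\grad E_k(z)}{x-\optx}_{X_k} \ge E_k(x) - E_k(\optx) + \frac{(1-\beta)^2}{2}\norm{S_k'(\optx)(x-\optx)}_{Z_k}^2 - \frac{1}{2\beta}\norm{S_k'(z)(x-z)}_{Z_k}^2 - \frac{C}{2}\norm{x-\optx}_{X_k}^2 - \frac{B}{2}\norm{x-z}_{X_k}^2.
\]
The hypothesis \eqref{eq:dataterm:final:cond:i} is tailored so that $\tfrac{(1-\beta)^2}{2}\norm{S_k'(\optx)(x-\optx)}^2 \ge \tfrac{C}{2}\norm{x-\optx}^2 + \epsilon\norm{x-\optx}^2$, which cancels the $-\tfrac{C}{2}\norm{x-\optx}^2$ term and produces the advertised $+\epsilon\norm{x-\optx}^2$. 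For the remaining $-\tfrac{1}{2\beta}\norm{S_k'(z)(x-z)}^2$ I would invoke the first-derivative bound of \cref{ass:bt:difbounds} (valid since $x,z\in B(\optx,\delta)$) to get $\norm{S_k'(z)(x-z)}^2 \le (S'_{\max})^2\norm{x-z}^2$, and fold it together with $-\tfrac{B}{2}\norm{x-z}^2$ into $-D\norm{x-z}^2$ for $D = \tfrac12 B + \tfrac{1}{2\beta}(S'_{\max})^2$. This gives \eqref{eq:dataterm:final:claim:i} with no further work.

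For the second claim \eqref{eq:dataterm:final:claim:ii}, I would split $\iprod{\grad E_k(z) - \grad E_k(\optx)}{x-\optx} = \iprod{\grad E_k(z)}{x-\optx} - \iprod{\grad E_k(\optx)}{x-\optx}$ and substitute the just-proved \eqref{eq:dataterm:final:claim:i} for the first term. What remains is the scalar second-order remainder $E_k(x) - E_k(\optx) - \iprod{\grad E_k(\optx)}{x-\optx}$, added to the $\epsilon\norm{x-\optx}^2 - D\norm{x-z}^2$ already in hand; it then suffices to show this remainder is at least $\tilde\epsilon\norm{x-\optx}^2$. I would obtain an exact expression for it by applying \cref{lemma:dataterm:basic} with its role of ``$z$'' played by $\optx$ (legitimate since $\optx\in B(\optx,\delta)$ trivially), yielding, for some $c$ on the segment $[\optx,x]$,
\[
    E_k(x) - E_k(\optx) - \iprod{\grad E_k(\optx)}{x-\optx} = \frac12\norm{S_k'(\optx)(x-\optx)}^2 - \frac18\norm{S_k''(c)(x-\optx,x-\optx)}^2 + \frac12\iprod{S_k''(c)(x-\optx,x-\optx)}{S_k(x)-b_k}.
\]
The quadratic term is the favourable one, and \eqref{eq:dataterm:final:cond:ii} is designed precisely so that, after subtracting the bounds on the two second-derivative terms, a clean $\tilde\epsilon\norm{x-\optx}^2$ survives; combined with the inequality from the first part this gives the claimed $(\epsilon+\tilde\epsilon)\norm{x-\optx}^2 - D\norm{x-z}^2$.

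The main obstacle is the constant bookkeeping in controlling the two second-derivative terms of that remainder. I would bound $\norm{S_k''(c)(x-\optx,x-\optx)} \le S''_{\max}\norm{x-\optx}^2$ from \cref{ass:bt:difbounds}, use $\norm{x-\optx}\le\delta$ to trade a factor of $\norm{x-\optx}$ for $\delta$, and estimate $\norm{S_k(x)-b_k} \le \norm{S_k(\optx)-b_k} + S'_{\max}\delta$ via the mean value theorem and the first-derivative bound. Matching the resulting coefficient against $S''_{\max}\bigl((\tfrac{\delta}{8}+S'_{\max})\delta + \norm{S_k(\optx)-b_k}\bigr)$ in \eqref{eq:dataterm:final:cond:ii} is the only delicate step; everything else is a direct appeal to the preceding lemmas.
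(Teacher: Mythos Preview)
Your proposal is correct and follows essentially the same route as the paper: for \eqref{eq:dataterm:final:claim:i} you insert \eqref{eq:dataterm:final:cond:i} into \cref{cor:dataterm:initial} and absorb the $\tfrac{1}{2\beta}\norm{S_k'(z)(x-z)}^2$ term via the $S'_{\max}$ bound exactly as the paper does; for \eqref{eq:dataterm:final:claim:ii} you subtract $\iprod{\grad E_k(\optx)}{x-\optx}$, invoke \cref{lemma:dataterm:basic} at $z=\optx$, and bound the second-order remainder with the mean-value estimate $\norm{S_k(x)-b_k}\le \norm{S_k(\optx)-b_k}+S'_{\max}\delta$, which is precisely the paper's argument.
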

\begin{proof}
    The boundedness of $\ResOp'$, \cref{cor:dataterm:initial}, and \eqref{eq:dataterm:final:cond:i} yield
    \[
        \begin{aligned}
            E(\optx) - E(x) + \iprod{\grad E (z)}{x - \optx}
            &
            \ge
            \frac{(1-\beta)^2}{2}\norm{\ResOp_{\optx}(x - \optx)}^2
            - \frac{1}{2\beta}\norm{\ResOp'(z)(x-z)}^2
            \\
            \MoveEqLeft[-1]
            -
            \frac{C}{2}
            \norm{x-\optx}^2
            -
            \frac{B}{2}
            \norm{x-z}^2
            \\
            &
            \ge
            \epsilon\norm{x - \optx}^2
            - \frac{1}{2\beta}(S'_{\max})^2\norm{x-z}^2
            - \Bigl(D-\frac{1}{2\beta}(S'_{\max})^2\Bigr)\norm{x-z}^2
            \\
            &
            = \epsilon\norm{x - \optx}^2 - D\norm{x-z}^2.
        \end{aligned}
    \]
    This gives \eqref{eq:dataterm:final:claim:i}.
    Subtracting $\iprod{\grad E (\optx)}{x - \optx}$ from both sides, we further obtain
    \begin{equation}
        \label{eq:dataterm:final:0}
        \iprod{\grad E (z) - \grad E (\optx)}{x - \optx}
        \ge
        E(x) - E(\optx) + \epsilon \norm{\optx - x}^2 - D\norm{x-z}^2
        -
        \iprod{\grad E (\optx)}{x - \optx}.
    \end{equation}
    Recall that by the mean value theorem
    $
        \ResOp(x) = \ResOp(\optx) + \ResOp'_a(x-\optx)
    $.
    Thus
    \[
        \norm{\ResOp(x)} = \norm{\ResOp(x) - \ResOp(\optx) + \ResOp(\optx)}
        \le
        \norm{\ResOp'_a(x-\optx)} + \norm{\ResOp(\optx)}
        \le
        S'_{\max}\norm{x-\optx} + \norm{\ResOp(\optx)}.
    \]
    This, \cref{lemma:dataterm:basic} with $z=\optx$,
    $x\in B(\optx,\delta)$, and \eqref{eq:dataterm:final:cond:ii} give
    \begin{multline*}
        E(x)-E(\optx)-\iprod{\grad E (\optx)}{x - \optx}
        = \frac{1}{2}\norm{\ResOp'_\optx(x - \optx)}^2 - \frac{1}{8}\norm{\ResOp''_a(x -\optx, x -\optx)}^2
        + \iprod{\ResOp''_a(x -\optx, x -\optx)}{\ResOp(x)}
        \\
        \ge
        \frac{1}{2}\norm{\ResOp'_\optx(x - \optx)}^2 - \frac{S''_{\max}}{8}\norm{x -\optx}^4 - S''_{\max}\norm{\ResOp(x)}\norm{ x -\optx}^2
        \\
        \ge
        \frac{1}{2}\norm{\ResOp'_\optx(x - \optx)}^2 - S''_{\max}\left(\frac{\delta^2}{8} + S'_{\max}\delta + \norm{\ResOp(\optx)}\right)\norm{x -\optx}^2
        \ge
        \tilde \epsilon\norm{x - \optx}^2.
    \end{multline*}
    Plugging this into \eqref{eq:dataterm:final:0} yields \eqref{eq:dataterm:final:claim:ii}
\end{proof}

\section{Implementation details}
\label{app:numerics}

We now elaborate on the background gradient approximation scheme and the step length parameters choices used in the numerical experiments.

\subsection{Background solution of PDEs}
\label{app:numerics:pde}

Formally, the gradient of the approximation scheme of \cref{sssec:bgprocessing} takes
\[
    \estgrad(\thisx) \defeq \grad S_j(x^j)^*\left(S_j(x^j) - b_k\right),
\]
for some past time index $0 \le j \le k$.
As we keep the excitation potentials fixed at every frame, $U^k = U^j$ for any $j,k \ge 0$, hence $I^k(x) = I^j(x)$ and $\nabla \tilde S_k(x) = \Sigma^{-1/2} \grad \tilde I^k(x) = \Sigma^{-1/2}  \grad I^j(x) = \nabla \tilde S_j(x)$ for all $x$, meaning that $S_k = S_j$.

\begin{lemma}\label{lemma:gradient:error}
    Suppose $0 < x_m < x_M < \infty$, $\PpredictConstr_{k}$ is finite dimensional, and that $\Omega \subset \R^d$ is a Lipschitz domain. Further, define $E_k$ by \cref{eq:eit:Ek}, and suppose that $S_k = S_j$ and $Y_k = Y_j$ for any $k,j \in  \N$, and that $E_k$, $x^k$, and $\bar x^k$ satisfy the assumptions of \cref{cor:bt:Ebounds}.
    Then $E_k(x)$ and $\estgrad(\thisx)$ satisfy \cref{ass:pd:main-local} with error terms
    \[
        \bar e_k = e_k = \hat e_k = (S'_{\max})^2(S_{\max} + \norm{b_k}_{Y_k})\delta\norm{x^j - \thisx}_{X_k}.
    \]
\end{lemma}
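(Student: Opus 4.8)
The plan is to read \cref{lemma:gradient:error} as an inexact-gradient perturbation of \cref{cor:bt:Ebounds}. The crucial preliminary observation is that, because the excitation potentials are held fixed across frames, the standing hypothesis $\SolOp_k = \SolOp_j$ yields $\grad \SolOp_k = \grad \SolOp_j$, so the stored estimate
\[
    \estgrad(\thisx) = \grad \SolOp_j(x^j)^*\bigl(\SolOp_j(x^j) - b_k\bigr) = \grad \SolOp_k(x^j)^*\bigl(\SolOp_k(x^j) - b_k\bigr) = \grad E_k(x^j)
\]
is nothing but the \emph{exact} gradient of $E_k$, merely anchored at the stale linearisation point $x^j$ instead of at the prediction point $\this\primalpredict$ demanded by \cref{ass:pd:main-local}. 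The lemma thus reduces to quantifying the effect of this displacement of the base point.

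First I would invoke \cref{cor:bt:Ebounds}, whose hypotheses are assumed, to obtain each of the three inequalities of \cref{ass:pd:main-local}\,\ref{item:pd:main-forwardstep-local0} and \ref{item:pd:main-forwardstep-local} for the exact gradient $\grad E_k$ with vanishing error. In every one of these the estimate enters as $\estgrad_k(\this\primalpredict)$, i.e.\ in place of the exact $\grad E_k(\this\primalpredict)$ at the \emph{same} base point throughout, so for the relevant comparison point $c$ (either $\this\optx$ or $\this\realoptx$) I would split
\[
    \iprod{\estgrad(\thisx)}{x - c} = \iprod{\grad E_k(\this\primalpredict)}{x - c} + \iprod{\grad E_k(x^j) - \grad E_k(\this\primalpredict)}{x - c}.
\]
The first term is supplied verbatim by \cref{cor:bt:Ebounds}; the second is the error, which by Cauchy--Schwarz and the ball constraint $\norm{x - c}_{X_k} \le \delta$ inherited from \cref{cor:bt:Ebounds} is bounded below by $-\delta\,\norm{\grad E_k(x^j) - \grad E_k(\this\primalpredict)}_{X_k}$.

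The core estimate is then a Lipschitz bound for $x \mapsto \grad E_k(x) = \SolOp_k'(x)^*(\SolOp_k(x) - b_k)$ over the $\delta$-ball. Splitting
\[
    \grad E_k(x^j) - \grad E_k(\this\primalpredict) = \SolOp_k'(x^j)^*\bigl(\SolOp_k(x^j) - \SolOp_k(\this\primalpredict)\bigr) + \bigl(\SolOp_k'(x^j) - \SolOp_k'(\this\primalpredict)\bigr)^*\bigl(\SolOp_k(\this\primalpredict) - b_k\bigr),
\]
I would control the first summand by $(S'_{\max})^2\norm{x^j - \this\primalpredict}_{X_k}$ and the second by $S''_{\max}(S_{\max} + \norm{b_k})\norm{x^j - \this\primalpredict}_{X_k}$, using the first- and second-order norm bounds \eqref{eq:dSkbounds} (equivalently \cref{ass:bt:difbounds}) and the uniform residual bound $\norm{\SolOp_k(\this\primalpredict) - b_k} \le S_{\max} + \norm{b_k}$ valid on the ball. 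Consolidating this composite modulus and relating the drift $\norm{x^j - \this\primalpredict}$ of the frozen point from the prediction to the scheme's staleness $\norm{x^j - \thisx}_{X_k}$ then produces the claimed error simultaneously for all three inequalities, whence $\bar e_k = e_k = \hat e_k$.

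I expect the main obstacle to be exactly this last consolidation: one must fuse the Jacobian-Lipschitz (second-order) bound with the residual bound uniformly over the ball, and then identify the perturbation distance with the single measurable staleness quantity $\norm{x^j - \thisx}_{X_k}$ rather than with the auxiliary distance to $\this\primalpredict$. Matching the constant to the stated $(S'_{\max})^2(S_{\max} + \norm{b_k})$ likewise requires keeping careful track of how the $\delta$ from the ball constraint, the derivative bounds, and the residual bound combine, since the raw Lipschitz modulus appears as the sum $(S'_{\max})^2 + S''_{\max}(S_{\max} + \norm{b_k})$ before simplification.
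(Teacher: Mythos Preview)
Your approach is essentially the paper's: identify $\estgrad(\thisx)=\grad E_k(x^j)$, split off the exact-gradient term handled by \cref{cor:bt:Ebounds}, and bound the remainder $\iprod{\grad E_k(x^j)-\grad E_k(\this\primalpredict)}{x-c}$ via second-order smoothness and $\norm{x-c}\le\delta$. The only cosmetic difference is that you decompose $\grad E_k(x^j)-\grad E_k(\this\primalpredict)$ directly into two pieces, whereas the paper applies the mean value theorem to the inner product and expands the Hessian $\grad E_k'(z)$; both routes yield the identical bound $\bigl[(S'_{\max})^2 + S''_{\max}(S_{\max}+\norm{b_k})\bigr]\delta\cdot(\text{staleness})$. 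Your flagged ``obstacles'' are real: the constant $(S'_{\max})^2(S_{\max}+\norm{b_k})$ in the statement is a typo for this sum (the paper's own proof produces the sum), and the paper, like you, silently identifies the drift to $\this\primalpredict$ with $\norm{x^j-\thisx}$.
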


\begin{proof}
    Since $S_k$ is differentiable by \cref{lemma:eit:frechet-first}, so is $E_k$.
    Let $\check x = x^j$ for some $j \le k$. Since $S_k = S_j$ and $Y_k = Y_k$,
    \[
        \begin{aligned}
        \iprod{\nabla E_j(x^j)}{\thisx - \optx^k}_{X_j}
        &=
        \iprod{S_j'(x^j)(\thisx - \optx^k)}{S_j(x^j) - b_k}_{Y_j}
        \\
        &
        = \iprod{S_k'(x^j)(\thisx - \optx^k)}{S_k(x^j) - b_k}_{Y_k}
        = \iprod{\nabla E_k(x^j)}{\thisx - \optx^k}_{X_k}.
        \end{aligned}
    \]
    Thus for a $z \in \bar B(\thisx,\norm{x^j - \thisx})$, by the mean value theorem,
    \[
        \iprod{\grad E_k(x^j)}{\thisx - \optx^k}_{X_k}
        =
        \iprod{\grad E_k(\this\primalpredict)}{\thisx - \optx^k}_{X_k}
        + \iprod{\grad E_k'(z)(x^j - \thisx)}{\thisx - \optx^k}_{X_k}.
    \]
    Since
    $\thisx \in B(\this \optx,\delta)$,
    \[
        \begin{aligned}
            \iprod{\grad E_k'(z)(x^j - \thisx)}{\thisx - \optx^k}_{X_k}
            &
            = \iprod{S_k''(z)(\thisx - \optx^k)(x^j - \thisx)}{S_k(z) - b_k}_{Y_k}
            \\
            \MoveEqLeft[-1]
            + \iprod{S_k'(z)(\thisx - \optx^k)}{S_k'(z)(x^j - \thisx)}_{Y_k}
            \\
            \MoveEqLeft[11]
            \le
            S''_{\max}(S_{\max} + \norm{b_k}_{Y_k})\norm{\thisx - \optx^k}_{X_k} \norm{x^j - \thisx}_{X_k}
            +
            (S'_{\max})^2\norm{\thisx - \optx^k}_{X_k} \norm{x^j - \thisx}_{X_k}
            \\
            \MoveEqLeft[11]
            \le
            S''_{\max}(S_{\max} + \norm{b_k}_{Y_k})\delta \norm{x^j - \thisx}_{X_k}
            +
            (S'_{\max})^2\delta\norm{x^j - \thisx}_{X_k}
            =: \Err_k
        \end{aligned}
    \]
    Now given that $S_k$ satisfies the assumptions of \cref{cor:bt:Ebounds}, we have
    \begin{multline*}
        E_k(\thisx) - E_k(\optx^k) + \theta \norm{\thisx - \optx^k}_X^2 - D\norm{\thisx-\this\primalpredict}_X^2
        \le
        \iprod{\grad E_k(\this\primalpredict)}{\thisx - \optx^k}_{X_k}
        \\
        =
        \iprod{\grad E_k(\this\primalpredict)}{\thisx - \optx^k}_{X_k}
        + \iprod{\grad E_k'(z)(x^j - \thisx)}{\thisx - \optx^k}_{X_k}
        \le
        \iprod{\grad E_k(\this\primalpredict)}{\thisx - \optx^k}_{X_k} + \Err_k.
    \end{multline*}
    Thus \cref{ass:pd:main-local} holds as claimed.
\end{proof}

The error term $\Err_k$ (and likewise $\bar \Err_k$ and $\hat \Err_k$) depends on the measurements $b_k$ and on the linearisation lag through the term ${\thisx - \optx^k}$. The exact values of $S'_{\max}$ and $S''_{\max}$ are given in \cref{ssec:tpi} and it is easily verified from \eqref{eq:eit:boundedness} and \eqref{eq:eit:Ek} that $S_{\max} = N_2 \norm{\WOp}_2 \max_{j,k} (C_2\norm{U^{j,k}}_{2} + \norm{\mathcal{I}^{j,k}}_2)$.

\subsection{Step length parameter choice}
\label{sup:step:parameters}

The reasoning behind the choice of $\tau$ and $\sigma$ is as follows: We set $\kappa_k \equiv \kappa \defeq 0.15$ and assume that $\frac{1}{2}\EkLoss$, $\EkLossGlobal$, and $\EkLossMono$ are all bounded by $L_{\grad E}$, where $L_{\grad E}$ satisfies $L_{\grad E} \leq \frac{1}{2} \lVert\WOp\grad I(\check x)(\WOp\grad I(\check x))^*\rVert$. This norm is computationally efficient, with a manageable size of $240 \times 240$. Under these assumptions, the first term in \eqref{eq:pd:primaltestcond-positivity-local} becomes
\[
    0.85 >
    \max \left\{
            \EkLoss,
            -2(\gamma_k + \EkGrowthGlobal),
            \frac{2}{1-\EkLipCoeffCoeff}\EkLossGlobal,
            \frac{2}{1-\EkLipCoeffCoeff}\EkLossMono
    \right\}
\]
From numerical experiments, we determined that $\norm{K_k} \approx 16 \cdot 10^{-6}$ and that, in all cases, $2L_{\grad E} \le 10^{4}$. Consequently, the choice of $\tau$ and $\sigma$ ensures that $0.85 \cdot 10^4 \cdot 16 \cdot 10^{-6} < 0.15$, satisfying the condition in \eqref{eq:pd:primaltestcond-positivity-local}.

The approximation of $\EkLoss$, $\EkLossGlobal$, and $\EkLossMono$ is heuristic, derived from the convex static case where all these terms would equal $L_{\grad E}$. Numerically, we observed that the algorithm typically diverged when $\kappa < 0.15$, indicating the reasonability of the approximation.


\begin{thebibliography}{10}

\bibitem{albers2019ensemble}
D.\,J{.\nobreak\kern 0.33333em}Albers, P.\,A{.\nobreak\kern
  0.33333em}Blancquart, M.\,E{.\nobreak\kern 0.33333em}Levine,
  E.\,E{.\nobreak\kern 0.33333em}Seylabi, and A{.\nobreak\kern
  0.33333em}Stuart, Ensemble {K}alman methods with constraints, \emph{Inverse
  Problems} 35 (2019),  095007,
  \href{https://dx.doi.org/10.1088/1361-6420/ab1c09}{\nolinkurl{doi:10.1088/1361-6420/ab1c09}}.

\bibitem{alsaker2023multithreaded}
M{.\nobreak\kern 0.33333em}Alsaker, J.\,L{.\nobreak\kern 0.33333em}Mueller, and
  A{.\nobreak\kern 0.33333em}Stahel, A multithreaded real-time solution for 2D
  EIT reconstruction with the D-bar algorithm, \emph{Journal of Computational
  Science} 67 (2023),  101967.

\bibitem{bastianello2020primal}
N{.\nobreak\kern 0.33333em}Bastianello, A{.\nobreak\kern 0.33333em}Simonetto,
  and R{.\nobreak\kern 0.33333em}Carli, Primal and dual prediction-correction
  methods for time-varying convex optimization  (2020),
  \href{https://arxiv.org/abs/2004.11709}{\nolinkurl{arXiv:2004.11709}}.

\bibitem{beauchemin1995computation}
S.\,S{.\nobreak\kern 0.33333em}Beauchemin and J.\,L{.\nobreak\kern
  0.33333em}Barron, The computation of optical flow, \emph{ACM computing
  surveys (CSUR)} 27 (1995),  433--466.

\bibitem{belmega2018online}
E.\,V{.\nobreak\kern 0.33333em}Belmega, P{.\nobreak\kern
  0.33333em}Mertikopoulos, R{.\nobreak\kern 0.33333em}Negrel, and
  L{.\nobreak\kern 0.33333em}Sanguinetti, Online convex optimization and
  no-regret learning: Algorithms, guarantees and applications, 2018,
  \href{https://arxiv.org/abs/804.04529}{\nolinkurl{arXiv:804.04529}}.

\bibitem{tuomov-phaserec}
M{.\nobreak\kern 0.33333em}Benning, L{.\nobreak\kern 0.33333em}Gladden,
  D{.\nobreak\kern 0.33333em}Holland, C.\,B{.\nobreak\kern
  0.33333em}Sch{\"o}nlieb, and T{.\nobreak\kern 0.33333em}Valkonen, Phase
  reconstruction from velocity-encoded {MRI} measurements -- {A} survey of
  sparsity-promoting variational approaches, \emph{Journal of Magnetic
  Resonance} 238 (2014),  26--43,
  \href{https://dx.doi.org/10.1016/j.jmr.2013.10.003}{\nolinkurl{doi:10.1016/j.jmr.2013.10.003}}.

\bibitem{bernstein2019online}
A{.\nobreak\kern 0.33333em}Bernstein, E{.\nobreak\kern 0.33333em}Dall'Anese,
  and A{.\nobreak\kern 0.33333em}Simonetto, Online primal-dual methods with
  measurement feedback for time-varying convex optimization, \emph{IEEE
  Transactions on Signal Processing} 67 (2019),  1978--1991.

\bibitem{chang2020unconstrained}
T.\,J{.\nobreak\kern 0.33333em}Chang and S{.\nobreak\kern
  0.33333em}Shahrampour, On Online Optimization: Dynamic Regret Analysis of
  Strongly Convex and Smooth Problems, \emph{Proceedings of the AAAI Conference
  on Artificial Intelligence} 35 (2021),  6966--6973,
  \href{https://dx.doi.org/10.1609/aaai.v35i8.16858}{\nolinkurl{doi:10.1609/aaai.v35i8.16858}}.

\bibitem{chen2022online}
X{.\nobreak\kern 0.33333em}Chen, B{.\nobreak\kern 0.33333em}Tang,
  J{.\nobreak\kern 0.33333em}Fan, and X{.\nobreak\kern 0.33333em}Guo, Online
  gradient descent algorithms for functional data learning, \emph{Journal of
  Complexity} 70 (2022),  101635.

\bibitem{cheng1989electrode}
K.\,S{.\nobreak\kern 0.33333em}Cheng, D{.\nobreak\kern 0.33333em}Isaacson,
  J{.\nobreak\kern 0.33333em}Newell, and D.\,G{.\nobreak\kern 0.33333em}Gisser,
  Electrode models for electric current computed tomography, \emph{IEEE
  Transactions on Biomedical Engineering} 36 (1989),  918--924.

\bibitem{tuomov-nlpdhgm-redo}
C{.\nobreak\kern 0.33333em}Clason, S{.\nobreak\kern 0.33333em}Mazurenko, and
  T{.\nobreak\kern 0.33333em}Valkonen, Acceleration and global convergence of a
  first-order primal-dual method for nonconvex problems, \emph{SIAM Journal on
  Optimization} 29 (2019),  933--963,
  \href{https://dx.doi.org/10.1137/18M1170194}{\nolinkurl{doi:10.1137/18m1170194}},
  \href{https://arxiv.org/abs/1802.03347}{\nolinkurl{arXiv:1802.03347}}.

\bibitem{clasonvalkonen2020nonsmooth}
C{.\nobreak\kern 0.33333em}Clason and T{.\nobreak\kern 0.33333em}Valkonen,
  Introduction to Nonsmooth Analysis and Optimization, 2024,
  \href{https://arxiv.org/abs/2001.00216}{\nolinkurl{arXiv:2001.00216}}.
\newblock Submitted.

\bibitem{darde2022contact}
J{.\nobreak\kern 0.33333em}Dard\'e, N{.\nobreak\kern 0.33333em}Hyv\"onen,
  T{.\nobreak\kern 0.33333em}Kuutela, and T{.\nobreak\kern 0.33333em}Valkonen,
  Contact Adapting Electrode Model for Electrical Impedance Tomography,
  \emph{SIAM Journal on Applied Mathematics} 82 (2022),  427--449.

\bibitem{better-predict}
N{.\nobreak\kern 0.33333em}Dizon, J{.\nobreak\kern 0.33333em}Jauhiainen, and
  T{.\nobreak\kern 0.33333em}Valkonen, Prediction techniques for dynamic
  imaging with online primal-dual methods, \emph{Journal of Mathematical
  Imaging and Vision}  (2024),
  \href{https://dx.doi.org/10.1007/s10851-024-01214-w}{\nolinkurl{doi:10.1007/s10851-024-01214-w}},
  \href{https://arxiv.org/abs/2405.02497}{\nolinkurl{arXiv:2405.02497}}.

\bibitem{dizonvalkonen2024tracking}
N{.\nobreak\kern 0.33333em}Dizon and T{.\nobreak\kern 0.33333em}Valkonen,
  Differential estimates for fast first-order multilevel nonconvex
  optimisation, 2024,
  \href{https://arxiv.org/abs/2412.01481}{\nolinkurl{arXiv:2412.01481}}.
\newblock submitted.

\bibitem{gerber2021online}
M{.\nobreak\kern 0.33333em}Gerber and K{.\nobreak\kern 0.33333em}Heine, Online
  inference with multi-modal likelihood functions, \emph{The Annals of
  Statistics} 49 (2021),
  \href{https://dx.doi.org/10.1214/21-aos2076}{\nolinkurl{doi:10.1214/21-aos2076}}.

\bibitem{hall13dynamical}
E{.\nobreak\kern 0.33333em}Hall and R{.\nobreak\kern 0.33333em}Willett,
  Dynamical models and tracking regret in online convex programming, in
  \emph{Proceedings of the 30th International Conference on Machine Learning},
  S{.\nobreak\kern 0.33333em}Dasgupta and D{.\nobreak\kern 0.33333em}McAllester
  (eds.), volume~28 of Proceedings of Machine Learning Research, PMLR, Atlanta,
  Georgia, USA, 2013,  579--587,
  \url{http://proceedings.mlr.press/v28/hall13.html}.

\bibitem{hazan2016introduction}
E{.\nobreak\kern 0.33333em}Hazan et~al., Introduction to online convex
  optimization, \emph{Foundations and Trends in Optimization} 2 (2016),
  157--325.

\bibitem{holland2010reducing}
D.\,J{.\nobreak\kern 0.33333em}Holland, D.\,M{.\nobreak\kern
  0.33333em}Malioutov, A{.\nobreak\kern 0.33333em}Blake, A.\,J{.\nobreak\kern
  0.33333em}Sederman, and L.\,F{.\nobreak\kern 0.33333em}Gladden, Reducing data
  acquisition times in phase-encoded velocity imaging u sing compressed
  sensing, \emph{Journal of Magnetic Resonance} 203 (2010),  236--46.

\bibitem{horn1981determining}
B.\,K{.\nobreak\kern 0.33333em}Horn and B.\,G{.\nobreak\kern 0.33333em}Schunck,
  Determining optical flow, \emph{Artificial intelligence} 17 (1981),
  185--203.

\bibitem{hunt2014weighing}
A{.\nobreak\kern 0.33333em}Hunt, Weighing without touching: applying electrical
  capacitance tomography to mass flowrate measurement in multiphase flows,
  \emph{Measurement and Control} 47 (2014),  19--25,
  \href{https://dx.doi.org/10.1177/0020294013517445}{\nolinkurl{doi:10.1177/0020294013517445}}.

\bibitem{jauhiainen2025online-eit-codes}
J{.\nobreak\kern 0.33333em}Jauhiainen, N{.\nobreak\kern 0.33333em}Dizon, and
  T{.\nobreak\kern 0.33333em}Valkonen, Codes for ``Online optimization for
  dynamic electrical impedance tomography'', 2025,
  \href{https://dx.doi.org/10.5281/zenodo.15028468}{\nolinkurl{doi:10.5281/zenodo.15028468}}.
\newblock Software.

\bibitem{jauhiainen2020ripgn}
J{.\nobreak\kern 0.33333em}Jauhiainen, P{.\nobreak\kern 0.33333em}Kuusela,
  A{.\nobreak\kern 0.33333em}Sepp{\"a}nen, and T{.\nobreak\kern
  0.33333em}Valkonen, Relaxed Gauss--Newton Methods with Applications to
  Electrical Impedance Tomography, \emph{SIAM Journal on Imaging Sciences} 13
  (2020),  1415--1445.

\bibitem{jauhiainen2020non}
J{.\nobreak\kern 0.33333em}Jauhiainen, M{.\nobreak\kern 0.33333em}Pour-Ghaz,
  T{.\nobreak\kern 0.33333em}Valkonen, and A{.\nobreak\kern
  0.33333em}Sepp{\"a}nen, Nonplanar sensing skins for structural health
  monitoring based on electrical resistance tomography, \emph{Computer-Aided
  Civil and Infrastructure Engineering}  (2020).

\bibitem{jauhiainen2021nonplanar}
J{.\nobreak\kern 0.33333em}Jauhiainen, M{.\nobreak\kern 0.33333em}Pour-Ghaz,
  T{.\nobreak\kern 0.33333em}Valkonen, and A{.\nobreak\kern
  0.33333em}Sepp{\"a}nen, Nonplanar sensing skins for structural health
  monitoring based on electrical resistance tomography, \emph{Computer-Aided
  Civil and Infrastructure Engineering} 36 (2021),  1488--1507.

\bibitem{jauhiainen2022mumford}
J{.\nobreak\kern 0.33333em}Jauhiainen, A{.\nobreak\kern 0.33333em}Seppanen, and
  T{.\nobreak\kern 0.33333em}Valkonen, Mumford-Shah regularization in
  electrical impedance tomography with complete electrode model, \emph{Inverse
  Problems}  (2022).

\bibitem{jensen2022nonsmooth}
B{.\nobreak\kern 0.33333em}Jensen and T{.\nobreak\kern 0.33333em}Valkonen, A
  nonsmooth primal-dual method with interwoven {PDE} constraint solver,
  \emph{Computational Optimization and Applications} 89 (2024),  115--149,
  \href{https://dx.doi.org/10.1007/s10589-024-00587-3}{\nolinkurl{doi:10.1007/s10589-024-00587-3}},
  \href{https://arxiv.org/abs/2211.04807}{\nolinkurl{arXiv:2211.04807}}.

\bibitem{jin2010analysis}
B{.\nobreak\kern 0.33333em}Jin and P{.\nobreak\kern 0.33333em}Maass, \emph{An
  analysis of electrical impedance tomography with applications to Tikhonov
  regularization}, volume~18, 2012,
  \href{https://dx.doi.org/10.1051/cocv/2011193}{\nolinkurl{doi:10.1051/cocv/2011193}}.

\bibitem{kaipio2000statistical}
J.\,P{.\nobreak\kern 0.33333em}Kaipio, V{.\nobreak\kern 0.33333em}Kolehmainen,
  E{.\nobreak\kern 0.33333em}Somersalo, and M{.\nobreak\kern
  0.33333em}Vauhkonen, Statistical inversion and Monte Carlo sampling methods
  in electrical impedance tomography, \emph{Inverse problems} 16 (2000),  1487.

\bibitem{lan2023bayesian}
S{.\nobreak\kern 0.33333em}Lan, S{.\nobreak\kern 0.33333em}Li, and
  M{.\nobreak\kern 0.33333em}Pasha, Bayesian spatiotemporal modeling for
  inverse problems, \emph{Statistics and Computing} 33 (2023),
  \href{https://dx.doi.org/10.1007/s11222-023-10253-z}{\nolinkurl{doi:10.1007/s11222-023-10253-z}}.

\bibitem{lechleiter2018dynamic}
A{.\nobreak\kern 0.33333em}Lechleiter, Dynamic inverse problems:
  modelling—regularization—numerics, \emph{Inverse Problems} 34 (2018),
  4pp.

\bibitem{lipponen2011nonstationary}
A{.\nobreak\kern 0.33333em}Lipponen, A{.\nobreak\kern 0.33333em}Seppänen, and
  J.\,P{.\nobreak\kern 0.33333em}Kaipio, Nonstationary approximation error
  approach to imaging of three-dimensional pipe flow: experimental evaluation,
  \emph{Measurement Science and Technology} 22 (2011),  104013,
  \href{https://dx.doi.org/10.1088/0957-0233/22/10/104013}{\nolinkurl{doi:10.1088/0957-0233/22/10/104013}}.

\bibitem{nachman1996global}
A.\,I{.\nobreak\kern 0.33333em}Nachman, Global uniqueness for a two-dimensional
  inverse boundary value problem, \emph{Annals of Mathematics}  (1996),
  71--96.

\bibitem{nonhoff2020online}
M{.\nobreak\kern 0.33333em}Nonhoff and M.\,A{.\nobreak\kern
  0.33333em}M{\"u}ller, Online gradient descent for linear dynamical systems,
  \emph{IFAC-PapersOnLine} 53 (2020),  945--952.

\bibitem{orabona2020modern}
F{.\nobreak\kern 0.33333em}Orabona, A Modern Introduction to Online Learning,
  2020, \href{https://arxiv.org/abs/1912.13213}{\nolinkurl{arXiv:1912.13213}}.

\bibitem{siltanen2000implementation}
S{.\nobreak\kern 0.33333em}Siltanen, J{.\nobreak\kern 0.33333em}Mueller, and
  D{.\nobreak\kern 0.33333em}Isaacson, An implementation of the reconstruction
  algorithm of A Nachman for the 2D inverse conductivity problem, \emph{Inverse
  Problems} 16 (2000),  681.

\bibitem{simonetto2020time}
A{.\nobreak\kern 0.33333em}Simonetto, E{.\nobreak\kern 0.33333em}Dall'Anese,
  S{.\nobreak\kern 0.33333em}Paternain, G{.\nobreak\kern 0.33333em}Leus, and
  G.\,B{.\nobreak\kern 0.33333em}Giannakis, Time-varying convex optimization:
  Time-structured algorithms and applications, \emph{Proceedings of the IEEE}
  108 (2020),  2032--2048.

\bibitem{somersalo1992existence}
E{.\nobreak\kern 0.33333em}Somersalo, M{.\nobreak\kern 0.33333em}Cheney, and
  D{.\nobreak\kern 0.33333em}Isaacson, Existence and uniqueness for electrode
  models for electric current computed tomography, \emph{SIAM Journal on
  Applied Mathematics} 52 (1992),  1023--1040.

\bibitem{tang2022running}
Y{.\nobreak\kern 0.33333em}Tang, E{.\nobreak\kern 0.33333em}Dall'Anese,
  A{.\nobreak\kern 0.33333em}Bernstein, and S{.\nobreak\kern 0.33333em}Low,
  Running primal-dual gradient method for time-varying nonconvex problems,
  \emph{SIAM Journal on Control and Optimization} 60 (2022),  1970--1990.

\bibitem{tuomov-nlpdhgm}
T{.\nobreak\kern 0.33333em}Valkonen, A primal-dual hybrid gradient method for
  non-linear operators with applications to {MRI}, \emph{Inverse Problems} 30
  (2014),  055012,
  \href{https://dx.doi.org/10.1088/0266-5611/30/5/055012}{\nolinkurl{doi:10.1088/0266-5611/30/5/055012}},
  \href{https://arxiv.org/abs/1309.5032}{\nolinkurl{arXiv:1309.5032}}.

\bibitem{valkonen2020testing}
T{.\nobreak\kern 0.33333em}Valkonen, Testing and non-linear preconditioning of
  the proximal point method, \emph{Applied Mathematics and Optimization} 82
  (2020),
  \href{https://dx.doi.org/10.1007/s00245-018-9541-6}{\nolinkurl{doi:10.1007/s00245-018-9541-6}},
  \href{https://arxiv.org/abs/1703.05705}{\nolinkurl{arXiv:1703.05705}}.

\bibitem{tuomov-predict}
T{.\nobreak\kern 0.33333em}Valkonen, Predictive online optimisation with
  applications to optical flow, \emph{Journal of Mathematical Imaging and
  Vision} 63 (2021),  329--355,
  \href{https://dx.doi.org/10.1007/s10851-020-01000-4}{\nolinkurl{doi:10.1007/s10851-020-01000-4}},
  \href{https://arxiv.org/abs/2002.03053}{\nolinkurl{arXiv:2002.03053}}.

\bibitem{voss2018imaging}
A{.\nobreak\kern 0.33333em}Voss, N{.\nobreak\kern 0.33333em}H{\"a}nninen,
  M{.\nobreak\kern 0.33333em}Pour-Ghaz, M{.\nobreak\kern 0.33333em}Vauhkonen,
  and A{.\nobreak\kern 0.33333em}Sepp{\"a}nen, Imaging of two-dimensional
  unsaturated moisture flows in uncracked and cracked cement-based materials
  using electrical capacitance tomography, \emph{Materials and Structures} 51
  (2018),  1--10.

\bibitem{voss2019three}
A{.\nobreak\kern 0.33333em}Voss, P{.\nobreak\kern 0.33333em}Hosseini,
  M{.\nobreak\kern 0.33333em}Pour-Ghaz, M{.\nobreak\kern 0.33333em}Vauhkonen,
  and A{.\nobreak\kern 0.33333em}Sepp{\"a}nen, Three-dimensional electrical
  capacitance tomography--A tool for characterizing moisture transport
  properties of cement-based materials, \emph{Materials \& Design} 181 (2019),
  107967.

\bibitem{zhang2022regrets}
L{.\nobreak\kern 0.33333em}Zhang, H{.\nobreak\kern 0.33333em}Liu, and
  X{.\nobreak\kern 0.33333em}Xiao, Regrets of proximal method of multipliers
  for online non-convex optimization with long term constraints, \emph{Journal
  of Global Optimization} 85 (2022),  61--80,
  \href{https://dx.doi.org/10.1007/s10898-022-01196-2}{\nolinkurl{doi:10.1007/s10898-022-01196-2}}.

\bibitem{zhang2021online}
Y{.\nobreak\kern 0.33333em}Zhang, E{.\nobreak\kern 0.33333em}Dall’Anese, and
  M{.\nobreak\kern 0.33333em}Hong, Online proximal-admm for time-varying
  constrained convex optimization, \emph{IEEE Transactions on Signal and
  Information Processing over Networks} 7 (2021),  144--155.

\bibitem{zhang2019distributed}
Y{.\nobreak\kern 0.33333em}Zhang, R.\,J{.\nobreak\kern 0.33333em}Ravier,
  V{.\nobreak\kern 0.33333em}Tarokh, and M.\,M{.\nobreak\kern
  0.33333em}Zavlanos, Distributed Online Convex Optimization with Improved
  Dynamic Regret, 2019,
  \href{https://arxiv.org/abs/1911.05127}{\nolinkurl{arXiv:1911.05127}}.

\bibitem{zinkevich2003online}
M{.\nobreak\kern 0.33333em}Zinkevich, Online convex programming and generalized
  infinitesimal gradient ascent, in \emph{Proceedings of the 20th International
  Conference on Machine Learning (ICML-03)}, AAAI, 2003,  928--936.

\end{thebibliography}
\end{document}